\def\L{\mathrm{L}}
\def\H{\mathrm{H}}
\numberwithin{theorem}{section}
\newcommand{\TheTitle}{``Finite element approximation for the delayed generalized Burgers-Huxley equation with weakly singular kernel: Part II Non-Conforming and DG approximation"} 
\newcommand{\TheShortTitle}{FEM for GBHE with Memory} 
\newcommand{\TheAuthors} {Sumit Mahajan and  Arbaz Khan }
\headers{\TheShortTitle}{\TheAuthors}
\title{{\TheTitle}%
\thanks{ 
\funding{
``S. Mahajan would like to thank Ministry of Education, Government of India, for financial support (Prime Minister Research Fellowship PMRF ID : 2801816) to carry out his research work."
}}}
\author{
Sumit Mahahjan\thanks{ ``Department of Mathematics,  Indian Institute of Technology Roorkee (IITR), 
Roorkee-247667, India 
    (\email{sumit\_m@ma.iitr.ac.in})",}
   \and
   Arbaz Khan\thanks{``Corresponding author: 
 Department of Mathematics,  Indian Institute of Technology Roorkee (IITR), 
Roorkee-247667, India (\email{arbaz@ma.iitr.ac.in})".}}
\begin{document}

\maketitle

\begin{abstract}
In this paper, the numerical approximation of the generalized Burgers'-Huxley equation (GBHE) with weakly singular kernels using non-conforming methods will be presented. Specifically, we discuss two new formulations. The first formulation is based on the non-conforming finite element method (NCFEM). The other formulation is based on discontinuous Galerkin finite element methods  (DGFEM).
The wellposedness results for both formulations are proved.
  Then, a priori error estimates for both the semi-discrete and fully-discrete schemes are derived.
  Specific numerical examples, including some applications for the GBHE with weakly singular model, are discussed to validate the theoretical results.
\end{abstract}

\begin{keywords}
	A priori analysis, Burgers' equation, weakly singular kernel, convection-diffusion reaction problem, Caputo derivative, Crouzeix-Raviart element, Discontinuous Galerkin method. 
\end{keywords}

\begin{AMS}
65N15,   	
65N30   	
35K55 
\end{AMS}

\section{Introduction}
\label{intro}
Non-linear partial differential equations (PDEs) find numerous applications in the various fields of physics, biology, mechanics, and dynamics. As of now, solving these equations remains highly challenging, and finding solutions, whether through analytical or numerical approaches, is a complex task. The model's complexity and non-linearity pose difficulties in achieving accurate and reliable solutions. To make these complex models solvable, we frequently need to introduce different assumptions like simplifying the equations, ignoring certain factors, or estimating the solution. Although these simplifications can make the problem easier to handle, but this becomes problematic when we apply the solution to real-world problems, where accuracy and reliability are of utmost importance. One such exemplar model is the 
GBHE, which explains the interplay between convection effects, diffusion transport, and reaction mechanisms.
Our model problem is as follows: Find $u\in \Omega\times [0,T]$, such that 
\begin{align}\label{2.GBHE}
	\nonumber\mathcal{L} u(x,t) &=f(x,t),  \ (x,t)\in\Omega\times(0,T),\\ \quad u(x,t)&=0, \ (x,t) \in \partial\Omega \times (0,T),\quad 
	u(x,0)=u_0(x), \ x\in\Omega, 
\end{align}
where the domain $\Omega\subset \mathbb{R}^d (d = 2,3)$ is an open bounded simply connected convex domain and the boundary $\partial\Omega$ is  Lipschitz. $f(\cdot,\cdot)$ represents the given external forcing and the differential operator is defined as
\begin{align*}
	\nonumber\mathcal{L} u &=  \frac{\partial u}{\partial t}-\nu\Delta u+\alpha u^{\delta}\sum_{i=1}^d\frac{\partial u}{\partial x_i}- \beta u(1-u^{\delta})(u^{\delta}-\gamma)-\eta\int_{0}^{t} K(s-\tau)\Delta u(\tau) \mathrm{~d}\tau.
\end{align*}
The delayed effect of the GBHE is studied by the memory term where $\eta \geq 0$ signifies the relaxation time and $K(\cdot)$ denotes the weakly singular kernel. The parameters $\alpha>0,\delta\ge1,\beta >0$, $\gamma\in(0,1)$ and $\nu$ represent 
the advection coefficient, 
 the retardation time, 
 the reaction coefficient, the constant and the diffusion coefficient, respectively.  
 For the different choices of the parameters, the above model can be reduced to Burgers equation~\cite{JMB}, which has various applications in fluid dynamics, traffic flow, etc., or the Huxley equation~\cite {WXY}, which describes nerve pulse propagation in nerve fibres and wall motion in liquid crystals, or Fitz-Hugh-Nagumo~\cite{FHR} equation which is a reaction-diffusion equation utilized in both circuit theory and biology to describe dynamic processes~\cite{DJS}.

Numerous research studies explored analytic and numerical solutions for the 1-D version of the GBHE and similar reducible equations. Different methods are available in the literature, such as spectral methods~\cite{HAE}, hybrid spectral-collocation methods~\cite{MTD}, variational iteration methods~\cite{BMI}, Adomian decomposition method~\cite{HRRA}, homotopy analysis method~\cite{AMF}, differential transform method~\cite{JFm}, the Haar wavelength methods~\cite{CIb}, collocation methods~\cite{MRe}, and many more.However, for the higher-dimensional case (2D-3D), the performance of some NSFD methods has been studied in [32], and Ervin et al. have discussed finite element approximation by lagging the non-linearity in~\cite{EMR}. 

The global solvability of the GBHE without memory ($\eta = 0$) in 1D using conforming FEM is studied in \cite{MKH}.
 However, the fully discrete case has not been addressed there. In the following year, in \cite{KMR}, the numerical approximation using standard conforming, non-conforming, and DG approximation for the stationary counterpart in higher dimensions ($\mathbb{R}^2$ and $\mathbb{R}^3$) has been discussed under stringent conditions on parameters and given data, as stated in \cite[Theorem 3.3-3.6]{KMR}. 
 In \cite{GBHE}, the authors established the first result in the direction of the existence and uniqueness of the weak solution for the GBHE with memory.
 Moreover, the paper discusses the regularity results under different assumptions on the initial data and external forcing.
 A priori error estimates using the standard conforming finite element method (CFEM) are also given  in \cite{GBHE}. 
 
As per the author's knowledge, this work is the first contribution in the direction of the non-conforming approximation of GBHE with weakly singular kernels using CR and DG elements. Details of the significant contributions of this work are as follows:

\begin{itemize}
	\item[\textbf{--}] In this study, we propose two novel finite element discretization schemes for the GBHE equation with memory using non-conforming and DG approximation, presented in equations \eqref{2.ncweakform} and \eqref{2.dgweakform}. Specifically, we propose the new idea to handle the nonlinear convective terms. These formulations facilitate the proof of solvability, stability, and a priori error estimates without imposing any constraints on the parameters. 
	Moreover, these new schemes would also be applicable to a variety of fluid flow models for estimating the convection term.
	\item[\textbf{--}]   Due to the presence of weakly singular kernels, the analysis becomes complex due to the existence of singularities at specific points, despite the valuable insights they provide. By assuming the positive nature of the weakly singular kernel, we establish optimal convergence for the semi-discrete scheme using both CR and DG elements. 
		\item[\textbf{--}]   The significance of our work lies in providing error estimates for the fully discrete case without relying on the assumption $u_{tt}\in \L^2(0, T;\L^2(\Omega))$, which necessitates smoother boundary conditions and may not be applicable to various natural physical problems. Our analysis demonstrates the convergence of the fully discrete scheme under minimal regularity assumptions, making it suitable for convex domains or domains with $C^2$ boundaries, thereby catering to a wide range of problems.
	\item[\textbf{--}] Furthermore, we conduct numerical computations for various examples to validate the derived results. Additionally, we offer numerical evidence supporting the applicability of our proposed method to equations involving the Caputo fractional derivative and showing the spiral wave structure for the FitzHugh–Nagumo model.
\end{itemize}
Lately, the residual-based a posteriori error estimators for the 
 GBHE with memory will be discussed in \cite{GBHE3}, which is the subject of ongoing research.

The paper is organized as follows: Section \ref{2.sec1} introduces the notations used throughout the paper and outlines the regularity results from \cite{GBHE}. Section \ref{2.sec3} focuses on the numerical approximation using finite element discretization. In Section \ref{2.secCR}, we present a semi-discrete formulation that employs Crouzeix-Raviart (CR) elements in space and establishes the solvability result using Carath'eodory's existence theorem for the discrete system. Additionally, we discuss the optimal a prior error estimates achieved via finite element interpolation. The paper further delves into fully-discrete error estimates, utilizing backward Euler in time and NCFEM in space, as discussed in Section \ref{sec2.2.2}. We also present corresponding findings using DG elements, which are discussed in Section \ref{2.secDG}. Finally, Section \ref{2.secnum} examines and discusses the computational results.

\section{Finite Element Method} \setcounter{equation}{0}\label{2.sec3}
In this section, we first provide the necessary functional space and notations that are used consistently in the paper. Further, the error estimates are discussed using NCFEM and DGFEM for both semi-discrete as well as fully-discrete cases.
\subsection{Preliminaries}\label{2.sec1}
Let $\mathrm{C}_0^{\infty}(\Omega)$ be the set of infinitely differentiable functions  having compact support within the domain $\Omega.$ The spaces, $\L^p(\Omega)$ for $p\in[1,\infty],$ demonstrate the standard Lebesgue spaces and their associated norms are represented as $\|\cdot\|_{\L^p}.$ Let $\H^k(\Omega)$ be the standard Sobolev space. Specifically, the space $\H_0^1(\Omega)$ represents the closure of $\mathrm{C}_0^{\infty}(\Omega)$ with respect to $\H^1$-norm. The sum space $ X'_p = \H^{-1}(\Omega)+\L^{p'}(\Omega)$ is the dual space of the intersection space 	$ X_p = \H_0^1(\Omega)\cap\L^p(\Omega).$
We consider the kernel $K(\cdot)$ to be \emph{weakly singular positive kernel} such  that $K\in\L^1(0,T)$ and    for any $T> 0$, we have
\begin{align}\label{2.pk}
	\int_0^T \int_0^t K(t-\tau)u(\tau)u(t)\mathrm{~d}\tau \mathrm{~d}t\geq 0,\  \ \forall\ u\in \L^2(0, T).
\end{align}
The weak formulation for $u_0\in\L^{2}(\Omega)$ and $f\in\L^{2}(0,T;\H^{-1}(\Omega))$, of \eqref{2.GBHE}, for a.e. $t \in (0, T )$, is given by  
	\begin{align}\label{2.weakform}
		\nonumber(\partial_tu(t), v(t))+\nu (\nabla u(t),\nabla v)+\alpha b(u(t),u(t),v)+\eta(( K*\nabla u)(s),\nabla v)-\beta\langle c(u(t)),v\rangle&=\langle f(t),v\rangle\\
		(u(0), v(t))&=(u_0,v(t)),
	\end{align}
	for any $v\in X_{2(\delta+1)}$ where
	\begin{align*}
		b(u,v,w)= \left(u^{\delta}\sum_{i=1}^d\frac{\partial u}{ \partial x_i},w\right),
		\quad 	(c(u),v)=(u(1-u^{\delta})(u^{\delta}-\gamma),v).
\end{align*}
The existence and uniqueness of the  weak solution \eqref{2.GBHE} have been discussed in \cite{GBHE} and for the smoothness assumption on the initial data, we have the following regularity results
\begin{theorem}[Regularity]\label{2.weaksolution} Let $u$ be the solution of the weak form defined in \eqref{2.weakform}. 
	\begin{enumerate}
		\item  For $u_0\in\L^{2}(\Omega)$ and $f\in\L^{2}(0,T;\H^{-1}(\Omega))$  we have, $\partial_tu\in\L^{\frac{2(\delta+1)}{2\delta+1}}(0,T;X'_{2(\delta+1)})$ and 
		\begin{align*}
			\nonumber
			&u\in \L^{2}(0,T; \H_0^1(\Omega))\cap\mathrm{L}^{\infty}(0,T; \L^{2}(\Omega))\cap\L^{2(\delta+1)}(0,T; \L^{2(\delta+1)}(\Omega)).
		\end{align*}
		\item For $f\in\L^2(0,T;\L^2(\Omega))$ and $u_0\in X_{2(\delta+1)},$ it follows that
			\begin{align*}
				\nonumber&u\in\mathrm{L}^2(0,T;\H^2(\Omega))\cap\mathrm{L}^{\infty}(0,T;X_{2(\delta+1)})\cap\mathrm{L}^{2(\delta+1)}(0,T;\mathrm{L}^{6(\delta+1)}(\Omega)),\partial_t u\in\mathrm{L}^2(0,T;\L^2(\Omega)).
		\end{align*}	 
		\item 	If $\delta \in [1,\infty)$, for $d=2$, and $\delta\in[1,2]$ for $d=3.$ For $u_0\in\H^2(\Omega)\cap\H_0^1(\Omega)$ and $f\in\H^1(0,T;\L^2(\Omega))$,we have $$\partial_tu\in\L^{\infty}(0,T;\L^2(\Omega))\cap\L^2(0,T;\H_0^1(\Omega)).$$ Additionally, for  $u \in \L^{\infty}(0,T;\H^2(\Omega))$ we need $f\in\H^1(0,T;\L^2(\Omega))\cap \L^2(0,T;\H^1(\Omega)).$ 
	\end{enumerate}

\end{theorem}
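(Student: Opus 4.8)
The plan is to establish all three assertions by a Faedo--Galerkin procedure together with a three‑level hierarchy of energy estimates, in each of which the weakly singular kernel enters only through the integrability $K\in\L^1(0,T)$ and the positivity \eqref{2.pk}. I would take $\{\phi_j\}_{j\ge1}$ to be the $\L^2(\Omega)$‑orthonormal Dirichlet eigenfunctions of $-\Delta$, set $V_m=\mathrm{span}\{\phi_1,\dots,\phi_m\}$, let $u_m\in V_m$ solve the ODE system obtained by projecting \eqref{2.weakform} onto $V_m$ (Carath\'eodory's theorem gives a local solution, and the estimates below make it global), and then pass to the limit $m\to\infty$, upgrading weak convergence to strong $\L^2(0,T;\L^2(\Omega))$ convergence by an Aubin--Lions argument so that the convective and reaction terms converge.

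For the first part I would test with $v=u_m$. The cubic convective term drops out, since $b(u_m,u_m,u_m)=\tfrac1{\delta+2}\sum_{i=1}^d\int_\Omega\partial_i(u_m^{\delta+2})\,\mathrm dx=0$ by the homogeneous boundary condition; the reaction term becomes $-\beta\langle c(u_m),u_m\rangle=\beta\|u_m\|_{\L^{2(\delta+1)}}^{2(\delta+1)}-\beta(1+\gamma)\|u_m\|_{\L^{\delta+2}}^{\delta+2}+\beta\gamma\|u_m\|_{\L^2}^2$, where I would absorb the middle (interpolated) term into the first and the $\L^2$ norm via Young's inequality; and, after integration in time, the memory contribution $\eta\int_0^t(( K*\nabla u_m)(s),\nabla u_m(s))\,\mathrm ds$ is nonnegative by \eqref{2.pk} (applied componentwise) and simply discarded. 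Gr\"onwall's lemma then yields uniform bounds in $\L^\infty(0,T;\L^2(\Omega))\cap\L^2(0,T;\H_0^1(\Omega))\cap\L^{2(\delta+1)}(0,T;\L^{2(\delta+1)}(\Omega))$. For $\partial_tu$ I would bound each term of the equation in its natural space: $\nu\Delta u_m$ and $\eta K*\Delta u_m$ in $\L^2(0,T;\H^{-1}(\Omega))$ (the latter by Young's convolution inequality), $\alpha u_m^\delta\sum_i\partial_iu_m=\tfrac{\alpha}{\delta+1}\sum_i\partial_i(u_m^{\delta+1})\in\L^2(0,T;\H^{-1}(\Omega))$ because $u_m^{\delta+1}\in\L^2(0,T;\L^2(\Omega))$, and $c(u_m)\in\L^{\frac{2(\delta+1)}{2\delta+1}}(0,T;\L^{\frac{2(\delta+1)}{2\delta+1}}(\Omega))$ thanks to the $\L^{2(\delta+1)}$ bound just obtained; the slowest exponent gives $\partial_tu\in\L^{\frac{2(\delta+1)}{2\delta+1}}(0,T;X'_{2(\delta+1)})$.

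For the second part I would test with $v=\partial_tu_m$ and, separately, with $v=-\Delta u_m$ (legitimate since the $\phi_j$ are smooth). The first choice produces $\|\partial_tu_m\|_{\L^2}^2+\tfrac\nu2\tfrac{\mathrm d}{\mathrm dt}\|\nabla u_m\|_{\L^2}^2$ plus lower‑order terms that I would control by Young's inequality and the embedding $\H^2(\Omega)\hookrightarrow\L^\infty(\Omega)$ valid for $d\le3$; the memory term is rewritten as $\eta(( K*\Delta u_m),\partial_tu_m)$ and absorbed via Young's inequality together with a Gr\"onwall estimate in $\|\Delta u_m(t)\|_{\L^2}$ using $K\in\L^1(0,T)$. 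Elliptic regularity applied to $-\nu\Delta u_m=f-\partial_tu_m-\alpha u_m^\delta\sum_i\partial_iu_m+\beta c(u_m)+\eta K*\Delta u_m$ (with the convolution on the right reabsorbed by the same Gr\"onwall mechanism) then gives the $\L^2(0,T;\H^2(\Omega))$ bound; since $u_0\in X_{2(\delta+1)}$ controls $\|\nabla u_m(0)\|$ and $\|u_m(0)\|_{\L^{2(\delta+1)}}$, one obtains $u\in\L^\infty(0,T;X_{2(\delta+1)})\cap\L^2(0,T;\H^2(\Omega))$ and $\partial_tu\in\L^2(0,T;\L^2(\Omega))$. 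The bound in $\L^{2(\delta+1)}(0,T;\L^{6(\delta+1)}(\Omega))$ I would get from a supplementary energy estimate (testing with a suitable power of $u_m$) combined with Sobolev embedding and interpolation against the bounds already in hand.

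For the third part I would differentiate the Galerkin equations in time, put $w_m=\partial_tu_m$, and test with $v=w_m$. The delicate term is $\partial_t(\eta K*\Delta u_m)=\eta K(t)\Delta u_0+\eta(K*\Delta w_m)(t)$: I would \emph{not} integrate the datum part by parts but keep it as $\eta(K(t)\Delta u_0,w_m)$, which is admissible because $\|K(t)\Delta u_0\|_{\L^2}=|K(t)|\,\|\Delta u_0\|_{\L^2}\in\L^1(0,T)$ thanks to $u_0\in\H^2(\Omega)$ and $K\in\L^1(0,T)$ (so this term is controlled by $\|K\|_{\L^1}\|\Delta u_0\|_{\L^2}\|w_m\|_{\L^\infty(0,t;\L^2)}$ and absorbed), whereas the convolution part, after integration by parts, contributes $\eta\int_0^t(( K*\nabla w_m),\nabla w_m)\,\mathrm ds\ge0$ by \eqref{2.pk}; thus no pointwise value $K(0)$ is ever needed. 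The differentiated convective and reaction terms are estimated by H\"older and Gagliardo--Nirenberg, and it is exactly here that the restriction $\delta\in[1,\infty)$ for $d=2$ and $\delta\in[1,2]$ for $d=3$ enters, since the resulting $\L^{p}$ norms of $u_m$ with $p$ depending on $\delta$ must be dominated by the $\H^1$ and $\H^2$ norms already under control. Bounding $w_m(0)$ in $\L^2(\Omega)$ from the equation evaluated at $t=0$ — using $u_0\in\H^2\cap\H_0^1$, $f\in\H^1(0,T;\L^2(\Omega))\hookrightarrow C([0,T];\L^2(\Omega))$, and $( K*\Delta u)(0)=0$ — Gr\"onwall gives $\partial_tu\in\L^\infty(0,T;\L^2(\Omega))\cap\L^2(0,T;\H_0^1(\Omega))$; feeding this into the elliptic identity, now with right‑hand side in $\L^\infty(0,T;\L^2(\Omega))$ when $f\in\H^1(0,T;\L^2(\Omega))\cap\L^2(0,T;\H^1(\Omega))$, and running one more Gr\"onwall to absorb $\eta K*\Delta u$, gives $u\in\L^\infty(0,T;\H^2(\Omega))$.

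The step I expect to be the main obstacle is the treatment of the weakly singular memory term: at the base level it cannot be estimated and must be discarded through the sign condition \eqref{2.pk}, and at the higher levels — most acutely after differentiating in time in the third part — the crux is to peel off the datum term $K(t)\Delta u_0$ and then rely solely on $K\in\L^1(0,T)$ together with Young's convolution inequality and a (weakly singular) Gr\"onwall absorption, so that $K(0)$ never appears; a secondary difficulty is matching the convective nonlinearity $u^\delta\nabla u$ to the available Sobolev embeddings, which is what forces the dimension‑dependent admissible range of $\delta$ in the third part.
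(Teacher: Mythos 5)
The paper does not prove this theorem itself --- it simply cites Theorems 2.2--2.5 of the companion work \cite{GBHE}, where the result is established by precisely the Faedo--Galerkin plus hierarchical-energy-estimate argument you outline. Your proposal is a sound reconstruction of that standard route (in particular the vanishing of $b(u_m,u_m,u_m)$, the duality count giving $\partial_t u\in\L^{\frac{2(\delta+1)}{2\delta+1}}(0,T;X'_{2(\delta+1)})$, the discard of the memory term via the positivity \eqref{2.pk}, and the splitting $\partial_t(K*\Delta u)=K(t)\Delta u_0+(K*\Delta\partial_t u)$ so that only $K\in\L^1(0,T)$ is ever used), so there is nothing in the present paper to compare it against beyond the citation.
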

\begin{proof}
	The above regularity result have already been established in \cite[Theorem 2.2-2.5]{GBHE}.
\end{proof}
\begin{lemma}\cite{MTM}\label{2.l1}
	There holds: 
	$$\left(\int_0^{T}\left(\int_0^sK(s-\tau)\phi(\tau)\mathrm{~d}\tau\right)^2\mathrm{~d}s\right)^{\frac{1}{2}}\leq \left(\int_0^{T}|K(s)|\mathrm{~d}s\right)\left(\int_0^{T}\phi^2(s)\mathrm{~d}s\right)^\frac{1}{2},$$
	for each $\phi\in \L^2(0,T)$ and $K \in \L^1(0,T)$ with $T>0$.
\end{lemma}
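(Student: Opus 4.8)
The plan is to recognize the inner integral as a (truncated) convolution and to prove the bound by a direct Cauchy--Schwarz-plus-Tonelli argument; this is precisely the finite-interval version of Young's convolution inequality $\L^1 * \L^2 \to \L^2$, and crucially it never uses a pointwise bound on $K$, only $K\in\L^1(0,T)$, so the weak singularity of the kernel causes no difficulty. Write $\|K\|_{\L^1} := \int_0^T|K(s)|\,\mathrm{d}s$.

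First, fix $s\in(0,T)$ and split the kernel weight as $|K(s-\tau)| = |K(s-\tau)|^{1/2}\,|K(s-\tau)|^{1/2}$. Applying the Cauchy--Schwarz inequality in $\tau$ to the inner integral gives
\[
\left(\int_0^s K(s-\tau)\phi(\tau)\,\mathrm{d}\tau\right)^2 \le \left(\int_0^s |K(s-\tau)|\,\mathrm{d}\tau\right)\left(\int_0^s |K(s-\tau)|\,\phi(\tau)^2\,\mathrm{d}\tau\right).
\]
The first factor on the right, after the substitution $\sigma = s-\tau$, equals $\int_0^s|K(\sigma)|\,\mathrm{d}\sigma \le \|K\|_{\L^1}$, uniformly in $s$. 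Integrating the displayed inequality over $s\in(0,T)$ and interchanging the order of integration in the remaining double integral over the triangle $\{0<\tau<s<T\}$ — legitimate by Tonelli's theorem, since the integrand is nonnegative — the inner integral becomes $\int_\tau^T|K(s-\tau)|\,\mathrm{d}s = \int_0^{T-\tau}|K(\sigma)|\,\mathrm{d}\sigma \le \|K\|_{\L^1}$, again uniformly. Collecting the two bounded factors yields
\[
\int_0^T\left(\int_0^s K(s-\tau)\phi(\tau)\,\mathrm{d}\tau\right)^2\mathrm{d}s \le \|K\|_{\L^1}^2\int_0^T \phi(\tau)^2\,\mathrm{d}\tau,
\]
and taking square roots gives the assertion.

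The only point requiring any care — the ``main obstacle'', such as it is — is the Fubini/Tonelli interchange in the presence of the singularity of $K$; this is dispatched cleanly by working with $|K|$ throughout, so that every integrand is nonnegative and Tonelli applies without having to verify integrability of the mixed term $|K(s-\tau)|\phi(\tau)^2$ beforehand (its finiteness comes out of the computation, and one only needs $K\in\L^1$, $\phi\in\L^2$). Everything else is a routine estimate; alternatively one could simply extend $K$ and $\phi$ by zero outside $(0,T)$, invoke Young's inequality on $\RR$, and restrict back to $(0,T)$, which is what \cite{MTM} effectively does.
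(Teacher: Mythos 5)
Your proof is correct. The paper itself gives no argument for this lemma --- it simply cites \cite{MTM} --- so there is no internal proof to compare against; your Cauchy--Schwarz-plus-Tonelli derivation is the standard, complete proof of the finite-interval Young inequality $\L^1 * \L^2 \to \L^2$, and it correctly isolates the only delicate point (working with $|K|$ so that Tonelli applies despite the singularity, which also guarantees a posteriori that the convolution is absolutely convergent for a.e.\ $s$). Nothing is missing.
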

\subsection{Non-conforming Finite Element Method}
\subsubsection{Semi-discrete non-conforming FEM}\label{2.secCR}
This section is devoted to the semi-discrete Galerkin approximation of GBHE with memory using NCFEM. The domain $\Omega$ is divided into shape-regular meshes (consisting of triangular or rectangles for 2D or tetrahedron for 3D) denoted by $\mathcal{T}_h$. Let the set of edges, the interior edges, and the boundary edges of the triangulation be denoted by the symbols $\mathcal{E}_h$, $\mathcal{E}^i_h$ and $\mathcal{E}^{\partial}_h$, respectively. For a given $\mathcal{T}_h$, $C^{0}(\mathcal{T}_h)$ and $H^s(\mathcal{T}_h)$  denote the broken spaces linked with continuous and differentiable function spaces, respectively. 
Let the space of polynomials having a degree at most one be given by $\mathbb{P}_1$. The definition of the finite element space using Crouzeix-Raviart (CR) element
\begin{equation}\label{2.CRFEM11}
	V_h =\left\{v\in \L^2(\Omega) :\ \forall\  K\in \mathcal{T}_h ; v_{|_K}\in\mathbb{P}_1 \;\mbox{and}\; \int_E [|v|]=0\quad E\in\mathcal{E}\right\}.
\end{equation}
For each triangulation, we define the piecewise gradient as $\nabla_h: \H^1(\mathcal{T}_h)\rightarrow \L^2(\Omega;\mathbb{R}^d)$ with $(\nabla_h v)|_K = \nabla v|_K, \forall ~K\in \mathcal{T}_h$. In this context,
the semi-discrete weak formulation of (\ref{2.GBHE}) is given as: For each $t\in(0,T)$, find $ u_h\in V_h$ such that
\begin{align}\label{2.ncweakform}
\nonumber(\partial_tu_h, \chi)+A_{CR}(u_h(t),\chi)+\eta(( K*\nabla_h u_h)(t),\nabla_h \chi)&=( f^k,\chi),\\
(u_h(0),\chi)&=(u_h^0,\chi), \quad \quad \forall \chi \in V_h,
\end{align}
where, 
\begin{align*}
A_{CR}(u_h(t),\chi):=&  \nu a_{CR}(u_h(t),\chi) + \alpha b_{CR}(u_h(t),u_h(t),\chi)-\beta(c(u_h(t)),\chi).\
\end{align*}
with $a_{CR}(u,v) = (\nabla_h u,\nabla_h v)$ and $c(u)=u(1-u^{\delta})(u^{\delta}-\gamma).$
For the non-linear operator, if we define the operator $b_{CR}(\cdot,\cdot,\cdot)$ as in the case of conforming FEM{\cite{GBHE}}, given by
\[b_{CR}(u,v,w) =  \sum_{K\in \mathcal{T}_h}\int_{\mathcal{T}_h} u^{\delta}(x)\sum_{i=1}^{d}\frac{\partial v(x)}{\partial x_i}w(x)\mathrm{~d}x,\]
then $b_{CR}(u,u,u)\neq 0$ and using H\"older's and Young's inequality as
\begin{align*}
\alpha b_{CR}(u,u,u)
 = \frac{\alpha}{\delta+1} \Bigg(\frac{\partial u(x)}{\partial {x_i}} ,u^{\delta+1}(x)\Bigg)_{\mathcal{T}_h} \leq \epsilon\|\nabla_h u\|_{\L^2(\mathcal{T}_h)}^2 + C(\epsilon)\|u\|_{\L^{2(\delta+1)}}^{2(\delta+1)},
\end{align*}
where $\epsilon$ depends on the parameters $\alpha, \beta, \delta$.
The stability estimate as in \cite[Lemma 3.2]{GBHE} does not hold true for any choice of parameters (depends on the choice of $\epsilon$). So, to avoid the restriction on parameters, we redefine the operator as: 
For $u,w\in \H^1_0(\Omega),$
 using integration by parts in $b(\cdot, \cdot,\cdot)$, we have  
\begin{align*}
\nonumber	b(u;u,w) = \left(u^{\delta}\sum_{i=1}^d\frac{\partial u}{ \partial x_i},w\right)&= a_1 \left(u^{\delta}\sum_{i=1}^d\frac{\partial u}{ \partial x_i},w\right) + a_2 \left(u^{\delta}\sum_{i=1}^d\frac{\partial u}{ \partial x_i},w\right)\\&= a_1 \left(u^{\delta}\sum_{i=1}^d\frac{\partial u}{ \partial x_i},w\right) - \frac{a_2}{\delta+1} \left(u^{\delta}\sum_{i=1}^d\frac{\partial w}{ \partial x_i},u\right),
\end{align*}
where $a_1, a_2$ are constants chosen such that $a_1 + a_2 = 1$. In particular, take $a_1 = \frac{a_2}{\delta+1}$, so we introduce
\begin{align}\label{2.fcr}
 \nonumber b_{CR}(u;u,w) &:= \frac{1}{\delta+2}\left(u^{\delta}\sum_{i=1}^d\frac{\partial u}{ \partial x_i},w\right)_{\mathcal{T}_h} - \frac{1}{\delta+2}\left(u^{\delta}\sum_{i=1}^d\frac{\partial w}{ \partial x_i},u\right)_{\mathcal{T}_h}.
\end{align} 
This kind of construction is useful as $b_{CR}(u;u,u) = 0$,
so we can prove the stability without any condition on the parameters, as shown in Lemma \ref{2.stabilitysd}. 
Note that
\begin{align}\label{2.7}
	(c(u),u)
	&\leq (1+\gamma)\|u\|^{\delta+1}_{\L^{2(\delta+1)}}\|u\|_{\L^2}-\gamma\|u\|_{\L^2}^2-\|u\|_{\L^{2(\delta+1)}}^{2(\delta+1)}, \qquad \forall ~u\in\L^{2(\delta+1)}(\Omega).
\end{align}
 The discrete energy norm for CR approximation is defined as $|\!|\!|{v}|\!|\!|^2_{CR}:= \int_0^T\|\nabla_h u (s)\|_{\L^2(\mathcal{T}_h)}^2\mathrm{~d}s$.

 The stability estimate for the semi-discrete system defined in \eqref{2.ncweakform} is discussed in the following lemma.
\begin{lemma}\label{2.stabilitysd}
	Assume that $f\in\L^2(0,T;\L^2(\Omega))$ and $u_0\in \L^2(\Omega)$. The weak solution $u_h\in V_h$ of the semi-discrete formulation \eqref{2.ncweakform} satisfies the following stability estimate:
	\begin{align}\label{2.13} 
		&\sup_{0\leq t\leq T}\|u_h(t)\|_{\L^2}^2+\nu|\!|\!|{u_h}|\!|\!|^2_{CR}\leq\left(\|u_0\|_{\L^2}^2+\frac{1}{\nu}\int_0^T\|f(t)\|_{\L^2}^2\mathrm{~d}t\right)e^{\beta(1+\gamma^2)T}.
	\end{align}
\end{lemma}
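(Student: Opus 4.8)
The plan is the classical energy argument: take the test function $\chi=u_h(t)$ in \eqref{2.ncweakform}, integrate the resulting identity in time, discard the terms of favourable sign, and close with Grönwall's inequality. With $\chi=u_h(t)$, using $(\partial_t u_h,u_h)=\tfrac12\tfrac{\mathrm d}{\mathrm dt}\|u_h\|_{\L^2}^2$ and $a_{CR}(u_h,u_h)=\|\nabla_h u_h\|_{\L^2(\mathcal{T}_h)}^2$, and moving the reaction term to the right, one obtains
\begin{align*}
&\tfrac12\tfrac{\mathrm d}{\mathrm dt}\|u_h(t)\|_{\L^2}^2+\nu\|\nabla_h u_h(t)\|_{\L^2(\mathcal{T}_h)}^2+\alpha\,b_{CR}(u_h(t);u_h(t),u_h(t))\\
&\qquad+\eta\big((K*\nabla_h u_h)(t),\nabla_h u_h(t)\big)=(f(t),u_h(t))+\beta\,(c(u_h(t)),u_h(t)).
\end{align*}
The convective contribution disappears outright: by the very construction \eqref{2.fcr}, the two summands defining $b_{CR}(u;u,w)$ are equal when $w=u$, so $b_{CR}(u_h;u_h,u_h)=0$. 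This is precisely the reason the skew form was introduced, and it is what frees the estimate from any condition on the parameters.

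For the reaction term I would insert \eqref{2.7} and then apply Young's inequality to the cross term, writing $(1+\gamma)\|u_h\|_{\L^{2(\delta+1)}}^{\delta+1}\|u_h\|_{\L^2}\le\tfrac12\|u_h\|_{\L^{2(\delta+1)}}^{2(\delta+1)}+\tfrac{(1+\gamma)^2}{2}\|u_h\|_{\L^2}^2$; the two negative terms $-\gamma\|u_h\|_{\L^2}^2$ and $-\|u_h\|_{\L^{2(\delta+1)}}^{2(\delta+1)}$ already present in \eqref{2.7} then absorb the leftovers, leaving $\beta(c(u_h),u_h)\le\tfrac{\beta}{2}(1+\gamma^2)\|u_h\|_{\L^2}^2$ (since $(1+\gamma)^2-2\gamma=1+\gamma^2$). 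The forcing term I would control by Cauchy--Schwarz, the discrete Poincaré--Friedrichs inequality on $V_h$, and Young's inequality with weight $\nu$, so that half of the viscous term absorbs it and only a term proportional to $\tfrac1\nu\|f\|_{\L^2}^2$ survives on the right; the discrete Poincaré constant is harmless and I would absorb it into this term.

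The one genuinely delicate point --- and the step I expect to be the main obstacle --- is the memory term, because $\eta\big((K*\nabla_h u_h)(s),\nabla_h u_h(s)\big)$ carries no sign at a fixed time $s$: only its integral over $(0,t)$ does, through the positivity hypothesis \eqref{2.pk}, and $K$ is merely $\L^1$, so no pointwise-in-time coercivity is available. Hence the order of operations matters: I would integrate the energy identity over $(0,t)$ \emph{before} handling this term, and then use Fubini's theorem to rewrite $\int_0^t\eta\big((K*\nabla_h u_h)(s),\nabla_h u_h(s)\big)\,\mathrm ds$ as $\eta$ times a sum over the $d$ coordinate directions and over $x\in\Omega$ of double time-integrals of exactly the form appearing in \eqref{2.pk}. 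Applying \eqref{2.pk} (with the upper limit $t$ in place of $T$) to $s\mapsto\partial_j u_h(s,x)\in\L^2(0,t)$ for each fixed $x$ and each index $j$, and then integrating over $\Omega$, shows this quantity is nonnegative and may be dropped. This uses $\nabla_h u_h\in\L^2(0,T;\L^2(\Omega))$, which is part of the asserted bound itself but is available a priori from the regularity of the finite-dimensional Volterra integro-differential system that $u_h$ solves in $V_h$.

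Collecting the pieces (and rescaling by $2$), integrating over $(0,t)$, and using $\|u_h(0)\|_{\L^2}\le\|u_0\|_{\L^2}$ (taking $u_h^0$ to be the $\L^2$-projection of $u_0$ onto $V_h$), I would reach
\begin{align*}
\|u_h(t)\|_{\L^2}^2+\nu\int_0^t\|\nabla_h u_h(s)\|_{\L^2(\mathcal{T}_h)}^2\,\mathrm ds
&\le\|u_0\|_{\L^2}^2+\frac1\nu\int_0^T\|f(s)\|_{\L^2}^2\,\mathrm ds\\
&\qquad+\beta(1+\gamma^2)\int_0^t\|u_h(s)\|_{\L^2}^2\,\mathrm ds.
\end{align*}
Grönwall's inequality applied to $t\mapsto\|u_h(t)\|_{\L^2}^2$ then yields $\|u_h(t)\|_{\L^2}^2\le\big(\|u_0\|_{\L^2}^2+\tfrac1\nu\int_0^T\|f\|_{\L^2}^2\,\mathrm ds\big)e^{\beta(1+\gamma^2)t}$; feeding this back into the last display to bound $\nu|\!|\!|u_h|\!|\!|^2_{CR}$ and then taking the supremum over $t\in[0,T]$ gives exactly \eqref{2.13}.
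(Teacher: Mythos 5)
Your proposal is correct and follows essentially the same route as the paper: test with $\chi=u_h$, use $b_{CR}(u_h;u_h,u_h)=0$, bound the reaction term via \eqref{2.7} and Young's inequality (arriving at the same factor $\beta(1+\gamma^2)$), absorb the forcing into half the viscous term, integrate in time before discarding the memory term by the positivity hypothesis \eqref{2.pk}, and conclude with Grönwall. Your added care about applying \eqref{2.pk} componentwise after Fubini, and about the order of integration versus sign-dropping for the convolution term, is a slightly more explicit rendering of exactly what the paper does implicitly.
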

\begin{proof}
	 Choosing $\chi= u_h$ in \eqref{2.ncweakform}, and using $b_{CR}(u,u,u) = 0,$ with the estimate \eqref{2.7}, we have 
	\begin{align*}
		&\frac{1}{2}\frac{\mathrm{~d}}{\mathrm{~d}t}\|u_h(t)\|_{\L^2}^2+\nu\|\nabla_h u_h(t)\|_{\L^2(\mathcal{T}_h)}^2+\beta\gamma\|u_h(t)\|_{\L^2}^2+\beta\|u_h(t)\|_{\L^{2(\delta+1)}}^{2(\delta+1)}\nonumber\\&\quad+\eta((K*\nabla_h u_h)(t),\nabla_h u_h(t))=\beta(1+\gamma)(u_h^{\delta+1}(t),u_h(t)) + (f(t),u_h(t)).
	\end{align*}
	for a.e. $t\in[0,T]$. Using Cauchy-S\'chwarz, Poincar\'e and Young's inequality, we find that
	\begin{align*}
		&\nonumber\frac{1}{2}\frac{\mathrm{~d}}{\mathrm{~d}t}\|u_h(t)\|_{\L^2}^2+ \frac{\nu}{2}\|\nabla_h u_h(t)\|_{\L^2(\mathcal{T}_h)}^2+\beta\gamma\|u_h(t)\|_{\L^2}^2+	\frac{\beta}{2}\|u_h(t)\|_{\L^{2(\delta+1)}}^{2(\delta+1)}\\&+\eta((K*\nabla_h u_h)(t),\nabla_h u_h(t))\leq \frac{\beta(1+\gamma)^2}{2}\|u_h(t)\|_{\L^2}^2 +\frac{1}{\nu}\|f\|^2_{\L^2}.
	\end{align*}
	Integrating w.r.t. time, we get
	\begin{align}\label{2.11}
		&\|u_h(t)\|_{\L^2}^2+\nu|\!|\!|u_h|\!|\!|^2_{CR} +\beta\int_0^t\|u_h(s)\|_{\L^{2(\delta+1)}}^{2(\delta+1)}\mathrm{~d}s+2\eta\int_0^t((K*\nabla_h u_h)(s),\nabla_h u_h(s))\mathrm{~d}s\nonumber\\&\leq\|u_0\|_{\L^2}^2+\frac{1}{\nu}\int_0^t\|f(s)\|_{\L^2}^2\mathrm{~d}s+\beta(1+\gamma^2)\int_0^t\|u_h(s)\|_{\L^2}^2\mathrm{~d}s, \quad \forall ~ t\in[0,T].
	\end{align}
	 As the kernel $K(\cdot)$ is a positive kernel \eqref{2.pk}, and using Gronwall's inequality in \eqref{2.11} yields 
	\begin{align}
\nonumber	\|u_h(t)\|_{\L^2}^2+\nu|\!|\!|{u_h}|\!|\!|^2_{CR}+\beta\int_0^t\|u_h(s)\|_{\L^{2(\delta+1)}}^{2(\delta+1)}\mathrm{~d}s\leq\left(\|u_0\|_{\L^2}^2+\frac{1}{\nu}\int_0^T\|f(t)\|_{\L^2}^2\mathrm{~d}t\right)e^{\beta(1+\gamma^2)T}, 
	\end{align}
 $\forall~ t\in[0,T]$. Notably, the RHS is independent of $h.$ Taking supreme over time $0\leq t\leq T,$ leads to the stated result. 
\end{proof}
\begin{lemma}\label{2.crnclem111}
	There holds:
	\begin{align*}
		-\alpha[b_{CR}(u_h;u_h,w)-b_{CR}(v_h;v_h,w)]&\le  \frac{\nu}{2}\|\nabla_h w\|_{\L^2(\mathcal{T}_h)}^2+C(\alpha,\nu)\left(\|u_h\|^{\frac{8\delta}{4-d}}_{\L^{4\delta}}+\|v_h\|^{\frac{8\delta}{4-d}}_{\L^{4\delta}}\right)\|w\|_{\L^2}^2,\\
		A_{CR}(u_h,w)-A_{CR}(v_h,w)&\ge \frac{\nu}{2}\|\nabla_h w\|_{\L^2(\mathcal{T}_h)}^2 +\frac{\beta}{4}(\|{u}^{\delta}_hw\|_{\L^2}^2+\|v^{\delta}_hw\|_{\L^2}^2)\nonumber\\
		&\quad+\left(\beta\gamma-C(\beta,\alpha,\delta) - C(\alpha,\nu)\Big(\|u_h\|^{\frac{8\delta}{4-d}}_{\L^{4\delta}}+\|v_h\|^{\frac{8\delta}{4-d}}_{\L^{4\delta}}\Big)\right)\|w\|_{\L^2}^2,
	\end{align*}
	where $u_h,v_h\in V_{h}$, $w=u_h-v_h$,   $C(\alpha, \nu) = \left(\frac{4+d}{4\nu}\right)^{\frac{4+d}{4-d}}\left(\frac{4-d}{8}\right)(\frac{2^{\delta-1}C\alpha}{(\delta+2)(\delta+1)})^{\frac{4-d}{8}}$ and $\mathcal{C}(\beta,\gamma,\delta)= \frac{\beta}{2}2^{2\delta}(1+\gamma)^2(\delta+1)^2$ is a positive constant depending on parameters.
\end{lemma}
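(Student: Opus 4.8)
The plan is to establish both inequalities by choosing the test function $w = u_h - v_h$ in the difference of the relevant bilinear/trilinear forms and systematically estimating each term. I would start with the convective term, since that is where the new symmetrized form $b_{CR}$ and the parameter-free estimate pay off. Write
\[
b_{CR}(u_h;u_h,w) - b_{CR}(v_h;v_h,w) = \tfrac{1}{\delta+2}\Big[\big(u_h^{\delta}\textstyle\sum_i\partial_{x_i}u_h - v_h^{\delta}\sum_i\partial_{x_i}v_h,\,w\big)_{\cT_h} - \big(u_h^{\delta}\sum_i\partial_{x_i}w - v_h^{\delta}\sum_i\partial_{x_i}w,\,\cdot\big)_{\cT_h}\Big],
\]
and then add and subtract cross terms so that each piece contains a factor of $w$ or $\nabla_h w$ together with a difference $u_h^\delta - v_h^\delta$, which is controlled pointwise by $\delta\,2^{\delta-1}(|u_h|^{\delta-1}+|v_h|^{\delta-1})|w|$. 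After applying H\"older's inequality (splitting the triple products across $\L^2$, $\L^{4\delta}$ and the gradient in $\L^2(\cT_h)$) and the Gagliardo--Nirenberg / Ladyzhenskaya-type interpolation inequality in dimension $d$ to absorb an $\L^{p}$-norm of $w$ by $\|w\|_{\L^2}^{1-\theta}\|\nabla_h w\|_{\L^2(\cT_h)}^{\theta}$ with the exponent $\theta$ dictated by $d$, a Young's inequality with the weight $\epsilon = \nu/2$ yields exactly the first claimed bound, with the constant $C(\alpha,\nu)$ collecting the powers $\tfrac{4+d}{4-d}$ of $\tfrac{1}{\nu}$ and the combinatorial factor $\tfrac{2^{\delta-1}C\alpha}{(\delta+2)(\delta+1)}$. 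The exponent $\tfrac{8\delta}{4-d}$ on the $\L^{4\delta}$-norms is the signature of this interpolation step: $4\delta$ is the integrability one can afford on $u_h,v_h$ after pulling out $\L^2$-factors of $w$, and $\tfrac{8}{4-d}$ is the conjugate exponent produced when the gradient factor is raised to a power that Young's inequality against $\|\nabla_h w\|^2$ forces.

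For the second inequality I would assemble $A_{CR}(u_h,w) - A_{CR}(v_h,w)$ term by term. The diffusion part contributes $\nu\|\nabla_h w\|_{\L^2(\cT_h)}^2$ outright (it is exactly coercive since $a_{CR}$ is the broken Dirichlet form), of which I keep $\tfrac{\nu}{2}$ and reserve the other $\tfrac{\nu}{2}$ to absorb the convective contribution via the first inequality already proved. For the reaction term $-\beta[(c(u_h),w) - (c(v_h),w)]$, I would expand $c(u) = u(1-u^\delta)(u^\delta-\gamma) = -u^{2\delta+1} + (1+\gamma)u^{\delta+1} - \gamma u$ and treat the three monomials separately: the cubic-type term $-(u_h^{2\delta+1}-v_h^{2\delta+1},w)$ is the good monotone term and, after writing $u^{2\delta+1} = u^{\delta}\cdot u^{\delta+1}$ and using the elementary convexity inequality $(a^{2\delta+1}-b^{2\delta+1})(a-b) \ge \tfrac{1}{2}(a^{\delta}(a-b))^2 + \tfrac12(b^\delta(a-b))^2 - (\text{lower order})$ — or more directly the bound $(a^p-b^p)(a-b)\ge c_p(a^{(p-1)/2}(a-b))^2$ type estimate — produces the $\tfrac{\beta}{4}(\|u_h^\delta w\|_{\L^2}^2 + \|v_h^\delta w\|_{\L^2}^2)$ term; the linear term $\gamma\beta(w,w)$ contributes the $+\beta\gamma\|w\|_{\L^2}^2$; and the middle term $(1+\gamma)(u_h^{\delta+1}-v_h^{\delta+1},w)$, after factoring $u^{\delta+1}-v^{\delta+1}$ through $(\delta+1)(|u_h|^\delta+|v_h|^\delta)|w|$ and Young's inequality, is partly absorbed into the just-created $\tfrac{\beta}{4}\|u_h^\delta w\|^2$ terms at the cost of a constant $\mathcal{C}(\beta,\gamma,\delta) = \tfrac{\beta}{2}2^{2\delta}(1+\gamma)^2(\delta+1)^2$ times $\|w\|_{\L^2}^2$. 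Collecting signs and moving the convective bound (with its $C(\alpha,\nu)(\|u_h\|_{\L^{4\delta}}^{8\delta/(4-d)}+\|v_h\|_{\L^{4\delta}}^{8\delta/(4-d)})\|w\|_{\L^2}^2$ and $\tfrac{\nu}{2}\|\nabla_h w\|^2$) to the other side gives the stated lower bound.

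The main obstacle is the convective estimate: one must be careful that the symmetrized form $b_{CR}$ does \emph{not} vanish on the difference $w$ (it only vanishes on $b_{CR}(u;u,u)$), so the splitting has to be arranged so that every residual term carries either a full power of $w$ in $\L^2$ or a single gradient $\nabla_h w$ that Young's inequality can trade against the reserved $\tfrac{\nu}{2}\|\nabla_h w\|^2$ — and crucially \emph{no} term may end up with two gradient factors, since there is no coercivity left to control $\|\nabla_h w\|^4$. Getting the H\"older exponents to close exactly at $\L^{4\delta}$ (rather than some larger space not controlled by the stability estimate of Lemma~\ref{2.stabilitysd} and the interpolation norms available on $V_h$) is the delicate bookkeeping; once the exponent $\tfrac{8\delta}{4-d}$ is pinned down by the $d$-dependent Sobolev embedding $\H^1\hookrightarrow\L^{2d/(d-2)}$ (and its broken analogue, valid for CR elements up to a consistency term that is harmless here because $w$ is tested against itself), the remaining reaction-term manipulations are routine algebra with the monomial expansion of $c(\cdot)$.
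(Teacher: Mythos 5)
Your proposal is correct and follows essentially the same route as the paper: the symmetrized convective difference is reduced (after rewriting $u_h^{\delta}\partial_{x_i}u_h$ as $\tfrac{1}{\delta+1}\partial_{x_i}u_h^{\delta+1}$) to terms of the form $(u_h^{\delta+1}-v_h^{\delta+1},\partial_{x_i}w)$, handled by Taylor's formula, H\"older with the $\L^{4\delta}$/$\L^4$ split, Gagliardo--Nirenberg interpolation of $\|w\|_{\L^4}$, and Young's inequality against $\tfrac{\nu}{2}\|\nabla_h w\|_{\L^2(\mathcal{T}_h)}^2$, which is exactly where the exponent $\tfrac{8\delta}{4-d}$ arises; the reaction term is likewise expanded into the three monomials, with the monotone $2\delta+1$ term yielding $\tfrac{\beta}{4}(\|u_h^{\delta}w\|_{\L^2}^2+\|v_h^{\delta}w\|_{\L^2}^2)$ and the middle term absorbed at the cost of $\mathcal{C}(\beta,\gamma,\delta)\|w\|_{\L^2}^2$ (the paper simply cites the corresponding estimates (2.21)--(2.22) of \cite{GBHE} for this step). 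No substantive difference from the paper's argument.
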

\begin{proof}
To prove the first bound, we use Cauchy-S\'chwarz, inverse inequality, Taylor’s formula, H\"older's and Young’s inequalities such that 
	\begin{align}\label{2.30}
			\nonumber&-\alpha[b_{CR}(u_h;u_h,w)-b_{CR}(v_h;v_h,w)]
			\\&=\frac{-\alpha}{\delta+2}\sum_{K\in \mathcal{T}_h}\sum_{i=1}^d  \left(\int_K\left(u_h^{\delta}\frac{\partial u_h}{\partial_{x_i}}-v_h^{\delta}\frac{\partial v_h}{\partial_{x_i}}\right) w \mathrm{d}x- \int_K(u_h^{\delta+1}-v_h^{\delta+1})\frac{\partial w}{\partial{x_i}} \mathrm{d}x\right)\nonumber\\
			&\leq \frac{\nu}{2}\|\nabla_h w\|_{\L^2(\mathcal{T}_h)}^2+C(\alpha,\nu)\left(\|u_h\|^{\frac{8\delta}{4-d}}_{\L^{4\delta}}+\|v_h\|^{\frac{8\delta}{4-d}}_{\L^{4\delta}}\right)\|w\|_{\L^2}^2,
	\end{align}
where $C(\alpha, \nu) = \left(\frac{4+d}{4\nu}\right)^{\frac{4+d}{4-d}}\left(\frac{4-d}{8}\right)(\frac{2^{\delta-1}C\alpha}{(\delta+2)(\delta+1)})^{\frac{8}{4-d}}$. Now, we estimate the term $\beta(c(u_h)-c(v_h),w)$ as  
\begin{align}\label{2.24}
	\beta(c(u_h)-c(v_h),w)
	=-\beta\gamma\|w\|_{\L^2}^2-\beta(u_h^{2\delta+1}-v_h^{2\delta+1},w)+\beta(1+\gamma)(u_h^{\delta+1}-v_h^{\delta+1},w). 
\end{align}
Using (2.21) and (2.22) of \cite{GBHE},
gives 
\begin{align}\label{2.27}
	\beta&\left[(u_h(1-u_h^{\delta})(u_h^{\delta}-\gamma)-v_h(1-v_h^{\delta})(v_h^{\delta}-\gamma),w)\right] \nonumber\\
	&\qquad\qquad\leq -\beta\gamma\|w\|_{\L^2}^2-\frac{\beta}{4}\|u_h^{\delta}w\|_{\L^2}^2-\frac{\beta}{4}\|v_h^{\delta}w\|_{\L^2}^2+\frac{\beta}{2}2^{2\delta}(1+\gamma)^2(\delta+1)^2\|w\|_{\L^2}^2.
\end{align}
Combining \eqref{2.30}-\eqref{2.27}, we obtain 
\begin{align}
	&\nonumber A_{CR}(u_h,w) - A_{CR}(v_h,w)\\\nonumber&= \nu a_{CR}(u_h-v_h,w) + \alpha (b_{CR}(u_h,u_h,w)-b_{CR}(v_h,v_h,w))-\beta(c(u_h)-c(v_h),w)\\&\nonumber\geq \nu\|\nabla_h w\|_{\L^2(\mathcal{T}_h)}^2-\frac{\nu}{2}\|\nabla_h w\|_{\L^2(\mathcal{T}_h)}^2-C(\alpha,\nu)\left(\|u_h\|^{\frac{8\delta}{4-d}}_{\L^{4\delta}}+\|v_h\|^{\frac{8\delta}{4-d}}_{\L^{4\delta}}\right)\|w\|_{\L^2}^2
	\\
	&\quad+\left(\beta\gamma-\mathcal{C}(\beta,\gamma,\delta) - C(\alpha,\nu)\Big(\|u_h\|^{\frac{8\delta}{4-d}}_{\L^{4\delta}}+\|v_h\|^{\frac{8\delta}{4-d}}_{\L^{4\delta}}\Big)\right)\|w\|_{\L^2}^2,
\end{align}
where $\mathcal{C}(\beta,\gamma,\delta)= \frac{\beta}{2}2^{2\delta}(1+\gamma)^2(\delta+1)^2$, gives the required result.
\end{proof}
Next, we discuss the existence of a unique solution of the semi-discretized system.  
\begin{theorem}
	For  $f \in \L^2(0,T;\L^2(\Omega)), u_0\in \L^2(\Omega)$ there exist at least one solution $u_h\in V_h$. Moreover, for $u_h^0\in\L^{d\delta}(\Omega)$,  the weak solution to the system \eqref{2.ncweakform} is unique. 
\end{theorem}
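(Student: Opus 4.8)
The plan is to view \eqref{2.ncweakform} as a Volterra integro-differential system on the finite-dimensional space $V_h$, to obtain a local solution by a Carath\'eodory-type argument, to extend it to all of $[0,T]$ by means of the $h$-independent a priori bound of Lemma \ref{2.stabilitysd}, and finally to deduce uniqueness from the monotonicity-type estimate of Lemma \ref{2.crnclem111} together with the positivity \eqref{2.pk} of the kernel and Gronwall's inequality.

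For existence I would fix a basis $\{\phi_1,\dots,\phi_N\}$ of $V_h$, write $u_h(t)=\sum_{j=1}^N\xi_j(t)\phi_j$, and, taking $\chi=\phi_i$ in \eqref{2.ncweakform}, reduce the problem to
\[
\dot{\bm\xi}(t)=\mathbf{F}\bigl(t,\bm\xi(t)\bigr)+\int_0^tK(t-\tau)\,\mathbf{G}\,\bm\xi(\tau)\,\mathrm{d}\tau,\qquad\bm\xi(0)=\bm\xi^0,
\]
where the mass matrix $(\phi_j,\phi_i)$ has been inverted, $\mathbf{G}$ encodes $(\nabla_h\phi_j,\nabla_h\phi_i)$, and $\mathbf{F}$ collects the diffusion term, the polynomial nonlinearities $b_{CR}$ and $c$, and the load $(f(t),\phi_i)$. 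Here $\bm\xi\mapsto\mathbf{F}(t,\bm\xi)$ is polynomial, $t\mapsto\mathbf{F}(t,\bm\xi)$ is measurable and locally dominated by an $\L^1(0,T)$ function because $f\in\L^2(0,T;\L^2(\Omega))$, and $K\in\L^1(0,T)$ makes the Volterra term a continuous self-map of $C([0,t_*];\RR^N)$; a standard Carath\'eodory/fixed-point argument then gives an absolutely continuous solution on some $[0,t_*]$. Since the estimate \eqref{2.13} holds on every subinterval of existence and bounds $\sup_t\|u_h(t)\|_{\L^2}$ by the data only, equivalence of norms on $V_h$ rules out finite-time blow-up of $\bm\xi$, and a continuation argument extends the solution to $[0,T]$; this yields at least one $u_h\in V_h$.

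For uniqueness I would take two solutions $u_h,v_h$ of \eqref{2.ncweakform} with the same initial datum $u_h^0$, set $w=u_h-v_h$, subtract the equations, and choose $\chi=w(t)$ (legitimate since $u_h,v_h$ are absolutely continuous in time into the finite-dimensional $V_h$), obtaining for a.e.\ $t$
\[
\tfrac12\tfrac{\mathrm{d}}{\mathrm{d}t}\|w(t)\|_{\L^2}^2+\bigl(A_{CR}(u_h,w)-A_{CR}(v_h,w)\bigr)+\eta\bigl((K*\nabla_hw)(t),\nabla_hw(t)\bigr)=0.
\]
Integrating over $(0,t)$, the memory term is nonnegative by \eqref{2.pk} applied to each component of $\nabla_hw$, and the second bound of Lemma \ref{2.crnclem111} controls $A_{CR}(u_h,w)-A_{CR}(v_h,w)$ from below by $\frac{\nu}{2}\|\nabla_hw\|_{\L^2(\mathcal{T}_h)}^2$ plus the nonnegative reaction terms minus $\bigl(C(\beta,\gamma,\delta)+C(\alpha,\nu)(\|u_h\|^{8\delta/(4-d)}_{\L^{4\delta}}+\|v_h\|^{8\delta/(4-d)}_{\L^{4\delta}})\bigr)\|w\|_{\L^2}^2$; after discarding the nonnegative contributions and using $w(0)=0$ I expect
\[
\|w(t)\|_{\L^2}^2\le\int_0^tg(s)\,\|w(s)\|_{\L^2}^2\,\mathrm{d}s,\qquad g(s)=2\Bigl(C(\beta,\gamma,\delta)+C(\alpha,\nu)\bigl(\|u_h(s)\|^{\frac{8\delta}{4-d}}_{\L^{4\delta}}+\|v_h(s)\|^{\frac{8\delta}{4-d}}_{\L^{4\delta}}\bigr)\Bigr),
\]
so that $g\in\L^1(0,T)$ would give $w\equiv0$ by Gronwall's inequality.

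The hard part is the integrability of $g$, i.e.\ showing that $u_h,v_h\in\L^{8\delta/(4-d)}(0,T;\L^{4\delta}(\Omega))$, and this is exactly where the hypothesis $u_h^0\in\L^{d\delta}(\Omega)$ enters: I would derive the required a priori bound by testing \eqref{2.ncweakform} with a suitable power of $u_h$ and interpolating with the $\L^{2(\delta+1)}(0,T;\L^{2(\delta+1)})$ bound already present in \eqref{2.11} (on a fixed mesh it also follows at once from equivalence of norms on $V_h$ and the $\L^\infty(0,T;\L^2(\Omega))$ bound of Lemma \ref{2.stabilitysd}). A secondary technical point is verifying the Carath\'eodory hypotheses for the weakly singular Volterra term knowing only $K\in\L^1(0,T)$.
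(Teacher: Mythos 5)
Your proposal is correct and follows essentially the same route as the paper: existence via reduction to a finite-dimensional (Volterra-type) ODE system with locally Lipschitz/Carath\'eodory nonlinearities plus global continuation from the stability bound of Lemma \ref{2.stabilitysd}, and uniqueness via the energy identity, the lower bound of Lemma \ref{2.crnclem111}, the kernel positivity \eqref{2.pk}, and Gronwall. Your closing remark that the integrability of $\|u_h\|_{\L^{4\delta}}^{8\delta/(4-d)}$ is the delicate point is exactly the step the paper dispatches with the hypothesis $u_h^0\in\L^{d\delta}(\Omega)$ (and, as you note, it also follows on a fixed mesh from norm equivalence on $V_h$).
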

\begin{proof}
	\noindent\textbf{Step 1:} \emph{Existence.} For the existence of a discrete solution, we will show that the operators defined are Lipschitz and use the results of ODE as done in \cite[Theorem 3.2]{DHY}.
	 For $ u,v,z \in \H^1(\mathcal{T}_h)\cap \L^{2(\delta+1)}(\Omega),$ and $w=u-v$, by employing integration by parts and further using Taylor's formula for $0<\theta<1$, we achieve
	\begin{align}\label{2.bllc}
	\nonumber	\langle B(u)-B(v),z\rangle 
	& =\frac{1}{(\delta+2)}\Bigg[ -\delta\left(u^{\delta-1}\sum_{i=1}^{d}\frac{\partial u}{\partial x_i}w,z\right) -\left(u^{\delta}w,\sum_{i=1}^d\frac{\partial z}{\partial x_i}\right)\\&\quad+ \delta\left((\theta u+(1-\theta)v)^{\delta-1}\sum_{i=1}^{d}\frac{\partial v}{\partial x_i}w,z\right)\nonumber\\&\quad+ (\delta+1)\left((\theta u+(1-\theta)v)^{\delta}w,\sum_{i=1}^{d}\frac{\partial z}{\partial x_i}\right)\Bigg]\nonumber\\
	&\leq C\rho^{\delta}\|u-v\|_{\L^{2(\delta+1)}}\|w\|_{\L^{2(\delta+1)}}\|z\|_{\H^1(\mathcal{T}_h)\cap\L^{2(\delta+1)}},
\end{align}
	$\forall~ \|u\|_{\H^1(\mathcal{T}_h)\cap\L^{2(\delta+1)}}, \|v\|_{\H^1(\mathcal{T}_h)\cap\L^{2(\delta+1)}}\leq \rho$, where $\langle B(u),z\rangle = b_{CR}(u,u,z)$.
	
 Again, assume $u,v,z\in\L^{2(\delta+1)}(\Omega)$ such that $\|u\|_{\L^{2(\delta+1)}},\|v\|_{\L^{2(\delta+1)}}\leq \rho$  and $w = u-v$. For $0<\theta_2<1$ and $0<\theta_3<1$, an application of Taylor's formula and H\"older's inequality yields
	\begin{align}\label{2.cllc}
		\nonumber&\langle c(u)-c(v),z\rangle 
		\\&\leq C\rho \left((1+\gamma)(\delta+1)2^{\delta}|\Omega|^{\frac{\delta}{2(\delta+1)}}\rho^{\delta}+\gamma|\Omega|^{\frac{\delta}{\delta+1}}+(2\delta+1)2^{2\delta}\rho^{2\delta}\right)\|w\|_{\L^{2(\delta+1)}}\|z\|_{\L^{2(\delta+1)}},
	\end{align}
where $|\Omega|$ represents the Lebesgue measure of $\Omega$.
 Using the results discussed in  \cite[Theorem 3.1]{DHY}
with \eqref{2.bllc}-\eqref{2.cllc},  the discrete system \eqref{2.ncweakform} has a local solution. 
		\vskip 2mm
	\noindent\textbf{Step 2:} \emph{Uniqueness.}
For given $f(\cdot,\cdot)$ and $u^0_h(\cdot)$, let the discrete formulation \eqref{2.ncweakform} have two weak solutions, $u^1_h(\cdot)$ and $u^2_h(\cdot)$.
Then, $\omega=u^1_h-u^2_h$ satisfies: 
	\begin{align*}
		&\nonumber(\partial_t(u^1_h-u^2_h), \chi)+A_{CR}(u^1_h,\chi)-A_{CR}(u^2_h,\chi)+\eta\int_0^tK(t-\tau)a_{CR}((u^1_h-u^2_h)(\tau),\chi) ~\mathrm{d}\tau = 0,
	\end{align*}
	for $(x,t)\in\Omega\times(0,T)$. Using $\chi=\omega = u^1_h-u^2_h \in V_h$, we find 
	\begin{align}\label{2.36}
		\frac{1}{2}\frac{\mathrm{~d}}{\mathrm{~d}t}\|\omega\|_{\L^2}^2+A_{CR}(u^1_h,\omega)-A_{CR}(u^2_h,\omega)+\eta( (K*\nabla_h \omega),\nabla_h \omega)= 0.
	\end{align} 
Using Lemma \ref{2.crnclem111}, we have 
	\begin{align*}
		\frac{\mathrm{~d}}{\mathrm{~d}t}\|\omega(t)\|_{\L^2}^2+&\nu\|\nabla_h \omega(t)\|_{\L^2(\mathcal{T}_h)}^2+\frac{\beta}{2}\left(\|u^1_h(t)^{\delta}\omega(t)\|_{\L^2}^2 + \|u^2_h(t)^{\delta}\omega(t)\|_{\L^2}^2\right)+ \beta\gamma\|\omega(t)\|_{\L^2}^2\nonumber\\&
		+\eta( (K*\nabla_h \omega)(t),\nabla_h \omega(t))
		\leq \left(C(\beta,\alpha,\delta) + C(\alpha,\nu)\Big(\|u^1_h\|^{\frac{8\delta}{4-d}}_{\L^{4\delta}}+\|u^2_h\|^{\frac{8\delta}{4-d}}_{\L^{4\delta}}\Big)\right)\|\omega\|_{\L^2}^2. 
	\end{align*}
As a result of integrating the above inequality, ensuring the positivity of the kernel $K$, and subsequently applying Gronwall's inequality, we find:
	\begin{align*}
	\|\omega(t)\|_{\L^2}^2\leq\|\omega(0)\|_{\L^2}^2e^{ \beta 2^{2\delta}(1+\gamma)^2(\delta+1)^2T}\exp\left\{C(\alpha,\nu)\int_0^T\left(\|u^1_h(t)\|^{\frac{8\delta}{4-d}}_{\L^{4\delta}}+\|u^2_h(t)\|^{\frac{8\delta}{4-d}}_{\L^{4\delta}}\right)\mathrm{~d} t\right\} ,
\end{align*}
  $\forall~t\in[0,T]$. For $u^1_h(0)\in \L^{d\delta}(\Omega),$ the term in exponential is bounded. As $\omega(0)=0$ and $u^1_h$ and $u^2_h$ satisfies the system (\ref{2.ncweakform}), uniqueness follows easily.
\end{proof}
Subsequently,  we denote the usual finite element interpolation \cite{VJL} by $I_h$, such that
\begin{align}\label{2.ncapprox11}
	|v-I_hv|_{\H^m(K)}&\le Ch^{2-m}_K\|v\|_{\H^2(K)},\quad v\in \H^2(K),\\
\nonumber	\|v-(I_hv)\|_{\L^2(E)}&\le Ch^{3/2}\|v\|_{\H^2(K)},\quad v\in \H^2(K)\quad E\in\mathcal{E}(\mathcal{T}_h). \label{2.ncapprox12}
\end{align}
Concerning the edge projection operator denoted as  $P_E:L^2(E)\rightarrow P_0(E)$, where $P_0(E)$ is a constant on $E$, we have 
\begin{align}\label{2.P0projection}
	\|v-P_E v\|_{\L^2(E)}\le Ch^{1/2}_K|v|_{\H^1(K)},\forall \  v\in \H^1(K),\ E\in\mathcal{E}(\mathcal{T}_h).
\end{align}
\begin{theorem}\label{2.SDNCFEM}
	Assume that $u$  and $u_h$ be the weak solutions of \eqref{2.weakform} and \eqref{2.ncweakform} on the interval $(0,T]$ respectively. If we assume initial data $u_0\in X_{d\delta}$ and the forcing  $f\in\mathrm{L}^2(0,T;\L^2(\Omega))$, then the semi-discrete solution $u_h$ of the NCFEM tends to the exact solution $u$ as $h\rightarrow0$. Additionally, the following assertion  holds
	\begin{align*}
		\|u_h-u\|_{\L^{\infty}(0,T;\L^2(\Omega))}^2 +	|\!|\!|u_h-u|\!|\!|_{CR}^2 \leq C\bigg\{\|u^h_0-u_0\|_{\L^2}^2 + h^2~\Theta(u)\bigg\},
\end{align*}
	where the constant $C$ depends on parameters $\alpha,\beta,\gamma,\delta,$ but independent of $h$ and 
	$$\Theta(u) =  \int_0^T\|u(t)\|_{\H_0^1}^2\mathrm{~d} t +\int_0^T\|u(t)\|_{\H^2}^2\mathrm{~d} t+\int_0^T\|\partial_tu(t)\|^2_{\H_0^1}\mathrm{~d} t.\label{2.th}$$
\end{theorem}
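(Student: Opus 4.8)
\emph{Proof plan.} The strategy is the classical one: project the error onto $V_h$ through the Crouzeix--Raviart interpolant, derive an error equation, test it with the discrete part of the error, and close by Gronwall's inequality, using the monotonicity bound of Lemma~\ref{2.crnclem111} and the positivity \eqref{2.pk} of the kernel. First I would split $u_h-u=\theta+\rho$ with $\theta:=u_h-I_hu\in V_h$ and $\rho:=I_hu-u$. By \eqref{2.ncapprox11} one has $|\!|\!|\rho|\!|\!|_{CR}^2\le Ch^2\int_0^T\|u(t)\|_{\H^2}^2\,\mathrm{d}t$ and $\|\rho\|_{\L^{\infty}(0,T;\L^2)}^2=O(h^2)$, so by the triangle inequality it suffices to estimate $\|\theta\|_{\L^\infty(0,T;\L^2)}^2+\nu|\!|\!|\theta|\!|\!|_{CR}^2$. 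For the error equation, test the strong form $\mathcal L u=f$ elementwise against $\chi\in V_h$: integration by parts in $-\nu\Delta u$ and in the memory term produces edge contributions $\nu\sum_E\int_E(\nabla u\cdot n_E)[\chi]$ and $\eta\sum_E\int_E\big((K*\nabla u)\cdot n_E\big)[\chi]$, and rewriting the convective term in the skew form \eqref{2.fcr} produces a further edge term $\tfrac{\alpha}{\delta+2}\sum_E\int_E[\chi]\,u^{\delta+1}\sum_i n_i$; denote the sum of these by $\mathcal N_h(t;\chi)$, which vanishes whenever $\chi\in\H^1_0(\Omega)$. Subtracting the resulting identity for $u$ from \eqref{2.ncweakform}, substituting $u_h-u=\theta+\rho$, and splitting $A_{CR}(u_h,\chi)-A_{CR}(u,\chi)=\big(A_{CR}(u_h,\chi)-A_{CR}(I_hu,\chi)\big)+\big(A_{CR}(I_hu,\chi)-A_{CR}(u,\chi)\big)$ gives, for a.e.\ $t$,
\[
(\partial_t\theta,\chi)+\big(A_{CR}(u_h,\chi)-A_{CR}(I_hu,\chi)\big)+\eta\big((K*\nabla_h\theta)(t),\nabla_h\chi\big)=R_h(t;\chi),
\]
where $R_h(t;\chi)$ collects $-(\partial_t\rho,\chi)$, $-\eta\big((K*\nabla_h\rho)(t),\nabla_h\chi\big)$, $-\big(A_{CR}(I_hu,\chi)-A_{CR}(u,\chi)\big)$ and $-\mathcal N_h(t;\chi)$.

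Next, take $\chi=\theta$. On the left, $(\partial_t\theta,\theta)=\tfrac12\tfrac{\mathrm d}{\mathrm dt}\|\theta\|_{\L^2}^2$; Lemma~\ref{2.crnclem111}, applied with $u_h$ and $v_h=I_hu$ in $V_h$ and $w=\theta$, bounds
\[
A_{CR}(u_h,\theta)-A_{CR}(I_hu,\theta)\ \ge\ \tfrac{\nu}{2}\|\nabla_h\theta\|_{\L^2(\cT_h)}^2+\tfrac{\beta}{4}\big(\|u_h^{\delta}\theta\|_{\L^2}^2+\|(I_hu)^{\delta}\theta\|_{\L^2}^2\big)-\Lambda_h(t)\,\|\theta\|_{\L^2}^2,
\]
with $\Lambda_h(t)=\mathcal C(\beta,\gamma,\delta)-\beta\gamma+C(\alpha,\nu)\big(\|u_h\|_{\L^{4\delta}}^{\frac{8\delta}{4-d}}+\|I_hu\|_{\L^{4\delta}}^{\frac{8\delta}{4-d}}\big)$; and after integration in time the memory contribution $\eta\int_0^t\big((K*\nabla_h\theta)(s),\nabla_h\theta(s)\big)\mathrm ds$ is nonnegative by \eqref{2.pk} and may be dropped.

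It remains to bound $\int_0^tR_h(s;\theta)\,\mathrm ds$. The term $(\partial_t\rho,\theta)$ is handled by Cauchy--Schwarz, Poincar\'e and Young using the first-order $\L^2$ approximation of the CR interpolant of $\partial_t u\in\H^1_0(\Omega)$ (Theorem~\ref{2.weaksolution}, item 3): $(\partial_t\rho,\theta)\le\varepsilon\|\theta\|_{\L^2}^2+Ch^2\|\partial_t u\|_{\H^1_0}^2$. The convolution term $\eta\big((K*\nabla_h\rho)(t),\nabla_h\theta(t)\big)$ is controlled after time integration by Lemma~\ref{2.l1} together with $\|\nabla_h\rho\|_{\L^2(\cT_h)}^2\le Ch^2\|u\|_{\H^2}^2$ from \eqref{2.ncapprox11}. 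For $\mathcal N_h$ --- the genuinely non-conforming part --- one uses the edge condition $\int_E[\theta]=0$ to subtract the edge mean from each flux (and from $u^{\delta+1}$), then \eqref{2.P0projection} and a trace inequality, giving a bound $\le\varepsilon\|\nabla_h\theta\|_{\L^2(\cT_h)}^2+Ch^2\big(\|u\|_{\H^2}^2+\|u\|_{\H^1_0}^2\big)$ plus a $K$-convolution piece again handled via Lemma~\ref{2.l1}. Finally $A_{CR}(I_hu,\theta)-A_{CR}(u,\theta)$ splits into $\nu a_{CR}(\rho,\theta)\le\varepsilon\|\nabla_h\theta\|_{\L^2(\cT_h)}^2+Ch^2\|u\|_{\H^2}^2$ and the nonlinear/reaction differences $\alpha\big(b_{CR}(I_hu;I_hu,\theta)-b_{CR}(u;u,\theta)\big)-\beta\big(c(I_hu)-c(u),\theta\big)$, treated by the local Lipschitz estimates of the type \eqref{2.bllc}--\eqref{2.cllc} with $\rho$ in the role of the difference, the (time-uniform) $\L^{2(\delta+1)}$ bounds on $u$ and $I_hu$ from the regularity of Theorem~\ref{2.weaksolution} under $u_0\in X_{d\delta}$, $f\in\L^2(0,T;\L^2)$ and the stability of $I_h$, and \eqref{2.ncapprox11}; this yields $\le\varepsilon\big(\|\nabla_h\theta\|_{\L^2(\cT_h)}^2+\|\theta\|_{\L^2}^2\big)+Ch^2\big(\|u\|_{\H^1_0}^2+\|u\|_{\H^2}^2\big)$.

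Collecting the estimates, absorbing the $\varepsilon$-terms into $\tfrac{\nu}{2}\|\nabla_h\theta\|_{\L^2(\cT_h)}^2$, discarding the nonnegative contributions, and integrating in time with $\|\theta(0)\|_{\L^2}^2\le C\|u_h^0-u_0\|_{\L^2}^2+O(h^2)$ leads to
\[
\|\theta(t)\|_{\L^2}^2+\nu\int_0^t\|\nabla_h\theta(s)\|_{\L^2(\cT_h)}^2\,\mathrm ds\le C\big(\|u_h^0-u_0\|_{\L^2}^2+h^2\,\Theta(u)\big)+C\int_0^t\big(1+\widetilde\Lambda_h(s)\big)\|\theta(s)\|_{\L^2}^2\,\mathrm ds,
\]
with $\widetilde\Lambda_h(s)=\|u_h(s)\|_{\L^{4\delta}}^{\frac{8\delta}{4-d}}+\|I_hu(s)\|_{\L^{4\delta}}^{\frac{8\delta}{4-d}}$, after which Gronwall's inequality and the supremum over $t\in[0,T]$ finish the proof once the $\rho$-contribution is added back. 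The key point, and the main obstacle, is that the Gronwall factor $\exp\!\big(C\int_0^T(1+\widetilde\Lambda_h)\big)$ must be finite \emph{uniformly in $h$}; this is where the hypothesis $u_0\in X_{d\delta}$ enters: combining a discrete Gagliardo--Nirenberg/Sobolev inequality for CR functions with the a priori bound of Lemma~\ref{2.stabilitysd} (and the regularity of $u$ for the $I_hu$ term) shows $\widetilde\Lambda_h\in\L^1(0,T)$ with an $h$-independent bound, so the Gronwall constant depends only on the data and parameters. Since the right-hand side of the estimate then tends to $0$ as $h\to0$ whenever $u_h^0\to u_0$ in $\L^2(\Omega)$, we also obtain $u_h\to u$, which completes the argument.
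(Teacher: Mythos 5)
Your proposal is correct and follows essentially the same route as the paper: split the error through an approximant in $V_h$ (the paper's $W$, your $I_hu$, both ultimately estimated via \eqref{2.ncapprox11}), derive the elementwise consistency identity with the edge terms from the diffusion, memory and skew-symmetrized convection operators, test with the discrete error, invoke Lemma~\ref{2.crnclem111}, the kernel positivity \eqref{2.pk}, Lemma~\ref{2.l1} and the edge projection bound \eqref{2.P0projection}, and close with Gronwall. Your direct Cauchy--Schwarz treatment of $(\partial_t\rho,\theta)$ and your explicit remark on the $h$-uniform integrability of the Gronwall factor are, if anything, slightly cleaner than the paper's handling of the corresponding terms.
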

\begin{proof} 
	Applying triangle inequality gives
	$$|\!|\!|u_h-u|\!|\!|_{CR}   \leq |\!|\!|u_h -W|\!|\!|_{CR} + |\!|\!|W-u|\!|\!|_{CR}.$$
	Now, for the second term we have  $|\!|\!|W-u|\!|\!|_{CR} \leq C h,$ using $\H^1$projection (Ritz-Projection). So, our aim is to estimate $|\!|\!|u_h -W|\!|\!|_{CR} $.

	Using regularity result of Theorem \ref{2.weaksolution}, it holds
	\begin{align}\label{2.ncexact}
		\nonumber&(\partial_tu, \chi) +\nu a_{CR}(u,\chi) + \alpha b_{CR}(u,u,\chi) -\beta (c(u),\chi)+\eta\int_0^t K(t-\tau) a_{CR}(u(\tau),\chi) ~\mathrm{d}\tau \\&=(f,\chi)+\sum_{K\in\mathcal{T}_h}\int_{\partial K}\nu\frac{\partial u}{\partial n_K} \chi ~\mathrm{d}s +\sum_{K\in\mathcal{T}_h}\int_{\partial K}\eta  \left(K*\frac{\partial u}{\partial n_K}\right) \chi ~\mathrm{d}s - \frac{\beta}{\delta +2}\sum_{K\in\mathcal{T}_h}\int_{\partial K} u^{\delta+1}n^i\chi~\mathrm{d}s  , 
	\end{align}
	$\forall ~\chi\in V_h,$ where $n^i= (n_1, \ldots, n_d),$ denotes the outward unit normal vector.
	From \eqref{2.ncweakform} and \eqref{2.ncexact}, we have 
	\begin{align*}
		& (\partial_t(u_h(t)-u(t)), \chi) + A_{CR}(u_h(t),\chi) - A_{CR}(u(t),\chi)+\eta\int_0^t K(t -\tau)a_{CR}( u_h(\tau), \chi) \mathrm{~d}\tau \\&-\eta\int_0^t K(t -\tau)a_{CR}( u(\tau), \chi)\mathrm{~d}\tau \\&= - \sum_{K\in\mathcal{T}_h}\int_{\partial K}\nu\frac{\partial u}{\partial n_K} \chi  \mathrm{~d}s - \sum_{K\in\mathcal{T}_h}\int_{\partial K}\eta \left(K*\frac{\partial u}{\partial n_K}\right) \chi\mathrm{~d}s + \frac{\beta}{\delta +2}\sum_{K\in\mathcal{T}_h}\int_{\partial K} u^{\delta+1}n^i\chi~\mathrm{d}s.
	\end{align*}
	Let us choose $\chi = u_h - W $ and write  $u_h - u  = u_h- W +W -u $, where $\chi\in V_h$,
	\begin{align*}
		& \frac{1}{2}\frac{d}{dt}\|u_h(t)-W(t)\|^2_{\L^2}+ A_{CR}(u_h(t),u_h(t)-W(t))- A_{CR}(W(t),u_h(t)-W(t)) \\&+\eta\int_0^t K(t -\tau)a_{CR}( u_h(\tau)-W(\tau), u_h(t)-W(t)) \mathrm{~d}\tau \\& = -(\partial_t(W(t)-u(t)), \chi(t)) - (A_{CR}(W(t),\chi(t)) - A_{CR}(u(t),\chi(t)))
		\\&\quad- \eta\int_0^t K(t -\tau)a_{CR}( W(\tau)-u(\tau), \chi(t))\mathrm{~d}\tau  - \sum_{K\in\mathcal{T}_h}\int_{K}\nu\frac{\partial u}{\partial n_K} \chi \mathrm{~d}s \\&\quad - \sum_{K\in\mathcal{T}_h}\int_{K}\eta \left(K*\frac{\partial u(s)}{\partial n_K}\right) \chi(s)\mathrm{~d}s + \frac{\beta}{\delta +2}\sum_{K\in\mathcal{T}_h}\int_{\partial K} u^{\delta+1}(s)n^i\chi(s)~\mathrm{d}s.
	\end{align*}
	Using Lemma \ref{2.crnclem111}  for $w = u_h-W$, we have 
	\begin{align*}
		&\nonumber\frac{d}{dt}\|u_h(t)-W(t)\|^2_{\L^2} +2\nu\|\nabla_h (u_h(t)-W(t))\|_{\L^2(\mathcal{T}_h)}^2 +\frac{\beta}{2}(\|{u}^{\delta}_h(u_h-W)\|_{\L^2}^2+\|W^{\delta}(u_h-W)\|_{\L^2}^2)\\&\nonumber+2\eta\int_0^t K(t -\tau)a_{CR}( u_h(\tau)-W(\tau), u_h(t)-W(t)) \mathrm{~d}\tau\\
		& +2 \left(\beta\gamma-C(\beta,\alpha,\delta) - C(\alpha,\nu)\Big(\|u_h(t)\|^{\frac{8\delta}{4-d}}_{\L^{4\delta}}+\|W(t)\|^{\frac{8\delta}{4-d}}_{\L^{4\delta}}\Big)\right)\|u_h(t)-W(t)\|_{\L^2}^2\nonumber\\
		&\nonumber\leq -2\partial_t(W(t)-u(t),\chi(t)) 
		+\sum_{i=1}^4J_i \nonumber - 2\eta\int_0^t K(t -\tau)a_{CR}( W(\tau)-u_h(\tau), \chi(t)) \mathrm{d}\tau \\
&\quad-2 \int_{\partial K}\nu\frac{\partial u(s)}{\partial n_K} \chi(s)  \mathrm{~d}s-2 \sum_{K\in\mathcal{T}_h}\int_{\partial K}\eta \left(K*\frac{\partial u}{\partial n_K}\right)(s) \chi(s) \mathrm{~d}s \\&\quad+ \frac{\beta}{\delta +2}\sum_{K\in\mathcal{T}_h}\int_{\partial K} u^{\delta+1}(s)n^i\chi(s)~\mathrm{d}s,
	\end{align*}
	with 
	\begin{align*}
		J_1 &= (W(t)-u(t),\partial_t(u_h(t)-W(t))), \quad J_2= a_{CR}( W(\tau)-u(\tau), u_h(t)-W(t)),\\
		J_3 &= -\frac{2\alpha}{\delta+2}\sum_{K\in \mathcal{T}_h}\sum_{i=1}^d  \left(\int_K\left(W^{\delta}\frac{\partial W}{\partial_{x_i}}-u^{\delta}\frac{\partial u}{\partial_{x_i}}\right) (u_h - W) \mathrm{~d}x- \int_K(W^{\delta+1}-u^{\delta+1})\frac{\partial (u_h-W)}{\partial{x_i}} \mathrm{d}x\right), \\
		J_4 &= \beta\left[(W(1-W^{\delta})(W^{\delta}-\gamma)-u(1-u^{\delta})(u^{\delta}-\gamma),u_h-W)\right].
	\end{align*}
	Using \cite[Theorem 10.3.11]{SCB}, it follows: 
	\begin{align}
		\sum_{K\in\mathcal{T}_h}\int_{\partial K}\nu\frac{\partial u}{\partial n_K} \chi=\sum_{E\in\mathcal{E}}\int_{E}\nu\frac{\partial u}{\partial n_E} [\chi] 
		=\sum_{E\in\mathcal{E}}\int_{E}\nu\left(\frac{\partial u}{\partial n_E} -P\left(\frac{\partial u}{\partial n_E}\right)\right)[\chi].
	\end{align}
	Therefore, we can utilize the estimate (\ref{2.P0projection}), which yields 
	\begin{align*}
		\nonumber\left|\sum_{K\in\mathcal{T}_h}\int_{\partial K}\nu\frac{\partial u}{\partial n_K} \chi\right| &\le C \left(\sum_{K\in\mathcal{T}_h}\nu h_K^2\|u\|_{\H^2(K)}^2\right)^{1/2}\|\nabla_h \chi\|_{\L^2(\mathcal{T}_h)}\\&\le C \sum_{K\in\mathcal{T}_h}\nu h_K^2\|u\|_{\H^2(K)}^2 +\frac{\nu}{4} \|\nabla_h(u_h-W) \|^2_{\L^2(\mathcal{T}_h)},
	\end{align*}
	Again, using  \cite[Theorem 10.3.11]{SCB} and the Bramble-Hilbert lemma, we have 
	\begin{align*}
		&\frac{\beta}{\delta +2}\sum_{K\in\mathcal{T}_h}\int_{\partial K} u^{\delta+1}n^i \chi
		\leq C^2 \sum_{K\in\mathcal{T}_h}\nu h_K^2\|u^{\delta+1}\|_{\H^1(K)}^2 +\frac{\nu}{4} \|\nabla_h(u_h-W) \|^2_{\L^2(\mathcal{T}_h)},
	\end{align*}
Moreover, $J_1$ and  $J_2$ satisfies the following bound:
	\begin{align*}
		|J_1|&\leq
\|W-u\|_{\mathrm{L}^2}^2+\|\partial_t(u_h-W)\|_{\mathrm{L}^2}^2,\\|J_2|&
\leq\frac{4}{\nu}\|\nabla_h(W-u)\|_{\L^2(\mathcal{T}_h)}^2+\frac{\nu}{4}\|\nabla_h(u_h-W)\|_{\L^2(\mathcal{T}_h)}^2.
	\end{align*}
	To estimate $J_3,$ we first apply an integration by parts with the inverse inequality. Further, employing Taylor's formula with H\"older's and Young's inequalities yields
	\begin{align*}
		\nonumber	|J_3|
		&= -\frac{2\alpha}{\delta+2}\sum_{K\in \mathcal{T}_h}\sum_{i=1}^d  \left(\int_K\left(W^{\delta}\frac{\partial W}{\partial{x_i}}-u^{\delta}\frac{\partial u}{\partial{x_i}}\right) (u_h-W) \mathrm{~d}x- \int_K(W^{\delta+1}-u^{\delta+1})\frac{\partial (u_h-W)}{\partial{x_i}} \mathrm{d}x\right) 
		\\& \leq\frac{2^{2\delta}\alpha^2\delta^2}{\nu(\delta+2)^2}\left(\|{W}\|_{\L^{2(\delta+1)}}^{2\delta}+\|{u}\|_{\L^{2(\delta+1)}}^{2\delta}\right)\|\nabla_h(W-u)\|_{\L^2(\mathcal{T}_h)}^2+\frac{\nu}{2}\|\nabla_h(u_h-W)\|_{\L^2(\mathcal{T}_h)}^2.
	\end{align*}
	Let us rewrite $J_4$ as	$J_4= J_5 + J_6 + J_7,$ where 
\begin{align*}
	J_5 &= 2\beta(1+\gamma)({W}^{\delta+1}-u^{\delta+1},u_h-W),\quad  J_6 = -2\beta\gamma(W-u,u_h-W)\\ J_7 &= -2\beta({W}^{2\delta+1}-u^{2\delta+1},u_h-W).
\end{align*}
		The term $J_5$ can be estimated first using Taylor's formula, then H\"older's and finally Young's inequalities as
	\begin{align}
		|J_5|&=2\beta(1+\gamma)(\delta+1)((\theta W+(1-\theta)u)^{\delta}(W-u),u_h-W) 
		\nonumber\\&\leq 2^{2\delta-1}\beta(1+\gamma)^2(\delta+1)^2\left(\|{W}\|^{2\delta}_{\L^{4\delta}}+\|{u}\|^{2\delta}_{\L^{4\delta}}\right)\|u_h-W\|^2_{\L^2} + \frac{\beta}{2}\|\nabla_h(W-u)\|^2_{\L^2(\mathcal{T}_h)}.
	\end{align}
	We estimate $J_6$, by first using Cauchy-S\'chwarz inequality and then Young's inequality as
	\begin{align}
		|J_6|&\leq 2\beta\gamma\|W-u\|_{\L^2}\|u_h-W\|_{\L^2}\leq 2\beta\gamma\|W-u\|_{\L^2}^2+\frac{\beta\gamma}{2}\|u_h-W\|_{\L^2}^2.
	\end{align}
	Making use of Taylor's formula, H\"older, Young's inequality and the discrete Sobolev embedding, we estimate $J_7$ as
	\begin{align}\label{2.7p11}
		|J_7|&=-2(2\delta+1)\beta\left((\theta W+(1-\theta)u)^{2\delta}(W-u),u_h-W\right)
		\nonumber\\&\leq  2^{2\delta}(2\delta+1)\beta\left(\|W\|_{\L^{4\delta}}^{2\delta}+\|u\|_{\L^{4\delta}}^{2\delta}\right)\|W-u\|_{\L^{2d}}\|u_h-W\|_{\L^{\frac{2d}{d-1}}}	
		\nonumber\\&\leq 2^{2\delta-1}(2\delta+1)\beta\|\nabla_h(W-u)\|_{\L^2(\mathcal{T}_h)}^2 + \frac{\nu}{4}\|\nabla_h(u_h-W)\|_{\L^2(\mathcal{T}_h)}^2 
		\nonumber\\&\quad + \frac{2^{4\delta-2}(2\delta+1)^2\beta^2}{\nu}\left(\|W\|_{\L^{4\delta}}^{8\delta}+\|u\|_{\L^{4\delta}}^{8\delta}\right)\|u_h -W\|_{\L^2}^2.
	\end{align}
			Substituting back the above estimate, integrating from $0$ to $t$, using positivity of kernel and the estimates
			\begin{align*}
				&\eta\int_0^t\langle K*\nabla_h(W(s)-u(s)),\nabla_h(u_h(s)-W(s))\rangle\mathrm{~d}s\\
				&\leq \frac{\nu}{4}\int_0^t\|\nabla_h(W(s)-u(s))\|_{\mathrm{L}^2(\mathcal{T}_h)}^2\mathrm{~d}s+\frac{C_K\eta^2}{\nu}\int_0^t\|\nabla_h(u_h(s)-W(s))\|_{\mathrm{L}^2(\mathcal{T}_h)}^2\mathrm{~d}s,
			\end{align*}
			where the constant $C_K = \int_0^T|K(t)|\mathrm{~d}t$, and  
			\begin{align*}
				\left|\sum_{K\in\mathcal{T}_h}\int_0^t\int_{\partial K}\eta \left(K*\frac{\partial u}{\partial n_K}\right) \chi\right| 
				&\le C_{K}\frac{ \eta^2}{\nu} \sum_{K\in\mathcal{T}_h} h_K^2\int_0^t\|u\|_{\H^2(K)}^2 + \frac{\nu}{4}\int_0^t \|\nabla_h(u_h-W) \|^2_{\L^2(\mathcal{T}_h)},
			\end{align*} we obtain	 
			\begin{align}\label{2.715}
	&\|u_h(t)-W(t)\|_{\L^2}^2+\nu \int_0^t\|\nabla_h(u_h(s)-W(s))\|_{\L^2(\mathcal{T}_h)}^2\mathrm{~d}s+\int_0^t\|u_h(s)-W(s))\|_{\L^{2\delta+2}}^{2\delta+2}\mathrm{~d}s\nonumber
	\\&\nonumber\leq \|u_h^0-W(0)\|_{\L^2}^2-2(W(t)-u(t),u_h(t)-W(t))+2(W(0)-u_0,u_h(0)-W(0))	\\&\nonumber\quad+\int_0^t\|\partial_t(u_h(s)-W(s))\|_{\mathrm{L}^2}^2\mathrm{~d}s \nonumber +\int_0^t \bigg(\frac{4}{\nu}+\frac{2^{2\delta}\alpha^2\delta^2}{\nu(\delta+2)^2}\left(\|{W(s)}\|_{\L^{2(\delta+1)}}^{2\delta}+\|{u(s)}\|_{\L^{2(\delta+1)}}^{2\delta}\right)\\&\nonumber\quad + \frac{\beta}{2}+ 2^{2\delta-1}(2\delta+1)\beta\bigg)\|\nabla_h (W(s)-u(s))\|_{\L^2(\mathcal{T}_h)}^2\mathrm{~d}s
	 +\bigg(1+2\beta\gamma\bigg)\int_0^t\|W(s)-u(s)\|_{\mathrm{L}^2}^2 \mathrm{~d}s\\&\nonumber\quad  + C^2 \sum_{K\in\mathcal{T}_h}\nu h_K^2\int_0^t\|u^{\delta+1}\|_{\H^1(K)}^2 + \left(C^2 + C_{K}\frac{ \eta^2}{\nu} \right) \sum_{K\in\mathcal{T}_h} \nu h_K^2\int_0^t\|u\|_{\H^2(K)}^2\nonumber
	\\&\quad\nonumber+\int_0^t\bigg(2^{2\delta-1}\beta(1+\gamma)^2(\delta+1)^2\left(\|{W}\|^{2\delta}_{\L^{4\delta}}+\|{u}\|^{2\delta}_{\L^{4\delta}}\right) + \frac{\beta\gamma}{2} + C(\alpha,\nu)\Big(\|u(s)\|^{\frac{8\delta}{4-d}}_{\L^{4\delta}}+\|W(s)\|^{\frac{8\delta}{4-d}}_{\L^{4\delta}}\Big) \\&\quad+ C(\beta,\alpha,\delta) \frac{2^{4\delta-2}(2\delta+1)^2\beta^2}{\nu}\left(\|W(s)\|_{\L^{4\delta}}^{8\delta}+\|u(s)\|_{\L^{4\delta}}^{8\delta}\right)   \bigg) \|u_h(s)-W(s)\|^2_{\L^2}\mathrm{~d}s,  \qquad \forall ~t\in[0,T].
\end{align}	
		Using Cauchy-S\'chwarz and AM-GM inequality it follows that
			\begin{align*}
				-2(W-u,u_h-W)&
\leq\frac{1}{2}\|W-u\|_{\L^2}^2+2\|u_h-W\|_{\L^2}^2,\\
				2(W(0)-u_0,u_h(0)-W(0))&
\leq\|W(0)-u_0\|_{\L^2}^2+\|u_h(0)-W(0)\|_{\L^2}^2.
			\end{align*}
			Substituting back in \eqref{2.715}, applying Gronwall's inequality and the bounds for interpolation  \eqref{2.ncapprox11} leads to the stated result.
		\end{proof}
\subsubsection{Fully-discrete non-conforming FEM}\label{sec2.2.2}
This section deals with the fully-discrete finite element scheme in both space and time. The time interval $[0,T]$ is partitioned  into, $0 =t_0<t_1<t_2,\cdots < t_N = T$ with the uniform time stepping $\Delta t$. Then, we the apply backward Euler method to discretize  the time derivative. Moreover, the memory term is approximated by the positive implicit quadrature rule as:
\begin{align*}
	J(\psi) = \int_0^{t} K(t-s)\psi(s) \mathrm{d}s \approx	\frac{1}{(\Delta t)^2} \int_{t_{k-1}}^{t_k}\int_0^t K(t-s)\Delta t \psi(s)\mathrm{~d}s\mathrm{d}t  =  \sum_{j=1}^k \omega_{kj}\Delta t \psi^j,
\end{align*}
where $\omega_{k j}=\frac{1}{(\Delta t)^2 } \int_{t_{k-1}}^{t_k} \int_{t_{j-1}}^{\min \left(t, t_j\right)} K(t-s) \mathrm{~d}s \mathrm{~d}t$, for $1\leq k\leq N$ and $\psi^j = \psi(t_j)$ in $(t_{j-1},t_{j})$.
The fully-discrete weak formulation of the system (\ref{2.GBHE}) reads as: Given $u_h^{k-1}$, find $ u_h^k\in V_h$ such that
	\begin{align}\label{2.ncweakformfd}
	\nonumber	(\bar{\partial}u_h^k, \chi)+ A_{CR}(u_h^k,\chi)+\eta \left(\sum_{j=1}^{k}\omega_{kj}\Delta t\nabla_h u_h^{j}, \nabla_h \chi \right)= (f^k,\chi),\\
		(u_h(0),\chi)=(u_h^0,\chi), 
	\end{align}
for $\chi\in V_h$, where,  $u^0_h$ is the approximation of $u_0$ in $V_h$, 
$f^k = (\Delta t)^{-1}\int_{t_{k-1}}^{t_k} f(s)  \mathrm{~d}s,$ for $f\in \L^2(0,T;\L^2(\Omega))$, 
\[\bar{\partial}u_h^k= \frac{u_h^k-u_h^{k-1}}{\Delta t},\quad a_{CR}(u,v) = (\nabla_h u, \nabla_h v), \]
and the associated discrete energy norm is defined as,  
$|\!|\!|{v^k}|\!|\!|_{CR} 
	 := \Delta t \sum\limits_{k=1}^N\|\nabla_h v^k\|_{\L^2(\mathcal{T}_h)}^2$.
 We then define the fully-discrete finite element approximation solution for $t\in [t_{k-1},t_k]$  by
\begin{align}\label{2.gfds}
	u_{kh} |_{[t_{k-1},t_k]} = u_h^{k-1}+\left(\frac{t-t_{k-1}}{\Delta t}\right)(u_h^{k}-u_h^{k-1}), \qquad 1\leq k\leq N.
\end{align}

\begin{lemma}\label{2.lemma3.1}
	Let us define the set  $\{f^k\}_{k=1}^N$ by $f^k = (\Delta t)^{-1}\int_{t_{k-1}}^{t_k} f(s)  \mathrm{d}s$.
	If $f\in \L^2(0,T;\L^{2}(\Omega)),$ then we have
	\begin{align*}
		\Delta t \sum_{k=1}^N\|f^k\|_{\L^{2}}^2 \leq C \|f\|^2_{\L^2(0,T;\L^{2}(\Omega))}\quad\mbox{and}\quad
		\sum_{k=1}^N\int_{t_{k-1}}^{t_k}\|f^k-f(t)\|_{\L^{2}}^2 \mathrm{d}t \rightarrow 0 \quad \Delta t \rightarrow 0.
	\end{align*}
	Further if $f\in H^{\epsilon}(0,T;\L^{2}(\Omega))$ for some $\epsilon\in [0,1]$, then 
	\begin{align*}
		\sum_{k=1}^N\int_{t_{k-1}}^{t_k}\|f^k-f(t)\|_{\L^{2}}^2\mathrm{d}t \leq C (\Delta t)^{2\epsilon}\|f\|^2_{\H^{\epsilon}(0,T;\L^{2}(\Omega))}.
	\end{align*}
\end{lemma}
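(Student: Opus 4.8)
\emph{Proof strategy.} The key observation is that the map $f\mapsto\{f^k\}_{k=1}^N$ acts, on each subinterval $[t_{k-1},t_k]$, as the $\L^2$-orthogonal projection of $f(\cdot)$ onto the constants, so all three claims reduce to elementary properties of averaging operators. For the \emph{first estimate} I would apply the Cauchy--Schwarz (Jensen) inequality in time to each average,
\[
\|f^k\|_{\L^2}^2=\Bigl\|\tfrac{1}{\Delta t}\int_{t_{k-1}}^{t_k}f(s)\,\mathrm{d}s\Bigr\|_{\L^2}^2\le\frac{1}{\Delta t}\int_{t_{k-1}}^{t_k}\|f(s)\|_{\L^2}^2\,\mathrm{d}s,
\]
multiply by $\Delta t$ and sum over $k$; the right-hand side telescopes to $\|f\|^2_{\L^2(0,T;\L^2(\Omega))}$, giving the claim with $C=1$. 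In particular the piecewise-constant extension of $\{f^k\}$ is a contraction on $\L^2(0,T;\L^2(\Omega))$, a fact I will reuse below.

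\emph{The convergence claim} is a density argument. If $g\in C([0,T];\L^2(\Omega))$, uniform continuity gives $\max_{k}\sup_{t\in[t_{k-1},t_k]}\|g^k-g(t)\|_{\L^2}\to0$, hence $\sum_{k}\int_{t_{k-1}}^{t_k}\|g^k-g(t)\|_{\L^2}^2\,\mathrm{d}t\le T\bigl(\max_{k}\sup_{t\in[t_{k-1},t_k]}\|g^k-g(t)\|_{\L^2}\bigr)^2\to0$ as $\Delta t\to0$. For general $f\in\L^2(0,T;\L^2(\Omega))$ and $\varepsilon>0$, choose $g\in C([0,T];\L^2(\Omega))$ with $\|f-g\|_{\L^2(0,T;\L^2(\Omega))}<\varepsilon$; decomposing $f^k-f=(f^k-g^k)+(g^k-g)+(g-f)$ on each subinterval and using the contraction property to control the first summand, the triangle inequality in $\L^2(0,T;\L^2(\Omega))$ yields $\bigl(\sum_{k}\int_{t_{k-1}}^{t_k}\|f^k-f(t)\|_{\L^2}^2\,\mathrm{d}t\bigr)^{1/2}\le2\varepsilon+o(1)$, and letting $\Delta t\to0$ and then $\varepsilon\to0$ finishes it.

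\emph{The fractional estimate.} For $\epsilon=0$ it is immediate from the first estimate and the triangle inequality. For $\epsilon=1$, I would write, for $t\in[t_{k-1},t_k]$, $f^k-f(t)=\tfrac{1}{\Delta t}\int_{t_{k-1}}^{t_k}\!\int_t^s\partial_rf(r)\,\mathrm{d}r\,\mathrm{d}s$, so that $\|f^k-f(t)\|_{\L^2}\le\int_{t_{k-1}}^{t_k}\|\partial_rf(r)\|_{\L^2}\,\mathrm{d}r$; squaring, applying Cauchy--Schwarz in $r$, integrating over $t\in[t_{k-1},t_k]$ and summing over $k$ produces the bound $(\Delta t)^2\|\partial_tf\|^2_{\L^2(0,T;\L^2(\Omega))}$. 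For $\epsilon\in(0,1)$ I would invoke the Gagliardo--Slobodeckij characterisation of $\H^\epsilon(0,T;\L^2(\Omega))$: starting from
\[
\int_{t_{k-1}}^{t_k}\|f^k-f(t)\|_{\L^2}^2\,\mathrm{d}t\le\frac{1}{\Delta t}\int_{t_{k-1}}^{t_k}\!\int_{t_{k-1}}^{t_k}\|f(s)-f(t)\|_{\L^2}^2\,\mathrm{d}s\,\mathrm{d}t,
\]
and using $|s-t|\le\Delta t$ on $[t_{k-1},t_k]^2$ to replace $\tfrac{1}{\Delta t}$ by $(\Delta t)^{2\epsilon}|s-t|^{-(1+2\epsilon)}$, summation over $k$ bounds the total by $(\Delta t)^{2\epsilon}$ times the squared Gagliardo seminorm of $f$, hence by $C(\Delta t)^{2\epsilon}\|f\|^2_{\H^\epsilon(0,T;\L^2(\Omega))}$; alternatively one interpolates the operator bounds at the endpoints $\epsilon=0$ and $\epsilon=1$.

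The only point that requires genuine care is extracting the sharp fractional power $(\Delta t)^{2\epsilon}$ for non-integer $\epsilon$, which forces the use of either the Gagliardo seminorm of $\H^\epsilon(0,T;\L^2(\Omega))$ or an operator-interpolation argument; everything else is a routine manipulation of time averages together with the standard density of $C([0,T];\L^2(\Omega))$ in $\L^2(0,T;\L^2(\Omega))$.
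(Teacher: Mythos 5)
Your proposal is correct in all three parts. Note, however, that the paper does not actually prove this lemma: its ``proof'' is a one-line citation to Lemma~3.2 of the reference [LSW], so your self-contained argument supplies what the paper omits rather than diverging from an argument the paper gives. Each of your steps is the standard one: Jensen's inequality gives the stability bound with $C=1$ (and hence the contraction property of the piecewise-constant averaging operator on $\L^2(0,T;\L^2(\Omega))$); the density of $C([0,T];\L^2(\Omega))$ in $\L^2(0,T;\L^2(\Omega))$ combined with that contraction gives the qualitative convergence; and the endpoint computations at $\epsilon=0$ and $\epsilon=1$ together with either the Gagliardo--Slobodeckij seminorm estimate or operator interpolation give the rate $(\Delta t)^{2\epsilon}$. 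The only point worth making explicit is the one you already flag: for $\epsilon\in(0,1)$ your argument bounds the error by $(\Delta t)^{2\epsilon}$ times the squared Gagliardo seminorm of $f$ over $(0,T)$, so you need the (standard, for a bounded interval) equivalence of that seminorm with the $\H^{\epsilon}(0,T;\L^2(\Omega))$ norm used in the statement; the constant $C$ then absorbs the equivalence constant, which is where the generic $C$ in the lemma comes from. With that remark your proof is complete and, if anything, more informative than the paper's citation.
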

\begin{proof}
The above result has been proven in  \cite[Lemma 3.2]{LSW}.
\end{proof}
 The stability estimate for the fully-discrete approximation \eqref{2.ncweakformfd} is given as
\begin{lemma}[Stability]\label{2.4.1}
	 Let $\{u_h^k\}_{k=1}^N \subset V_h$ be defined by \eqref{2.ncweakformfd}. Assume that, $u_h^0 \in V_h$ and the forcing $f\in \L^2(0,T;\L^{2}(\Omega)),$ then we have  
	\[ |\!|\!|u_h^k|\!|\!|_{CR}\leq C(f,u_0) \times e^{2T\beta(1+\gamma)^2}.\]
\end{lemma}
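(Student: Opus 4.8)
The plan is to lift the semi-discrete energy argument of Lemma~\ref{2.stabilitysd} to the fully-discrete level, replacing the differential inequality by a difference inequality and Gronwall's inequality by its discrete analogue. First I would test \eqref{2.ncweakformfd} with $\chi=u_h^k$. Since $b_{CR}(u_h^k;u_h^k,u_h^k)=0$ by the construction \eqref{2.fcr}, the convective term drops out; expanding $A_{CR}$, writing $c(u)=(1+\gamma)u^{\delta+1}-\gamma u-u^{2\delta+1}$, and using the elementary identity $2(\bar{\partial}u_h^k,u_h^k)\geq\frac{1}{\Delta t}\big(\|u_h^k\|_{\L^2}^2-\|u_h^{k-1}\|_{\L^2}^2\big)$ (from $2a(a-b)=a^2-b^2+(a-b)^2$), multiplication by $2\Delta t$ gives
\begin{align*}
&\|u_h^k\|_{\L^2}^2-\|u_h^{k-1}\|_{\L^2}^2+2\nu\Delta t\|\nabla_h u_h^k\|_{\L^2(\mathcal{T}_h)}^2+2\beta\gamma\Delta t\|u_h^k\|_{\L^2}^2+2\beta\Delta t\|u_h^k\|_{\L^{2(\delta+1)}}^{2(\delta+1)}+2\eta\Delta t\,\mathcal{M}_k\\
&\quad\leq 2\beta(1+\gamma)\Delta t\big((u_h^k)^{\delta+1},u_h^k\big)+2\Delta t(f^k,u_h^k),
\end{align*}
where $\mathcal{M}_k:=\big(\sum_{j=1}^{k}\omega_{kj}\Delta t\,\nabla_h u_h^{j},\nabla_h u_h^k\big)$ is the quadrature memory contribution.

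Next I would dispose of the right-hand side. For the reaction part, H\"older's and Young's inequalities give $2\beta(1+\gamma)\Delta t\big((u_h^k)^{\delta+1},u_h^k\big)\leq 2\beta(1+\gamma)\Delta t\|u_h^k\|_{\L^{2(\delta+1)}}^{\delta+1}\|u_h^k\|_{\L^2}\leq 2\beta\Delta t\|u_h^k\|_{\L^{2(\delta+1)}}^{2(\delta+1)}+\tfrac{\beta(1+\gamma)^2}{2}\Delta t\|u_h^k\|_{\L^2}^2$, cancelling the $\L^{2(\delta+1)}$-term on the left; for the forcing, Cauchy--Schwarz, the discrete Poincar\'e--Friedrichs inequality on $V_h$ and Young's inequality give $2\Delta t(f^k,u_h^k)\leq\nu\Delta t\|\nabla_h u_h^k\|_{\L^2(\mathcal{T}_h)}^2+C\Delta t\|f^k\|_{\L^2}^2$, the gradient part being absorbed on the left. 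Summing over $k=1,\dots,n$ for arbitrary $1\leq n\leq N$ telescopes the first two terms, and Lemma~\ref{2.lemma3.1} gives $\sum_{k=1}^{n}\Delta t\|f^k\|_{\L^2}^2\leq C\|f\|_{\L^2(0,T;\L^2(\Omega))}^2$. It then remains only to show the memory sum $\sum_{k=1}^{n}\Delta t\,\mathcal{M}_k$ is nonnegative.

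This positivity of the discrete memory term is the main obstacle, and it is precisely where the ``positive implicit quadrature'' is used: it is the discrete counterpart of the hypothesis \eqref{2.pk}. Let $\bar g$ denote the $\L^2(\Omega;\mathbb{R}^d)$-valued piecewise-constant-in-time function with $\bar g(\cdot,s)=\nabla_h u_h^{j}$ for $s\in(t_{j-1},t_j)$; unwinding $\omega_{kj}\Delta t=\frac{1}{\Delta t}\int_{t_{k-1}}^{t_k}\int_{t_{j-1}}^{\min(t,t_j)}K(t-s)\,\mathrm{d}s\,\mathrm{d}t$ and using that the intervals $(t_{j-1},\min(t,t_j))$, $j=1,\dots,k$, partition $(0,t)$ for $t\in(t_{k-1},t_k)$, one obtains
\begin{align*}
\sum_{k=1}^{n}\Delta t\,\mathcal{M}_k=\int_{\Omega}\int_{0}^{t_n}\Big(\int_{0}^{t}K(t-s)\,\bar g(x,s)\,\mathrm{d}s\Big)\cdot\bar g(x,t)\,\mathrm{d}t\,\mathrm{d}x\geq 0,
\end{align*}
the nonnegativity following by applying \eqref{2.pk} (with $T=t_n$) to each scalar component of $\bar g(x,\cdot)\in\L^2(0,t_n)$ and integrating over $\Omega$.

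With this in hand the accumulated inequality reads, for every $n$,
\begin{align*}
\|u_h^n\|_{\L^2}^2+\nu\sum_{k=1}^{n}\Delta t\|\nabla_h u_h^k\|_{\L^2(\mathcal{T}_h)}^2\leq\|u_h^0\|_{\L^2}^2+C\|f\|_{\L^2(0,T;\L^2(\Omega))}^2+\tfrac{\beta(1+\gamma)^2}{2}\sum_{k=1}^{n}\Delta t\|u_h^k\|_{\L^2}^2,
\end{align*}
and the discrete Gronwall inequality (valid once $\Delta t$ is small enough that $\tfrac{\beta(1+\gamma)^2}{2}\Delta t<1$, with $\sum_{k=1}^{n}\Delta t=t_n\leq T$) yields $\|u_h^n\|_{\L^2}^2+\nu\sum_{k=1}^{n}\Delta t\|\nabla_h u_h^k\|_{\L^2(\mathcal{T}_h)}^2\leq\big(\|u_h^0\|_{\L^2}^2+C\|f\|_{\L^2(0,T;\L^2(\Omega))}^2\big)e^{2T\beta(1+\gamma)^2}$. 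Taking $n=N$ and dividing by $\nu$ gives exactly $|\!|\!|u_h^k|\!|\!|_{CR}\leq C(f,u_0)\,e^{2T\beta(1+\gamma)^2}$ with $C(f,u_0)=\nu^{-1}\big(\|u_h^0\|_{\L^2}^2+C\|f\|_{\L^2(0,T;\L^2(\Omega))}^2\big)$; the slightly generous exponent $2T\beta(1+\gamma)^2$ just absorbs the crude constants produced by Young's inequality and the factor $1/(1-\lambda_k)$ in discrete Gronwall.
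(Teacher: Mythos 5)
Your proof is correct and follows essentially the same route as the paper: test \eqref{2.ncweakformfd} with $\chi=u_h^k$, exploit $b_{CR}(u_h^k;u_h^k,u_h^k)=0$ and the bound \eqref{2.7}, absorb the reaction and forcing contributions by Young's inequality (with the discrete Poincar\'e inequality for the forcing), sum over $k$, drop the memory term by positivity, and close with the discrete Gronwall inequality. The only substantive difference is that you establish the nonnegativity of the discrete memory sum explicitly, by unwinding the weights $\omega_{kj}$ into the continuous convolution of the piecewise-constant-in-time gradient and applying \eqref{2.pk} componentwise, whereas the paper simply cites the positivity of the implicit quadrature rule from the literature; your derivation is a correct, self-contained substitute for that citation.
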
  
\begin{proof}
		Taking $\chi = u_h^k$ in \eqref{2.ncweakformfd}, for $f\in \L^2(0,T;\L^2(\Omega))$,  we achieve
	\begin{align*}
		&\frac{1}{2\Delta t}\|u_h^k\|_{\L^2}^2 - \frac{1}{2\Delta t}\|u_h^{k-1}\|_{\L^2}^2 + \frac{1}{2\Delta t}\|u_h^k-u_h^{k-1}\|_{\L^2}^2+\nu a_{CR}(u_h^k,u_h^k)+ \alpha b_{CR}(u_h^k,u_h^k,u_h^k) \\&+\eta \left(\sum_{j=1}^{k}\kappa_{k-j}\nabla_h u_h^{j} ,\nabla_h u_h^k\right)=\beta(u_h^k(1-(u_h^k)^{\delta})((u_h^k)^{\delta}-\gamma),u_h^k)+( f^k,u_h^k).
	\end{align*}
	Using Cauchy Schwarz, Young’s inequality and the estimate \eqref{2.7}, we achieve
	\begin{align*}
		&\frac{1}{2\Delta t}\|u_h^k\|_{\L^2}^2 - \frac{1}{2\Delta t}\|u_h^{k-1}\|_{\L^2}^2 + \frac{1}{2\Delta t}\|u_h^k-u_h^{k-1}\|_{\L^2}^2+\nu \|\nabla_h u_h^k\|_{\L^2(\mathcal{T}_h)}^2+ \beta\gamma\|u_h^k\|_{\L^2}^2+\beta\|u_h^k\|_{\L^{2(\delta+1)}}^{2(\delta+1)}\\&+\eta \left(\sum_{j=1}^{k}\kappa_{k-j}\nabla_h u_h^{j} ,\nabla_h u_h^k\right)\leq  \frac{\beta}{2}\| u_h^k\|^{2(\delta+1)}_{\L^{2(\delta+1)}}+\frac{\beta(1+\gamma)^2}{2}\|u_h^k\|_{\L^2}^2+ \frac{C_{\Omega}}{\nu}\|f^k\|_{\L^2}^2 + \frac{\nu}{4} \|\nabla u_h^k\|_{\L^2}^2.
	\end{align*}
	where $C_{\Omega}$ is a constant depending on the domain $\Omega$. Summing over $k,$ $k = 1, 2, \cdots, N$, we have 
	\begin{align*}
		&\frac{1}{2\Delta t}\|u_h^N\|_{\L^2}^2+\frac{\nu}{2} \sum_{k=1}^N\|\nabla_h u_h^k\|_{\L^2(\mathcal{T}_h)}^2\leq  \frac{1}{2\Delta t}\|u_h^0\|_{\L^2}^2 + \frac{\beta(1+\gamma)^2}{2}\sum_{k=1}^N\|u_h^k\|_{\L^2}^2 + \frac{C_{\Omega}}{2\nu\Delta t }\Delta t\sum_{k=1}^N\|f^k\|_{\L^2}^2. 
	\end{align*}
where we have used the positivity of the Kernel $K(t)$  \cite[Lemma 4.7]{WVt}. 
Finally, using discrete Gronwall inequality \cite[Lemma 9]{AKP}, we obtain the required bound.
\end{proof}
\begin{lemma}\label{2.thm7.2} Let $\delta \in [1,\infty)$, for $d=2,$ and $\delta\in[1,2]$ for $d=3$. If
 $u_0\in \H^2(\Omega)\cap \H_0^1(\Omega)$ and  $f\in \H^1(0,T;\L^2(\Omega))$,
then the following assertion holds:
	\begin{align*}
	\nonumber\|u_h - u_{kh}\|_{\L^{\infty}(0,T;\L^2(\Omega))}^2 +|\!|\!|u_h - u_{kh}|\!|\!|_{CR}^2&\leq C(f,u_0)\Big((\Delta t)^2+ \eta^2(\Delta t)^2\sup_{k,j} \omega_{kj}^2\Big).
\end{align*}
\end{lemma}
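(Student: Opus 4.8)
The estimate is a purely temporal error bound, so I would compare the semi-discrete solution $u_h(t)$ directly with the piecewise-linear-in-time reconstruction $u_{kh}$ of the iterates $u_h^k$. Set $\theta^k:=u_h(t_k)-u_h^k\in V_h$ and let $\Theta(t)$ be its continuous, piecewise-linear interpolant on the time grid; writing $\Pi_{\Delta t}u_h$ for the nodal time-interpolant of $u_h$, I would split $u_h-u_{kh}=(u_h-\Pi_{\Delta t}u_h)+\Theta$. The first summand is bounded by one-dimensional interpolation estimates, $\|u_h-\Pi_{\Delta t}u_h\|_{\L^{\infty}(0,T;\L^2)}\le C\Delta t\,\|\partial_tu_h\|_{\L^{\infty}(0,T;\L^2)}$ and $|\!|\!|u_h-\Pi_{\Delta t}u_h|\!|\!|_{CR}\le C\Delta t\,\|\nabla_h\partial_tu_h\|_{\L^2(0,T;\L^2)}$; here I would first establish the uniform-in-$h$ bound $\|\partial_tu_h\|_{\L^{\infty}(0,T;\L^2)}+\|\nabla_h\partial_tu_h\|_{\L^2(0,T;\L^2)}\le C(f,u_0)$ by differentiating \eqref{2.ncweakform} in time and testing with $\partial_tu_h$ (exploiting the skew structure of $b_{CR}$), which is the discrete counterpart of part (3) of Theorem \ref{2.weaksolution} and explains the hypotheses $u_0\in\H^2\cap\H_0^1$, $f\in\H^1(0,T;\L^2)$. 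Since $\Theta(t_k)=\theta^k$ and $\Theta$ is piecewise linear, it then suffices to bound $\max_k\|\theta^k\|_{\L^2}^2$ and $\Delta t\sum_k\|\nabla_h\theta^k\|_{\L^2(\mathcal{T}_h)}^2$.

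To obtain an equation for $\theta^k$ I would integrate \eqref{2.ncweakform} over $(t_{k-1},t_k)$, divide by $\Delta t$, and subtract \eqref{2.ncweakformfd}; the $f^k$-terms cancel exactly, leaving $(\bar\partial\theta^k,\chi)+[A_{CR}(u_h(t_k),\chi)-A_{CR}(u_h^k,\chi)]=R^k(\chi)$, where $R^k$ collects the time-quadrature defect $A_{CR}(u_h(t_k),\chi)-\tfrac1{\Delta t}\int_{t_{k-1}}^{t_k}A_{CR}(u_h(s),\chi)\,\mathrm{d}s$ and the memory defect. Testing with $\chi=\theta^k$, the term $(\bar\partial\theta^k,\theta^k)$ yields $\tfrac1{2\Delta t}(\|\theta^k\|_{\L^2}^2-\|\theta^{k-1}\|_{\L^2}^2)$, and the bracket is controlled from below by Lemma \ref{2.crnclem111} (with $w=\theta^k$, $u_h\leftrightarrow u_h(t_k)$, $v_h\leftrightarrow u_h^k$), producing $\tfrac{\nu}{2}\|\nabla_h\theta^k\|_{\L^2(\mathcal{T}_h)}^2$, the nonnegative $\tfrac{\beta}{4}(\cdots)$ contributions, and a term $-C(\alpha,\nu)(\|u_h(t_k)\|_{\L^{4\delta}}^{8\delta/(4-d)}+\|u_h^k\|_{\L^{4\delta}}^{8\delta/(4-d)})\|\theta^k\|_{\L^2}^2$. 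The time-quadrature defect is rewritten as $\tfrac1{\Delta t}\int_{t_{k-1}}^{t_k}\!\int_s^{t_k}\tfrac{\mathrm{d}}{\mathrm{d}\sigma}A_{CR}(u_h(\sigma),\theta^k)\,\mathrm{d}\sigma\,\mathrm{d}s$, bounded by $C\Delta t\,(\text{powers of }\|u_h\|,\|\partial_tu_h\|)\,\|\nabla_h\theta^k\|_{\L^2(\mathcal{T}_h)}$, so that after Young's inequality it leaves a $C(\Delta t)^2$ datum and a fraction of $\nu\|\nabla_h\theta^k\|_{\L^2(\mathcal{T}_h)}^2$ to absorb.

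For the memory defect I would substitute $u_h^j=u_h(t_j)-\theta^j$, splitting $\eta\sum_j\omega_{kj}\Delta t\,\nabla_hu_h^j$ into $\eta\sum_j\omega_{kj}\Delta t\,\nabla_hu_h(t_j)$ and $-\eta\sum_j\omega_{kj}\Delta t\,\nabla_h\theta^j$. The difference between the first piece and $\tfrac{\eta}{\Delta t}\int_{t_{k-1}}^{t_k}(K*\nabla_hu_h)(s)\,\mathrm{d}s$ is the consistency error of the rule $\{\omega_{kj}\}$; using $\sum_j\omega_{kj}\Delta t\le C_K$ together with $\nabla_h\partial_tu_h\in\L^2(0,T;\L^2)$ (and Lemma \ref{2.l1}), this is of size $\eta\Delta t\sup_{k,j}\omega_{kj}$ in $\L^2$, so after Young's inequality it contributes exactly the $\eta^2(\Delta t)^2\sup_{k,j}\omega_{kj}^2$ datum. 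The remaining bilinear term $-\eta\sum_{k}\sum_{j\le k}\omega_{kj}\Delta t\,(\nabla_h\theta^j,\nabla_h\theta^k)$, once summed over $k$, is $\le0$ by the discrete positivity of the implicit quadrature rule (the property used in Lemma \ref{2.4.1}, cf.\ \cite{WVt}), hence is discarded. Multiplying the resulting inequality by $\Delta t$, summing over $k=1,\dots,m$, telescoping, and using $\theta^0=u_h(0)-u_h^0=0$ gives $\|\theta^m\|_{\L^2}^2+\nu\Delta t\sum_{k\le m}\|\nabla_h\theta^k\|_{\L^2(\mathcal{T}_h)}^2\le C((\Delta t)^2+\eta^2(\Delta t)^2\sup_{k,j}\omega_{kj}^2)+C\Delta t\sum_{k\le m}(1+\|u_h(t_k)\|_{\L^{4\delta}}^{8\delta/(4-d)}+\|u_h^k\|_{\L^{4\delta}}^{8\delta/(4-d)})\|\theta^k\|_{\L^2}^2$; the discrete Gronwall inequality \cite{AKP} then closes the bound, and combining with the interpolation estimate for $u_h-\Pi_{\Delta t}u_h$ gives the claim.

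The principal obstacle is verifying that the Gronwall coefficient $\Delta t\sum_k(\|u_h(t_k)\|_{\L^{4\delta}}^{8\delta/(4-d)}+\|u_h^k\|_{\L^{4\delta}}^{8\delta/(4-d)})$ is bounded independently of $h$ and $\Delta t$: for $u_h(t_k)$ this uses the enhanced regularity of the semi-discrete solution coming from $u_0\in\H^2\cap\H_0^1$, $f\in\H^1(0,T;\L^2)$ (an $\L^{\infty}(0,T;\H_0^1)$-type bound plus a Gagliardo--Nirenberg embedding), while for $u_h^k$ it uses the fully-discrete stability bound $|\!|\!|u_h^k|\!|\!|_{CR}\le C(f,u_0)$ of Lemma \ref{2.4.1} together with a discrete Gagliardo--Nirenberg inequality; the exponent $8\delta/(4-d)$ is integrable against the available $\H^1$-type control precisely when $\delta\in[1,\infty)$ for $d=2$ and $\delta\in[1,2]$ for $d=3$, which is exactly the restriction imposed in the lemma. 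A secondary point requiring care is tuning the weights in Young's inequality applied to the convection and reaction quadrature defects so that the $O((\Delta t)^2)$ structure survives while the $\tfrac{\nu}{4}\|\nabla_h\theta^k\|_{\L^2(\mathcal{T}_h)}^2$ and $\beta\gamma$-type terms on the right are absorbed into the left-hand side.
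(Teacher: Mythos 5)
Your proposal follows essentially the same route as the paper: the same splitting of $u_h-u_{kh}$ into a time-interpolation error plus the piecewise-linear reconstruction of the nodal errors $u_h(t_k)-u_h^k$, the same error equation obtained by integrating \eqref{2.ncweakform} over $(t_{k-1},t_k)$ and subtracting \eqref{2.ncweakformfd}, the same treatment of the quadrature defects via $\int_{t_{k-1}}^{t_k}\int_{t_k}^{t}\partial_t(\cdot)\,\mathrm{d}s\,\mathrm{d}t$, the same use of the positivity of the implicit quadrature rule for the memory term, and the same closure by discrete Gronwall together with the regularity of $\partial_t u_h$ guaranteed by the hypotheses. The argument is correct and matches the paper's proof in structure and in all essential estimates.
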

\begin{proof} To obtain the desired result, we first estimate the error at nodal values in Step 1.
	\vskip 2mm
	\noindent\textbf{Step 1:}	For each $t\in[t_{k-1},t_k],$
	integrating the scheme  \eqref{2.ncweakform}, we attain
	\begin{align}\label{2.sdi}
		&\nonumber (u_h({t_{k}})-u_h(t_{k-1}),\chi)+\nu \left(\int_{t_{k-1}}^{t_k}\nabla_h u_h(t) \mathrm{~d} t,\nabla_h \chi\right)+\alpha \left(\int_{t_{k-1}}^{t_k} B_{CR}(u_h(t)) \mathrm{~d} t,\chi
		\right)\\&\qquad+\eta\left(\int_{t_{k-1}}^{t_k}(K*\nabla_h u_h)(t)\mathrm{~d} t,\nabla_h \chi\right) = \beta\left(\int_{t_{k-1}}^{t_k}c(u_h(t))\mathrm{~d} t,\chi\right)+\left( \int_{t_{k-1}}^{t_k}f(t)\mathrm{~d} t,\chi\right),
	\end{align}
	The fully-discrete scheme \eqref{2.ncweakformfd} at $t = t_k,$ is given as
	\begin{align}\label{2.fd3}
		&\nonumber	\left(\frac{u_h^k-u_h^{k-1}}{\Delta t},\chi\right)+\nu a(u_h^k,\chi)+ \alpha b_{CR}(u_h^k,u_h^k,\chi)+\Big(\sum_{j=1}^k \omega_{kj} \Delta t \nabla_h u_{h}^k,\nabla_h \chi\Big) \\&\qquad=\beta(u_h^k(1-(u_h^k)^{\delta})((u_h^k)^{\delta}-\gamma),\chi)+ (f^k,\chi).
	\end{align}
	From \eqref{2.sdi}-\eqref{2.fd3}, we have
	\begin{align*}
		&(u_h(t_k)-u_h^k,\chi)-(u_h(t_{k-1})-u_h^{k-1},\chi)+\nu \left(\int_{t_{k-1}}^{t_k}\nabla_h u_h(t) \mathrm{~d}t-\Delta t\nabla_h u_h^k,\nabla_h \chi\right) \\&+\eta\left(\int_{t_{k-1}}^{t_k}(K*\nabla_h u_h)(t)\mathrm{~d} t,\nabla_h \chi\right) - \Delta t \eta \left(\sum_{j=1}^{k}\Delta t \omega_{kj}\nabla_h u_h^{j} ,\nabla_h\chi\right) \\&+\alpha \left(\left(\int_{t_{k-1}}^{t_k} B_{CR}(u_h(t)) \mathrm{~d} t\right) - \Delta t B_{CR}(u_h^k),\chi
		\right)=\beta\left(\int_{t_{k-1}}^{t_k}c(u_h(t))\mathrm{~d} t - c(u_h^k),\chi\right).
	\end{align*}
	Take $\chi = u_h(t_k)-u_h^k$ and rearrange the above equation; we achieve
	\begin{align*}
		&\|u_h(t_k)-u_h^k\|_{\L^2}^2+\nu \Delta t \|\nabla_h (u_h(t_k) -u_h^k)\|_{\L^2}^2 + \Delta t \eta \left(\sum_{j=1}^{k}\omega_{kj}\Delta t\nabla_h (u_h(t_j)-u_h^{j}) ,\nabla_h\chi\right) \\&=	(u_h(t_{k-1})-u_h^{k-1},\chi) - \nu \left(\int_{t_{k-1}}^{t_k}\nabla_h u_h(t) \mathrm{~d}t-\Delta t\nabla_h u_h(t_k),\nabla_h \chi\right)\\&\quad + \eta \left(\Delta t \sum_{j=1}^{k}\omega_{kj}\Delta t\nabla_h u_h(t_j) - \int_{t_{k-1}}^{t_k}(K*\nabla_h u_h)(t)\mathrm{~d} t,\nabla_h \chi\right)  \\&\quad-\alpha \left(\int_{t_{k-1}}^{t_k} B_{CR}(u_h(t)) \mathrm{~d} t- \Delta t B_{CR}(u_h(t_k)),\chi
		\right)-\alpha \Delta t\left(B_{CR}(u_h(t_k) ) -  B_{CR}(u_h^k),\chi
		\right)\\&\quad+\beta \left(\int_{t_{k-1}}^{t_k} c(u_h(t)) \mathrm{~d} t - \Delta t c(u_h(t_k)),\chi
		\right)+\beta \Delta t\left(c(u_h(t_k) ) -  c(u_h^k),\chi
		\right).
	\end{align*}
	The first term on right-hand side can be estimated using Cauchy-S\'chwarz and Young's inequality as  
	\begin{align*}
		(u_h(t_{k-1})-u_h^{k-1},u_h(t_k) -u_h(t_k)) \leq \frac{1}{2}\|u_h(t_{k-1})-u_h^{k-1}\|_{\L^2}^2 + \frac{1}{2}\|u_h(t_k) -u_h(t_k)\|_{\L^2}^2\label{2.fde1},
	\end{align*}
	Again using Cauchy-S\'chwarz, we achieve
	\begin{align*}
		&\nonumber\nu \left(\int_{t_{k-1}}^{t_k}\nabla_h u_h(t) \mathrm{~d}t-\Delta t\nabla_h u_h(t_k),\nabla_h (u_h(t_k)-u_h^k)\right)\\
		&\quad\leq \frac{2}{\nu\Delta t}\Big\|\int_{t_{k-1}}^{t_k}\nabla_h u_h(t)\mathrm{~d}t -\Delta t\nabla_h u_h(t_k) \Big\|_{\L^2(\mathcal{T}_h)}^2+\frac{\nu\Delta t}{8}\Big\|\nabla_h (u_h(t_k)-u_h^k)\Big\|_{\L^2(\mathcal{T}_h)}^2,
	\end{align*}
	where the first term on the right hand side can be estimated as 
\begin{align}\label{2.int1}
		\nonumber\frac{1}{\Delta t}\Big\|\int_{t_{k-1}}^{t_k}\nabla_h u_h(t)\mathrm{~d}t-\Delta t\nabla_h u_h(t_k) \Big\|_{\L^2(\mathcal{T}_h)}^2 
		&= \frac{1}{\Delta t}\Big\|\int_{t_{k-1}}^{t_k}\int_{t_k}^{t}\nabla_h \partial_tu_h(s) \mathrm{~d}s\mathrm{~d}t\Big\|_{\L^2(\mathcal{T}_h)}^2
		\\&\leq (\Delta t)^2\int_{t_{k-1}}^{t_k} \| \partial_t \nabla_h u_h(s)\|_{\L^2(\mathcal{T}_h)}^2 \mathrm{~d}s.
	\end{align}
	Estimating the memory term as 
	\begin{align*}
		& \eta \left(\Delta t \sum_{j=1}^{k}\Delta t\omega_{kj}\nabla_h u_h(t_j) - \int_{t_{k-1}}^{t_k}(K*\nabla_h u_h)(t)\mathrm{~d} t,\nabla_h (u_h(t_k)-u_h^k)\right) \\
		& \leq \frac{2\eta^2}{\nu\Delta t} \Big\|\Delta t \sum_{j=1}^{k}\Delta t\omega_{kj}\nabla_h u_h(t_j) - \int_{t_{k-1}}^{t_k}(K*\nabla_h u_h)(t)\mathrm{~d} t\Big\|^2_{\L^2(\mathcal{T}_h)}+\frac{\nu\Delta t}{8}\|\nabla_h (u_h(t_k)-u_h^k)\|^2_{\L^2(\mathcal{T}_h)},
	\end{align*}
	where 
	$\frac{2\eta^2}{\nu\Delta t} \Big\|(\Delta t)^2 \sum_{j=1}^{k}\omega_{kj}\nabla_h u_h(t_j) - \int_{t_{k-1}}^{t_k}(K*\nabla_h u_h)(t)\mathrm{~d} t\Big\|^2_{\L^2(\mathcal{T}_h)}$ can be estimated using \eqref{2.int1} as
	\begin{align*}
		\frac{1}{\Delta t}\Big\|(\Delta t)^2 \sum_{j=1}^{k}\omega_{kj}\nabla_h u_h(t_j) - \int_{t_{k-1}}^{t_k}(K*\nabla_h u_h)(t)\mathrm{~d} t\Big\|_{\L^2(\mathcal{T}_h)}^2 
	\\	\leq T(\Delta t)^3\sum_{j=1}^k \overline{K}_{kj}^2\int_{t_{j-1}}^{t_j}\|\partial_t\nabla_h u_h(\tau)\|_{\L^2(\mathcal{T}_h)}^2\mathrm{~d}\tau,
	\end{align*}
	where $\overline{K}_{kj} =  \frac{1}{\Delta t}\int_{t_{k-1}}^{t_k}\int_{t_{j-1}}^{\min(t,t_j)}|K(t-s)| \mathrm{~d}s\mathrm{~d} t$.
	Applying integration by parts, inverse inequality and Cauchy Schwarz  inequality gives 
	\begin{align}\label{2.nl1}
		\nonumber	&\alpha \left(\int_{t_{k-1}}^{t_k} B_{CR}(u_h(t)) \mathrm{~d} t - \Delta t B_{CR}(u_h(t_k)),u_h(t_k)-u_h^k
		\right) \\
		&\leq \frac{2\alpha^2}{\nu(\delta+1)^2\Delta t}\Big\|\int_{t_{k-1}}^{t_k} u_h^{\delta+1}(t) \mathrm{d}t - \Delta t u_h^{\delta+1}(t_k)\Big\|_{\L^2}^2+\frac{\nu\Delta t}{8}\Big\|\nabla_h(u_h(t_k)-u_h^k)\Big\|^{2}_{\L^2(\mathcal{T}_h)}.
	\end{align}
	Now, estimating  $\frac{1}{\Delta t}\Big\|\int_{t_{k-1}}^{t_k}u_h^{\delta+1}(t)\mathrm{~d}t-\Delta t u_h^{\delta+1}(t_k) \Big\|_{\L^2}^2 $ similar to \eqref{2.int1} as   
	\begin{align*}
		&\frac{1}{\Delta t}\Big\|\int_{t_{k-1}}^{t_k}u_h^{\delta+1}(t)\mathrm{~d}t-\Delta t u_h^{\delta+1}(t_k)\Big\|_{\L^2}^2   
		\leq(\Delta t)^2(\delta+1)^2\int_{t_{k-1}}^{t_k} \|u_h^{\delta}(s)\partial_tu_h(s)\|_{\L^2}^2\mathrm{~d}s.
	\end{align*}
	The non-linear reaction term can be estimated as 
	\begin{align}\label{2.33}
		\nonumber\left(\int_{t_{k-1}}^{t_k} c(u_h(t)) \mathrm{~d} t\right)& - \Delta t c(u_h(t_k))= \beta (1+\gamma)\left(\int_{t_{k-1}}^{t_k} u_h^{\delta+1}(t) \mathrm{~d} t-\Delta t u_h^{\delta+1}(t_k)\right)\\&-\beta\gamma\left(\left(\int_{t_{k-1}}^{t_k} u_h(t) \mathrm{~d} t\right)-\Delta t u_h(t_k)\right)-\beta\left(\int_{t_{k-1}}^{t_k} u_h^{2\delta+1}(t) \mathrm{~d} t-\Delta t u_h^{2\delta+1}(t_k)\right).
	\end{align}
To estimate the second term, we use Cauchy-S\'chwarz inequality and the approach similar to \eqref{2.int1} as
	\begin{align}
		\nonumber\left(\int_{t_{k-1}}^{t_k} u_h(t) \mathrm{~d} t-\Delta t u_h(t_k),u_h(t_k)-u_h^k \right) 
		&\leq \frac{(\Delta t)^2}{\nu}\int_{t_{k-1}}^{t_k} \| \partial_tu_h(s)\|_{\L^2}^2 \mathrm{~d}s +\frac{\nu\Delta t}{8}\|(u_h(t_k)-u_h^k)\|^{2}_{\L^2}.
	\end{align}
    The same approach discussed in \eqref{2.nl1} gives
	\begin{align*}
		\nonumber&\beta(1+\gamma)\left(\int_{t_{k-1}}^{t_k} u_h^{\delta+1}(t) \mathrm{~d} t-\Delta t u_h^{\delta+1}(t_k), u_h(t_k)-u_h^k  \right)\\
		&\quad\leq  \frac{(\Delta t)^2(1+\gamma)^2\beta^2(\delta+1)^2}{\nu}\int_{t_{k-1}}^{t_k} \|u_h^{\delta}(s)\partial_tu_h(s)\|_{\L^2}^2 + \frac{\nu\Delta t}{8}\|u_h(t_k)-u_h^k\|^{2}_{\L^2}.
	\end{align*}
	The final term of \eqref{2.33} can be estimated as
	\begin{align*}
	&\Big|\beta\left(\int_{t_{k-1}}^{t_k} u_h^{2\delta+1}(t) \mathrm{~d} t-\Delta t u_h^{2\delta+1}(t_k), u_h(t_k)-u_h^k  \right)\Big|\\
	&\leq\frac{2\beta^2(2(\delta+1))^2(\Delta t)^2}{\nu}\|u_h\|_{\L^{\infty}(0,T; \L^{2(\delta+1)}(\Omega))}^{2\delta}\int_{t_{k-1}}^{t_k}\|u_h^{\delta}(t)\partial_tu_h(t)\|^2_{\L^2}  \mathrm{~d} t +\frac{\nu\Delta t}{8} \|\nabla_h(u_h(t_k)-u_h^k)\|^2_{\L^{2}}.
\end{align*}
First, we use Taylor's formula, Inverse and H\"older's inequalities in  $-\alpha\Delta t(B_{CR}(u_h(t_k))-B_{CR}(u_h^k),w)$. Then, applying discrete Gagliardo-Nirenberg \cite{CHF}, interpolation  and Young's inequalities yields
\begin{align*}
	&-\alpha \Delta t\left(B_{CR}(u_h(t_k) ) -  B_{ CR}(u_h^k),u_h(t_k)-u_h^k
	\right)\\&=\frac{\alpha}{\delta+1} \left((u_h(t_k)^{\delta+1}-(u_h^k)^{\delta+1})\left(\begin{array}{c}1\\\vdots\\1\end{array}\right),\nabla_h w\right)\nonumber\\
	&\leq \frac{\nu}{8}\|\nabla_h w\|_{\L^2(\mathcal{T}_h)}^2+C(\alpha,\nu)\left(\|u_h(t_k)\|_{\L^{2(\delta+1)}}^{\frac{4\delta(\delta+1)}{(2-d)\delta+2}}+\|u_h^k\|_{\L^{2(\delta+1)}}^{\frac{4\delta(\delta+1)}{(2-d)\delta+2}}\right)\|w\|_{\L^2}^2.
\end{align*}
where $C(\alpha, \nu) = \left(\frac{2((2+d)\delta+2)}{\nu(\delta+1)}\right)^{\frac{(2-d)\delta+2}{(2+d)\delta+2}}\times\left(\frac{(2-d)\delta+2}{4(\delta+1)}\right)(2^{\delta-1}\alpha)^{\frac{4(\delta+1)}{(2-d)\delta+2}}$.

Combining the above estimates and using the calculations similar to  \eqref{2.30}, then summing overall $k$,
 $k = 1,2,\cdots, N$ , and using the positivity of the kernel \eqref{2.pk}, we obtain
 	\begin{align*}
	&\|u_h(t_N)-u_h^N\|_{\L^2}^2+\nu|\!|\!|u_h(t_k) -u_h^k|\!|\!|_{CR}^2\nonumber\\& \leq\frac{4\alpha^2(\Delta t)^2}{\nu}\int_{0}^{T} \| \partial_t \nabla_h u_h(s)\|_{\L^2}^2 \mathrm{~d}s
	+\frac{2(\Delta t)^2\beta^2\gamma^2}{\nu}
	\int_{0}^{T} \|\partial_tu_h(s)\|_{\L^2}^2 \mathrm{~d}s   \nonumber \\&\quad+2\Delta t \sum_{k=1}^N\bigg(C(\alpha,\nu)\left(\|u_h(t_k)\|_{\L^{2(\delta+1)}}^{\frac{4\delta(\delta+1)}{(2-d)\delta+2}}+\|u_h^k\|_{\L^{2(\delta+1)}}^{\frac{4\delta(\delta+1)}{(2-d)\delta+2}}\right)+\frac{\beta}{2}2^{2\delta}(1+\gamma)^2(\delta+1)^2\nonumber\\&\quad + \frac{3\nu}{8}\bigg)\|u_h(t_k)-u_h^k\|_{\L^2}^2+\frac{2(\Delta t)^2}{\nu} \left(2\alpha^2 + C \beta^2(1+\gamma)^2(\delta+1)^2\right)\int_{0}^{T} \|u_h(s)^{\delta}\partial_tu_h(s)\|_{\L^2}^2\mathrm{~d}s\nonumber \\&\quad + \frac{4\eta^2T(\Delta t)^3}{\nu}\sum_{k=1}^N\sum_{j=1}^k \overline{K}_{kj}^2\int_{t_{j-1}}^{t_j}\|\partial_t\nabla_h u_h(\tau)\|_{\L^2(\mathcal{T}_h}^2\mathrm{~d}\tau.\label{329}
\end{align*}
Using 
 	\begin{align*}
		T(\Delta t)^3\sum_{k=1}^N\sum_{j=1}^k \omega_{kj}^2\int_{t_{j-1}}^{t_j}\|\partial_t\nabla_h u_h(\tau)\|_{\L^2(\mathcal{T}_h)}^2\mathrm{~d}\tau 
		& \leq \sup_{k,j} \omega_{kj}^2 T(\Delta t)^2 \int_{0}^{T}\|\partial_t\nabla_h u_h(\tau)\|_{\L^2(\mathcal{T}_h)}^2\mathrm{~d}\tau,
	\end{align*}
and Gronwall's inequality implies that 
	\begin{align}\label{2.fdest}
		&\nonumber\|u_h(t_N)-u_h^N\|_{\L^2}^2+ \nu|\!|\!|u_h(t_k) -u_h^k|\!|\!|_{CR}^2\\&\nonumber \leq C(\Delta t)^2\left(\int_{0}^{T} \| \partial_t \nabla_h u_h(s)\|_{\L^2(\mathcal{T}_h)}^2 \mathrm{~d}s +\int_{0}^{T} \|u_h(s)^{\delta}\partial_tu_h(s)\|_{\L^2}^2+\frac{4\eta^2}{\nu}\sup_{k,j} \omega_{kj}^2 \int_{0}^{T} \|\partial_tu_h(s)\|_{\L^2}^2 \mathrm{d}s \right)\\&\quad  \times  \exp\left\{2\Delta t\left[C(\alpha,\nu)\left(\|u_h(t_k)\|_{\L^{2(\delta+1)}}^{\frac{4\delta(\delta+1)}{(2-d)\delta+2}}+\|u_h^k\|_{\L^{2(\delta+1)}}^{\frac{4\delta(\delta+1)}{(2-d)\delta+2}}\right)+\frac{\beta}{2}2^{2\delta}(1+\gamma)^2(\delta+1)^2 + \frac{3\nu}{8}\right]\right\}.
	\end{align}
	\vskip 2mm
	\noindent\textbf{Step 2:} \emph{Estimate for any $t\in [t_{k-1},t_k]$.} First, we define the following linear interpolation for the semi-discrete solution $u_h$, \cite[Section 3.1]{YGU}):
	$$\mathcal{I} u_h(t_k) =  u_h(t_{k-1})+\left(\frac{t-t_{k-1}}{\Delta t}\right)(u_h(t_{k})-u_h(t_{k-1})), \qquad \qquad \forall t\in [t_{k-1},t_k].$$
	Then, the error term $u_h-u_{kh}$ is divided as, $u_h-u_{kh} = u_h -\mathcal{I}u_h + \mathcal{I}u_h -u_{kh}$. A simple application of a triangle inequality gives
	\begin{align*}
		\|u_h - u_{kh}\|_{\L^2(0,T;\H^1(\mathcal{T}_h))}^2 \leq   2\|u_h-\mathcal{I}u_h\|_{\L^2(0,T;\H^1(\mathcal{T}_h))}^2 +2 \|\mathcal{I}u_h -u_{kh}\|_{\L^2(0,T;\H^1(\mathcal{T}_h))}^2.
	\end{align*}
Invoking \cite[Lemma 3.2]{YGU} for the first part, we attain
	\begin{align}\label{2.sim}
		\nonumber&\|u_h-\mathcal{I} u_h\|_{\L^2(0,T;\H^1(\mathcal{T}_h))}^2 = \sum_{i=1}^N\int_{t_{i-1}}^{t_i}\|\nabla_h(u_h-\mathcal{I} u_h)\|_{\H^1(\mathcal{T}_h)}^2 \leq C(\Delta t )^2 \int_{0}^{T}\left\|\partial_tu_h\right\|_{\H^1(\mathcal{T}_h)}^2 \mathrm{d}t,\\&
		\|\mathcal{I}u_h -u_{kh}\|_{\L^2(0,T;\H^1(\mathcal{T}_h))}^2\leq\ \sum_{i=1}^N\int_{t_{i-1}}^{t_i} \|\mathcal{I}u_h(t) -u_{kh}(t)\|_{\H^1(\mathcal{T}_h)}\leq C  \sum_{i=1}^N\Delta t \|u_h(t_i) -u_h^i\|_{\H^1(\mathcal{T}_h)}.
	\end{align}
	Using the triangle inequality gives
	\begin{align*}
		\|u - u_{kh}\|_{\L^{\infty}(0,T;\L^2(\Omega))}^2 \leq 2 \|u_h-\mathcal{I}u_h\|_{\L^{\infty}(0,T;\L^2(\Omega))}^2 + 2\|\mathcal{I}u_h -u_{kh}\|_{\L^{\infty}(0,T;\L^2(\Omega))}^2.
	\end{align*}
	As in \eqref{2.sim}, by using \cite[Corollary 3.1]{YGU}, we achieve
	\begin{align}\label{2.sim1}
		\nonumber&\|u_h-\mathcal{I} u_h\|_{\L^{\infty}(0,T;\L^2(\Omega))}^2 \leq\sup_{1\leq i \leq N}\Big(\sup_{t_{i-1}\leq t\leq t_{i}}\|u_h-\mathcal{I} u_h\|^2_{\L^2(\Omega)}\Big) \leq C(\Delta t )^2 \|u_h\|_{W^{1,\infty}(0,T ; \L^2(\Omega))}^2,
		\\& \|\mathcal{I}u_h -u_{kh}\|_{\L^{\infty}(0,T;\L^2(\Omega))}^2 \leq\sup_{1\leq i \leq N}\Big(\sup_{t_{i-1}\leq t\leq t_{i}}\|\mathcal{I}u_h -u_{kh}\|^2_{\L^2(\Omega)}\Big) \leq C\sup_{1\leq i \leq N}\Big(\|u_h(t_i) -u_h^i\|^2_{\L^2(\Omega)}\Big).
	\end{align}
	Combining \eqref{2.sim}-\eqref{2.sim1} with \eqref{2.fdest} leads to the desired result.
\end{proof}
Finally, we state the main theorem of this section 
\begin{theorem}\label{2.thm7.3}
	 For the initial data $u_0\in \H^2(\Omega)\cap \H_0^1(\Omega)$ and $f\in\H^1(0,T;\L^2(\Omega)),$ we have  as $\Delta t, h\rightarrow 0$ the finite element approximation $u_{kh}$ converges to $u$ . In addition, the following estimate is satisfied:
\begin{align}\label{2.fdes}
		\nonumber \|u- u_{kh}\|_{\L^{\infty}(0,T;\L^2(\Omega))}^2 + |\!|\!|u - u_{kh}|\!|\!|_{CR}^2&\leq C(f,u_0)(\eta^2(\Delta t)^2\sup_{k,j} \omega_{kj}^2 +(\Delta t)^2 + h^2).
	\end{align}
\end{theorem}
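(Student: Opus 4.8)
The plan is to bridge the exact solution $u$ and the fully-discrete solution $u_{kh}$ through the semi-discrete solution $u_h$ of \eqref{2.ncweakform}, so that the total error splits into a spatial part governed by Theorem \ref{2.SDNCFEM} and a temporal part governed by Lemma \ref{2.thm7.2}. Concretely, adding and subtracting $u_h$ and using $(a+b)^2\le 2a^2+2b^2$ gives
\begin{align*}
	\|u - u_{kh}\|_{\L^{\infty}(0,T;\L^2(\Omega))}^2 + |\!|\!|u - u_{kh}|\!|\!|_{CR}^2
	&\le 2\Big(\|u - u_h\|_{\L^{\infty}(0,T;\L^2(\Omega))}^2 + |\!|\!|u - u_h|\!|\!|_{CR}^2\Big)\\
	&\quad + 2\Big(\|u_h - u_{kh}\|_{\L^{\infty}(0,T;\L^2(\Omega))}^2 + |\!|\!|u_h - u_{kh}|\!|\!|_{CR}^2\Big).
\end{align*}
The temporal part is immediate: the hypotheses of the present theorem ($\delta\in[1,\infty)$ for $d=2$, $\delta\in[1,2]$ for $d=3$, $u_0\in\H^2(\Omega)\cap\H_0^1(\Omega)$, $f\in\H^1(0,T;\L^2(\Omega))$) coincide with those of Lemma \ref{2.thm7.2}, which bounds the second bracket by $C(f,u_0)\big((\Delta t)^2+\eta^2(\Delta t)^2\sup_{k,j}\omega_{kj}^2\big)$.

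For the spatial part I would invoke Theorem \ref{2.SDNCFEM}, and the step requiring care is to confirm that the data assumptions here imply those of that theorem, namely $u_0\in X_{d\delta}$, $f\in\L^2(0,T;\L^2(\Omega))$, together with finiteness of $\Theta(u)$. The inclusion $f\in\H^1(0,T;\L^2(\Omega))\hookrightarrow\L^2(0,T;\L^2(\Omega))$ is clear; the inclusion $\H^2(\Omega)\cap\H_0^1(\Omega)\subset X_{d\delta}=\H_0^1(\Omega)\cap\L^{d\delta}(\Omega)$ follows from the Sobolev embedding $\H^2(\Omega)\hookrightarrow\L^{d\delta}(\Omega)$, valid in the admissible range of $\delta$ (for $d=2$ one has $\H^2\hookrightarrow\L^{p}$ for every finite $p$; for $d=3$ and $\delta\le 2$ one has $d\delta\le 6$ and $\H^2\hookrightarrow\L^6$). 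Finiteness of $\Theta(u)$ comes from Theorem \ref{2.weaksolution}: parts (1) and (2) give $u\in\L^2(0,T;\H^2(\Omega))\cap\L^2(0,T;\H_0^1(\Omega))$ (using $u_0\in X_{2(\delta+1)}$, again via $\H^2\hookrightarrow\L^{2(\delta+1)}$ in the stated range, and $f\in\L^2(0,T;\L^2(\Omega))$), while part (3) gives $\partial_t u\in\L^2(0,T;\H_0^1(\Omega))$, so the three integrals defining $\Theta(u)$ are all finite. Choosing $u_h^0=I_h u_0$ and using the interpolation bound \eqref{2.ncapprox11} in the form $\|u_h^0-u_0\|_{\L^2}\le Ch^2\|u_0\|_{\H^2}$, Theorem \ref{2.SDNCFEM} then yields
\[
	\|u - u_h\|_{\L^{\infty}(0,T;\L^2(\Omega))}^2 + |\!|\!|u - u_h|\!|\!|_{CR}^2 \le C\big(\|u_h^0-u_0\|_{\L^2}^2 + h^2\,\Theta(u)\big)\le C(f,u_0)\,h^2.
\]

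Substituting the two bounds into the splitting and absorbing everything into a single constant $C(f,u_0)$ gives \eqref{2.fdes}. The convergence assertion then follows at once, since each of $\eta^2(\Delta t)^2\sup_{k,j}\omega_{kj}^2$, $(\Delta t)^2$ and $h^2$ tends to zero as $\Delta t,h\to 0$; in particular $(\Delta t)^2\sup_{k,j}\omega_{kj}^2\to 0$ because $K$ is only \emph{weakly} singular, so that $\sup_{k,j}\omega_{kj}$ grows strictly slower than $(\Delta t)^{-1}$. I do not expect a genuine obstacle: the whole argument is the triangle inequality plus bookkeeping of constants, and the only delicate point is verifying that the hypotheses of Theorem \ref{2.thm7.3} simultaneously activate both Theorem \ref{2.SDNCFEM} (through Theorem \ref{2.weaksolution} and elementary embeddings) and Lemma \ref{2.thm7.2}, which they do.
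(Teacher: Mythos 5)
Your proposal is correct and follows exactly the paper's route: the paper proves Theorem \ref{2.thm7.3} by combining Theorem \ref{2.SDNCFEM} (spatial error for the semi-discrete scheme) with Lemma \ref{2.thm7.2} (temporal error between semi- and fully-discrete solutions) via the triangle inequality. Your additional verification that the hypotheses of Theorem \ref{2.thm7.3} simultaneously activate both ingredients is sound bookkeeping that the paper leaves implicit.
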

\begin{proof}
	The proof follows directly from Theorem \ref{2.SDNCFEM} and Lemma \ref{2.thm7.2}.
\end{proof}
\subsection{Discontinuous Galerkin method}\label{2.secDG}
Additional to the mesh notation used so far, we define some notations for DG formulation.
The shared edge between the two mesh cells $K_\pm$ is denoted by, $E=K_+\cap K_-\in\mathcal{E}^i_h$. Moreover, the traces of functions $w\in C^0(\mathcal{T}_h)$, on $E$ of $K_\pm$ are denoted by $w_{\pm}$
respectively. The average operator $\{\!\{\cdot\}\!\}$ and the jump operator on edge $E$ are defined as:
\begin{align*}
	\{\!\{w\}\!\}=\frac{1}{2}(w_+ + w_-) \quad  \text{and}	\quad[\![w]\!]={w_+\mathbf{{n}_+} + w_-\mathbf{{n}_-}},
\end{align*}
respectively. If ${w}\in C^1(\mathcal{T}_h),$ we define $	[\![\partial{w}/\partial\mathbf{{n}}]\!]=\nabla({w}_+ -{w}_-)\cdot\mathbf{{n}_+}, $
where $\mathbf{{n}_\pm}$ represents the unit outward normal vectors for the respective mesh cells $K_\pm$. If $E\in K_+\cap\partial\Omega$, then we have ${[\![{w}]\!]={w}_+\mathbf{{n}_+}}$ and $\{\!\{w\}\!\}=w_+$. We denote the exterior trace of the function $u$ by $u^e$. For the boundary edges, we choose  $u^e =0.$ The local gradient on each $K\in\mathcal{T}_h$ is denoted by the notation $\nabla_h$, with $(\nabla_h{w})|_K = \nabla({w}|_K)$ . The discrete space for DG formulation is defined as 
\begin{align}\label{2.dgsubspace1}
	{V}_h^{DG}=\{{v}\in \L^2(\Omega): \forall\  K\in \mathcal{T}_h : {v}|_K \in \mathcal{P}_1(K)\},
\end{align}
where $\mathcal{P}_1(K)$ denotes the space of polynomials of degree $1$ on $K.$
\subsubsection{Semi-discrete DGFEM}
In this context, the semi-discrete weak formulation of \eqref{2.GBHE} is given by: Find $u^{DG}_h\in V_h^{DG}$, for $t\in(0,T)$ such that
	\begin{align}\label{2.dgweakform}
	\nonumber(\partial_tu^{DG}_h(t), \chi(t))+A_{DG}(u^{DG}_h(t),\chi(t))+\eta(( K*\nabla_h u^{DG}_h)(t),\nabla_h \chi(t))&=( f(t),\chi(t)),\\
	(u^{DG}_h(0),\chi(t))&=(u_h^0,\chi(t)), 
	\end{align}
$\forall$ $\chi \in V_h^{DG}$, where
\begin{align}\label{2.ADGf} 
A_{DG}(u,v) = \nu a_{DG}(u,v) + \alpha b_{DG}(u,u,v)-\beta(c(u),v),
\end{align}
with
\begin{align}\label{2.adg}
\nonumber a_{DG}({u},{v})&=(\nabla_h {u}, \nabla_h {v})-\sum_{E\in\mathcal{E}_h}\int_E {\{\!\{\nabla_h {u}\}\!\}}\!\cdot\![\![{v}]\!]\mathrm{~d}{s}\\&\quad-\sum_{E\in\mathcal{E}_h}\int_E {\{\!\{\nabla_h {v}\}\!\}}\!\cdot\![\![{u}]\!]\mathrm{~d}{s}
	+\sum_{E\in\mathcal{E}_h}\int_E\gamma_h [\![{u}]\!]\!\cdot\![\![{v}]\!]\mathrm{~d}{s},
\end{align}
and 
	\begin{align}\label{2.bDG}
	\nonumber	b_{DG}(\mathbf{w};u,v)=\frac{1}{\delta+2}\Bigg(& \sum_{K\in \mathcal{T}_h} \int_{K} \mathbf{w}\cdot \nabla u v \mathrm{~d}x 
	+\sum_{K\in \mathcal{T}_h} \int_{\partial K} \hat{\mathbf{w}}_{h,u}^{up}{v} \mathrm{~d}s\\& -\sum_{K\in \mathcal{T}_h} \int_{K} \mathbf{w}\cdot \nabla v {u} \mathrm{~d}x 
	-\sum_{K\in \mathcal{T}_h} \int_{\partial K} \hat{\mathbf{w}}_{h,v}^{up}{u} \mathrm{~d}s\Bigg).
\end{align}
Here $\gamma_h=\frac{\gamma}{h_E}$ and the upwind flux
\begin{align*}
	\hat{\mathbf{w}}_{h,u}^{up}=\frac{1}{2}\left[\mathbf{w}\cdot\mathbf{n}_K -|\mathbf{w}\cdot \mathbf{n}_K|\right](u^e\!-\!u).
\end{align*}
with $\mathbf{w}=(w,w)^T$. 
The length of the edge $E$ is represented by the parameter $h_E$. In order to guarantee the stability of the formulation, the penalty parameter $\gamma$ is selected to be sufficiently large (see, e.g.,  \cite{Arn}). The following discrete norm is used for further error analysis: 
\[
|\!|\!|{ v}|\!|\!|_{DG}^2:= \sum_{K\in\mathcal{T}_h}\|\nabla_h v\|_{\L^2(\mathcal{T}_h)}^2 + \sum_{E\in\mathcal{E}_h}\gamma_h\|[\![v]\!]\|_{\L^2(E)}^2.\]

\begin{lemma}\label{2.dglem11}[Coercivity and Stability]
	\begin{enumerate}
		\item 	For any $ v \in V_{h}^{DG}$, the operator $a_{DG}$ is coercive, i.e.,
		\[ a_{DG}(v,v) \ge \alpha_a|\!|\!|{ v}|\!|\!|^2_{DG}\label{2.DGC},\] 
		for a positive constant, $\alpha_a\geq 0.$
		\item 	Assume that $f\in\L^2(0,T;\L^2(\Omega))$ and $u_0\in \L^2(\Omega)$, then the semi-discretized solution $u_h$ of the \eqref{2.GBHE} defined in \eqref{2.ncweakformfd} is stable. In other words, we have
		\begin{align}\label{2.DG13} 
			&\sup_{0\leq t\leq T}\|u_h(t)\|_{\L^2}^2+\nu\int_0^T\|\nabla_h u_h(t)\|_{\L^2(\mathcal{T}_h)}^2\mathrm{~d} t\leq\left(\|u_0\|_{\L^2}^2+\frac{1}{\nu}\int_0^T\|f(t)\|_{\L^2}^2\mathrm{~d}t\right)e^{\beta(1+\gamma^2)T}.
		\end{align}
	\end{enumerate}
\end{lemma}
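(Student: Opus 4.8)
The plan for part (1) is the classical symmetric interior penalty coercivity argument. Setting $u=v$ in \eqref{2.adg} gives
\[
a_{DG}(v,v)=\|\nabla_h v\|_{\L^2(\mathcal{T}_h)}^2-2\sum_{E\in\mathcal{E}_h}\int_E\{\!\{\nabla_h v\}\!\}\cdot[\![v]\!]\,\mathrm{~d}s+\sum_{E\in\mathcal{E}_h}\gamma_h\|[\![v]\!]\|_{\L^2(E)}^2 .
\]
For the cross term I would use the discrete trace (inverse) inequality on shape-regular meshes, $\|\{\!\{\nabla_h v\}\!\}\|_{\L^2(E)}\le C_{tr}h_E^{-1/2}\|\nabla_h v\|_{\L^2(\omega_E)}$ with $\omega_E$ the union of the cells sharing $E$, then Cauchy--Schwarz over the edges and Young's inequality, to obtain $2|\sum_{E\in\mathcal{E}_h}\int_E\{\!\{\nabla_h v\}\!\}\cdot[\![v]\!]|\le\epsilon\|\nabla_h v\|_{\L^2(\mathcal{T}_h)}^2+C_{tr}^2\epsilon^{-1}\sum_{E\in\mathcal{E}_h}h_E^{-1}\|[\![v]\!]\|_{\L^2(E)}^2$. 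Choosing $\epsilon\in(0,1)$ and then $\gamma$ large enough that $\gamma-C_{tr}^2/\epsilon>0$ (recall $\gamma_h=\gamma/h_E$) yields $a_{DG}(v,v)\ge\alpha_a|\!|\!|v|\!|\!|_{DG}^2$ with $\alpha_a>0$; this is precisely where the hypothesis that $\gamma$ is sufficiently large enters.

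For part (2) the argument is a transcription of the proof of Lemma \ref{2.stabilitysd}, now with $a_{CR}$ replaced by the coercive $a_{DG}$ and $b_{CR}$ by $b_{DG}$. Taking $\chi=u_h$ in \eqref{2.dgweakform}, the convective contribution vanishes: in \eqref{2.bDG} the two volume integrals cancel identically, and the two upwind boundary integrals coincide once the second and third arguments agree, so that $b_{DG}(u_h;u_h,u_h)=0$ by construction. Applying part (1) to $\nu a_{DG}(u_h,u_h)$, the pointwise identity \eqref{2.7} together with Young's inequality to handle $-\beta(c(u_h),u_h)$ (the term $\beta(1+\gamma)\|u_h\|_{\L^{2(\delta+1)}}^{\delta+1}\|u_h\|_{\L^2}$ being absorbed into $\beta\|u_h\|_{\L^{2(\delta+1)}}^{2(\delta+1)}$ at the cost of a $\tfrac{\beta(1+\gamma)^2}{2}\|u_h\|_{\L^2}^2$ contribution), and a discrete Poincar\'e--Friedrichs inequality $\|u_h\|_{\L^2}\le C_P|\!|\!|u_h|\!|\!|_{DG}$ followed by Young's inequality on $(f,u_h)$, I reach after integration in time
\begin{align*}
&\|u_h(t)\|_{\L^2}^2+\nu\alpha_a\int_0^t|\!|\!|u_h(s)|\!|\!|_{DG}^2\,\mathrm{~d}s+2\eta\int_0^t\big((K*\nabla_h u_h)(s),\nabla_h u_h(s)\big)\,\mathrm{~d}s\\
&\qquad\le\|u_0\|_{\L^2}^2+\frac{C}{\nu}\int_0^t\|f(s)\|_{\L^2}^2\,\mathrm{~d}s+\beta(1+\gamma^2)\int_0^t\|u_h(s)\|_{\L^2}^2\,\mathrm{~d}s .
\end{align*}

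The memory integral is nonnegative by the positivity assumption \eqref{2.pk}, hence can be dropped; Gronwall's inequality, taking the supremum over $t\in[0,T]$, and $|\!|\!|u_h|\!|\!|_{DG}^2\ge\|\nabla_h u_h\|_{\L^2(\mathcal{T}_h)}^2$ then give \eqref{2.DG13}. I expect the only genuinely DG-specific difficulty to be part (1), namely quantifying how large $\gamma$ must be in terms of the trace constant $C_{tr}$ (which depends on the polynomial degree and the shape-regularity of $\mathcal{T}_h$); once $\alpha_a>0$ is in hand, part (2) is a routine energy estimate. A minor bookkeeping point is that $\alpha_a$ (not a clean $\nu$) multiplies the energy norm and $C_P$ enters the forcing estimate, so the constants displayed in \eqref{2.DG13} should be read as absorbing $\alpha_a$ and $C_P$.
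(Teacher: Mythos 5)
Your proposal is correct and follows essentially the same route as the paper, which simply cites the standard symmetric-interior-penalty coercivity argument (the one you spell out via the trace inequality and the choice of a sufficiently large penalty $\gamma$) and then observes that $b_{DG}(u_h;u_h,u_h)=0$ so that the stability proof is identical to that of Lemma \ref{2.stabilitysd}. Your closing remark that the coercivity constant $\alpha_a$ and the discrete Poincar\'e constant are silently absorbed into the constants displayed in \eqref{2.DG13} is a fair observation about the paper's statement, not a gap in your argument.
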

\begin{proof}
	The proof of the coercivity of $a_{DG}(\cdot,\cdot)$ directly follows from  \cite[Section 3]{Arn}. Note that $b_{DG}(u_h,u_h,u_h) = 0$. The rest of the proof of the stability is identical to Lemma \ref{2.stabilitysd}.
\end{proof}
In the next lemma, we discuss the result required for the error estimates,
\begin{lemma}\label{2.lem111}
	There holds:
	\begin{align*}
		-\alpha[b_{DG}(u_h;u_h,w)-b_{DG}(v_h;v_h,w)]&\le  \frac{\nu}{2}|\!|\!| w|\!|\!|^2_{DG}+C(\alpha,\nu)\left(\|u_h\|^{\frac{8\delta}{4-d}}_{\L^{4\delta}}+\|v_h\|^{\frac{8\delta}{4-d}}_{\L^{4\delta}}\right)\|w\|_{\L^2}^2,\\
		A_{DG}(u_h,w)-A_{DG}(v_h,w)&\ge  \frac{\nu}{2}|\!|\!| w|\!|\!|^2_{DG} +\frac{\beta}{4}(\|{u}^{\delta}_hw\|_{\L^2}^2+\|v^{\delta}_hw\|_{\L^2}^2)\nonumber\\
		&\quad+\left(\beta\gamma-C(\beta,\alpha,\delta) - C(\alpha,\nu)\Big(\|u_h\|^{\frac{8\delta}{4-d}}_{\L^{4\delta}}+\|v_h\|^{\frac{8\delta}{4-d}}_{\L^{4\delta}}\Big)\right)\|w\|_{\L^2}^2,
	\end{align*}
	where $u_h,v_h\in V_{h}^{DG}$, $w=u_h-v_h$, $C(\alpha, \nu) = \left(\frac{4+d}{4\nu}\right)^{\frac{4+d}{4-d}}\left(\frac{4-d}{8}\right)(\frac{2^{\delta-1}C\alpha}{(\delta+2)(\delta+1)})^{\frac{4-d}{8}}$ and  $C(\beta,\alpha,\delta)= \frac{\beta}{2}2^{2\delta}(1+\gamma)^2(\delta+1)^2$ is a positive constant depending on parameters.
\end{lemma}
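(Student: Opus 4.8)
The plan is to mirror the argument used for the Crouzeix--Raviart case in Lemma \ref{2.crnclem111}, replacing the broken energy norm $\|\nabla_h\cdot\|_{\L^2(\mathcal{T}_h)}$ by the full DG norm $|\!|\!|\cdot|\!|\!|_{DG}$ and tracking the extra edge contributions coming from the upwind flux in $b_{DG}$ and from the consistency/penalty terms in $a_{DG}$. For the first bound I would write out the difference $b_{DG}(u_h;u_h,w)-b_{DG}(v_h;v_h,w)$ using the skew-symmetric splitting in \eqref{2.bDG}, so that the volume part reduces, as in \eqref{2.30}, to a difference of the form $(u_h^{\delta+1}-v_h^{\delta+1})\nabla_h w$ tested against $w$; Taylor's formula $u_h^{\delta+1}-v_h^{\delta+1}=(\delta+1)(\theta u_h+(1-\theta)v_h)^{\delta}w$ together with H\"older with exponents tuned to the discrete Sobolev/Gagliardo--Nirenberg embedding (giving the exponent $\tfrac{8\delta}{4-d}$ on the $\L^{4\delta}$ norms) and Young's inequality then produces $\tfrac{\nu}{2}|\!|\!|w|\!|\!|^2_{DG}+C(\alpha,\nu)(\|u_h\|^{\frac{8\delta}{4-d}}_{\L^{4\delta}}+\|v_h\|^{\frac{8\delta}{4-d}}_{\L^{4\delta}})\|w\|_{\L^2}^2$. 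The upwind edge terms $\hat{\mathbf w}^{up}_{h,u}$, $\hat{\mathbf w}^{up}_{h,v}$ are handled by noting that $[\mathbf w\cdot\mathbf n-|\mathbf w\cdot\mathbf n|]\le 0$, so after the same Taylor expansion they contribute terms with a favourable sign (or are absorbed into the jump part of $|\!|\!|w|\!|\!|_{DG}$ via a trace/inverse inequality); this is the main place where the DG proof genuinely differs from the CR one.

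For the second inequality I would decompose $A_{DG}(u_h,w)-A_{DG}(v_h,w)=\nu a_{DG}(w,w)+\alpha[b_{DG}(u_h;u_h,w)-b_{DG}(v_h;v_h,w)]-\beta(c(u_h)-c(v_h),w)$ with $w=u_h-v_h$. By the coercivity in part (1) of Lemma \ref{2.dglem11}, $\nu a_{DG}(w,w)\ge \nu\alpha_a|\!|\!|w|\!|\!|^2_{DG}$ (one may normalise $\alpha_a$ so that the leading constant reads $\nu$, exactly as the CR estimate uses $\nu\|\nabla_h w\|^2$). The nonlinear convective difference is bounded below by $-\tfrac{\nu}{2}|\!|\!|w|\!|\!|^2_{DG}-C(\alpha,\nu)(\cdots)\|w\|_{\L^2}^2$ from the first part. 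For the reaction term I would reuse verbatim the estimates \eqref{2.24}--\eqref{2.27} — these are purely pointwise in space and involve no derivatives, so they are insensitive to conforming vs.\ nonconforming discretisation — to obtain $-\beta(c(u_h)-c(v_h),w)\ge \beta\gamma\|w\|_{\L^2}^2+\tfrac{\beta}{4}(\|u_h^{\delta}w\|_{\L^2}^2+\|v_h^{\delta}w\|_{\L^2}^2)-\mathcal C(\beta,\gamma,\delta)\|w\|_{\L^2}^2$. Adding the three contributions and collecting the $\|w\|_{\L^2}^2$ coefficients gives precisely the claimed lower bound.

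The step I expect to be the real obstacle is controlling the upwind edge integrals in $b_{DG}(u_h;u_h,w)-b_{DG}(v_h;v_h,w)$: unlike the CR case there is no clean cancellation, and one must argue that, after the Taylor expansion of $u_h^{\delta+1}-v_h^{\delta+1}$, the numerical-flux contribution either has the sign that helps the coercivity bound (exploiting $\mathbf w\cdot\mathbf n-|\mathbf w\cdot\mathbf n|\le 0$) or can be dominated by $\varepsilon\sum_E\gamma_h\|[\![w]\!]\|_{\L^2(E)}^2$ plus lower-order $\|w\|_{\L^2}^2$ terms via a discrete trace inequality, choosing the penalty parameter $\gamma$ large enough (as already required in Lemma \ref{2.dglem11}). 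Once that edge estimate is in place, the rest is a bookkeeping exercise identical in structure to the proof of Lemma \ref{2.crnclem111}.
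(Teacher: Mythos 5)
Your proposal follows essentially the same route as the paper, which proves this lemma simply by declaring it ``similar to Lemma \ref{2.crnclem111}'' and transplanting the CR argument (Taylor expansion of $u_h^{\delta+1}-v_h^{\delta+1}$, H\"older/Young with the Gagliardo--Nirenberg exponent $\tfrac{8\delta}{4-d}$, the pointwise reaction estimates \eqref{2.24}--\eqref{2.27}, and coercivity of $a_{DG}$ in place of $\nu\|\nabla_h w\|^2$). In fact you go further than the paper by explicitly flagging and sketching a treatment of the two genuinely DG-specific issues it leaves silent --- the sign/absorption of the upwind edge terms and the normalisation of the coercivity constant $\alpha_a$ so the leading term reads $\tfrac{\nu}{2}|\!|\!|w|\!|\!|_{DG}^2$ --- so your write-up is, if anything, more complete than the paper's.
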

\begin{proof}
	The idea of proof is similar to Lemma \ref{2.crnclem111}.
\end{proof}
Finally, we 
state the a priori error estimate for the semi-discrete DG approximation.
\begin{theorem}\label{2.FDDGFEM}
	Assume that $u$ be the solution of \eqref{2.weaksolution}, then the error incurred by the DGFEM approximation $u_h^{DG}$ tends to $0$ as  $h\rightarrow 0$, i.e.,
	\begin{align*}
	\|u_h^{DG}-u\|_{\L^{\infty}(0,T;\L^2(\Omega))}^2 +	|\!|\!|u_h^{DG}-u|\!|\!|_{DG}^2\leq C\bigg\{\|u^h_0-u_0\|_{\L^2}^2+h^2~\Theta(u)\bigg \},
\end{align*}
	where $C$ is a positive constant, independent of $h$, and $\Theta(u)$ is given in \eqref{2.th}.
\end{theorem}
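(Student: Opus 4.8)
The plan is to reproduce the argument of Theorem~\ref{2.SDNCFEM} step by step, replacing each Crouzeix--Raviart ingredient by its DG analogue. First I would introduce an auxiliary $W\in V_h^{DG}$, namely the elliptic projection of $u$ associated with the bilinear form $a_{DG}(\cdot,\cdot)$, which is coercive by Lemma~\ref{2.dglem11}(1) provided the penalty parameter is chosen large enough. Standard DG approximation theory then gives $|\!|\!|W-u|\!|\!|_{DG}\le Ch\|u\|_{\L^2(0,T;\H^2(\Omega))}$, $\|W-u\|_{\L^\infty(0,T;\L^2(\Omega))}\le Ch^2\|u\|_{\L^\infty(0,T;\H^2(\Omega))}$, and a matching bound on $\partial_t(W-u)$ in terms of $\partial_tu$. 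Writing $u_h^{DG}-u=(u_h^{DG}-W)+(W-u)$ and applying the triangle inequality in both norms, the $(W-u)$ pieces are already $O(h)$, so it remains to bound $|\!|\!|u_h^{DG}-W|\!|\!|_{DG}$ and $\|u_h^{DG}-W\|_{\L^\infty(0,T;\L^2(\Omega))}$.

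Next I would derive the consistency identity for the exact solution. By item~2 of Theorem~\ref{2.weaksolution}, $u\in\L^2(0,T;\H^2(\Omega))$, so all jumps $[\![u]\!]$ and $[\![\partial u/\partial\mathbf{n}]\!]$ vanish; testing the strong form against $\chi\in V_h^{DG}$ and integrating by parts element-by-element yields
\[
(\partial_t u,\chi)+A_{DG}(u,\chi)+\eta\big((K*\nabla_h u)(t),\nabla_h\chi\big)=(f,\chi)+\mathcal{R}_h(u;\chi),
\]
where, the symmetric interior penalty form $a_{DG}$ being consistent (no diffusion residual survives), $\mathcal{R}_h(u;\chi)$ collects only the convective upwind-flux edge terms and the memory-flux edge term $\eta\sum_{E}\int_E\big(K*\{\!\{\nabla_h u\}\!\}\big)\cdot[\![\chi]\!]$, in analogy with the boundary terms on the right-hand side of \eqref{2.ncexact}. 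Subtracting \eqref{2.dgweakform}, choosing $\chi=u_h^{DG}-W$ and writing $u_h^{DG}-u=(u_h^{DG}-W)+(W-u)$, one obtains an energy identity whose left-hand side carries $\frac{1}{2}\frac{d}{dt}\|u_h^{DG}-W\|_{\L^2}^2$, the difference $A_{DG}(u_h^{DG},\chi)-A_{DG}(W,\chi)$ and the memory term $\eta\big((K*\nabla_h\chi)(t),\nabla_h\chi(t)\big)$, and whose right-hand side carries $-(\partial_t(W-u),\chi)$, the projection difference $A_{DG}(W,\chi)-A_{DG}(u,\chi)$, the memory cross term $\eta\int_0^tK(t-\tau)a_{DG}(W(\tau)-u(\tau),\chi(t))\,d\tau$, and $\mathcal{R}_h(u;\chi)$.

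The bulk of the proof is then the term-by-term estimation, which mirrors Theorem~\ref{2.SDNCFEM}. For $A_{DG}(u_h^{DG},\chi)-A_{DG}(W,\chi)$ I would apply Lemma~\ref{2.lem111} to bound it below by $\frac{\nu}{2}|\!|\!|\chi|\!|\!|_{DG}^2+\frac{\beta}{4}\big(\|(u_h^{DG})^{\delta}\chi\|_{\L^2}^2+\|W^{\delta}\chi\|_{\L^2}^2\big)+\big(\beta\gamma-C(\beta,\alpha,\delta)-C(\alpha,\nu)(\|u_h^{DG}\|_{\L^{4\delta}}^{\frac{8\delta}{4-d}}+\|W\|_{\L^{4\delta}}^{\frac{8\delta}{4-d}})\big)\|\chi\|_{\L^2}^2$, the indefinite $\|\chi\|_{\L^2}^2$ coefficient being moved to the right to feed Gronwall. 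The convection and reaction parts of the projection difference are handled by Taylor's formula together with H\"older's and Young's inequalities exactly as in the estimates of $J_3$--$J_7$ (an inverse inequality being used on the $b_{DG}$ difference after integration by parts), while $-(\partial_t(W-u),\chi)$ and the diffusion part $a_{DG}(W-u,\chi)$ are treated by Cauchy--Schwarz, coercivity and Young. The DG edge residuals in $\mathcal{R}_h(u;\chi)$ are controlled by scaled (discrete) trace inequalities together with the edge-projection estimate \eqref{2.P0projection} and the Bramble--Hilbert lemma, producing terms of order $h^2\|u\|_{\H^2}^2$ and $h^2\|u^{\delta+1}\|_{\H^1}^2$ plus a $\frac{\nu}{4}|\!|\!|\chi|\!|\!|_{DG}^2$ that is absorbed on the left. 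The memory cross term is split by Lemma~\ref{2.l1} and Young's inequality with the constant $C_K=\int_0^T|K(t)|\,dt$, and the genuine memory term $\eta(K*\nabla_h\chi,\nabla_h\chi)$ is dropped after integration in time by the positivity property \eqref{2.pk}. Integrating from $0$ to $t$, transferring $-2(W(t)-u(t),\chi(t))$ to the left via the AM--GM inequality as in Theorem~\ref{2.SDNCFEM}, and invoking Gronwall's inequality gives $\|u_h^{DG}-W\|_{\L^\infty(0,T;\L^2(\Omega))}^2+|\!|\!|u_h^{DG}-W|\!|\!|_{DG}^2\le C\{\|u^h_0-u_0\|_{\L^2}^2+h^2\Theta(u)\}$; combined with the projection bound this is the claimed estimate, and convergence $u_h^{DG}\to u$ as $h\to0$ is immediate.

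The main obstacle, relative to the NCFEM proof, is that the DG space lacks the zero-mean-jump property $\int_E[\![\chi]\!]=0$ enjoyed by the Crouzeix--Raviart space, which in \eqref{2.ncexact} was precisely what allowed one to subtract the edge means of $\{\!\{\nabla_h u\}\!\}$ and of $K*\{\!\{\nabla_h u\}\!\}$ and thereby gain the extra power of $h$ that renders the boundary residuals $O(h^2)$. For the DG formulation this gain must be recovered by other means --- either by exploiting an $\L^2$-orthogonality of the elliptic projection $W$ against $\nabla_h V_h^{DG}$, or by relying on the SIPG penalty to absorb the jump factors while bounding $\{\!\{\nabla_h u\}\!\}$ and $K*\{\!\{\nabla_h u\}\!\}$ through scaled trace inequalities and the $\H^2$-regularity of $u$; checking that the convective upwind-flux and memory edge contributions are then genuinely $O(h)$, rather than $O(1)$, is the delicate point of the whole argument. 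One also has to ensure that the Gronwall exponent $\exp\{\int_0^TC(\alpha,\nu)(\|u_h^{DG}\|_{\L^{4\delta}}^{\frac{8\delta}{4-d}}+\|W\|_{\L^{4\delta}}^{\frac{8\delta}{4-d}})\,dt\}$ is finite, which follows from the stability estimate of Lemma~\ref{2.dglem11}(2) and the regularity $u\in\L^\infty(0,T;X_{2(\delta+1)})$ provided by Theorem~\ref{2.weaksolution}.
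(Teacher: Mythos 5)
Your proposal is correct and follows essentially the same route as the paper's own (very terse) proof: introduce the elliptic projection $W$, subtract the consistency identity for $u$ from the DG scheme \eqref{2.dgweakform}, test with $\chi=u_h^{DG}-W$, and then invoke Lemma \ref{2.lem111} and repeat the term-by-term estimates, kernel positivity, and Gronwall argument of Theorem \ref{2.SDNCFEM}. You are in fact more careful than the paper, which asserts full consistency and omits the convective upwind-flux and memory edge residuals that you correctly isolate in $\mathcal{R}_h(u;\chi)$ and identify as the only genuinely new technical point relative to the NCFEM case.
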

\begin{proof}
Using the formulation \eqref{2.dgweakform}, we have 
$$
 ({\partial_t}u_h^{DG}(t), \chi)+A_{DG}(u^{DG}_h(t), \chi)+\eta\Delta t\int_0^tK(t-s)a_{DG}( u_h^{DG}(s), \chi) \mathrm{~d}s-(f(t),\chi) = 0,  
$$
	$\forall ~\chi\in V_h^{DG}$. If $u\in \H_0^1(\Omega)\cap \H^2(\Omega)$ satisfies \eqref{2.ncweakform}, then we have that, $\forall ~t\in(0,T),$
	$$
 ({\partial_t}u(t), \chi)+A_{DG}(u(t), \chi)+\eta\Delta t \int_0^tK(t-s)a_{DG}( u(s), \chi) \mathrm{~d}s-(f(t),\chi) = 0.
$$
Subtracting the above two equations, we get 
\begin{align}
\nonumber({\partial_t}(u_h^{DG}(t)-u(t)), \chi)+A_{DG}(u_h^{DG}(t), \chi) - A_{DG}( u(t), \chi) \\+\eta\Delta t\int_0^tK(t-s)a_{DG}( u_h^{DG}(s)-u(s), \chi)\mathrm{~d}s = 0.	
\end{align}
Making a specific choice $\chi = u_h^{DG} -W $, and rewriting $u_h^{DG}-u = u_h^{DG}-W+W-u$ gives
\begin{align}
	\nonumber&({\partial_t}(u_h^{DG}(t)-W(t)),\chi)+A_{DG}(u_h^{DG}(t),\chi) - A_{DG}(W(t),\chi) \\& +\eta\Delta t\int_0^tK(t-s)a_{DG}( u_h^{DG}(s)-W(s), \chi) \mathrm{~d}s = - ({\partial_t}(W(t)-u(t)),\chi) \nonumber\\&-A_{DG}(W(t),\chi)+A_{DG}(u(t), \chi)-\eta\Delta t\int_0^tK(t-s)a_{DG}( W(s)-u(s),\chi)\mathrm{~d}s.
\end{align}
To prove the desired result, we proceed similarly as in Theorem \ref{2.SDNCFEM} and an application of Lemma \ref{2.lem111}.
\end{proof}
\subsubsection{Fully-discrete DGFEM}
The fully-discrete weak formulation of (\ref{2.GBHE}), is given as: Find $(u^{DG}_h)^k= u_h^k\in V_h^{DG}$ (for simplicity of notation take $(u^{DG}_h)^k= u_h^k$), such that
\begin{align}\label{2.dgfdweakform}
(\overline{\partial}u_h^k, \chi)+ A_{DG}(u_h^k(t),\chi)+\eta \left(\sum_{j=1}^{k}\omega_{k j}a_{DG}\left(u_h^j (t),v\right)\right) = (f^k,\chi),\nonumber&\\
(u_h(0),\chi)=(u_h^0,\chi),
\end{align} 
where, $\omega_{k j}=\frac{1}{(\Delta t)^2 } \int_{t_{k-1}}^{t_k} \int_{t_{j-1}}^{\min \left(t, t_j\right)} K(t-s) \mathrm{~d}s \mathrm{~d}t$, for $1\leq k\leq N$, $f^k = (\Delta t)^{-1}\int_{t_{k-1}}^{t_k} f(s)  \mathrm{~d}s$ and $A_{DG}$ is  defined in \eqref{2.ADGf}.
Similar to \eqref{2.gfds}, we define DG approximated solution as
\begin{align}\label{2.gfdsdg}
	u^{DG}_{kh} |_{[t_{k-1},t_k]} = u_h^{k-1}+\left(\frac{t-t_{k-1}}{\Delta t}\right)(u_h^{k}-u_h^{k-1}), \qquad 1\leq k\leq N, \qquad \qquad \forall~ t\in [t_{k-1},t_k]. 
\end{align}
The error estimates for the formulation \eqref{2.dgfdweakform} are discussed in the next two results.
\begin{lemma}\label{2.SDGthm7.2} 
	Let $u$ satisfy the hypothesis of Lemma \ref{2.thm7.2}. Then, the
following bound holds:
	\begin{align*}
	\nonumber\|u_h^{DG} - u^{DG}_{kh}\|_{\L^{\infty}(0,T;\L^2(\Omega))}^2 +|\!|\!|u_h^{DG} - u^{DG}_{kh}|\!|\!|_{DG}^2&\leq C(f,u_0)\Big((\Delta t)^2 + \eta^2(\Delta t)^2\sup_{k,j} \omega_{kj}^2\Big).
\end{align*}
\end{lemma}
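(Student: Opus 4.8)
The plan is to mirror the argument of Lemma~\ref{2.thm7.2} line for line, replacing the Crouzeix--Raviart bilinear form $a_{CR}$ by the DG form $a_{DG}$, the energy norm $|\!|\!|\cdot|\!|\!|_{CR}$ by $|\!|\!|\cdot|\!|\!|_{DG}$, and invoking Lemma~\ref{2.lem111} wherever Lemma~\ref{2.crnclem111} was used. Because $a_{DG}$ is consistent for the exact semi-discrete solution $u_h^{DG}$ (no boundary residual terms appear, unlike the genuinely non-conforming CR case where Theorem~\ref{2.SDNCFEM} had to carry the edge integrals $\int_{\partial K}\nu\,\partial u/\partial n_K\,\chi$ etc.), the proof here is actually cleaner: the consistency terms that caused trouble in Theorem~\ref{2.SDNCFEM} are absent when comparing $u_h^{DG}$ with $u^{DG}_{kh}$, since both live in $V_h^{DG}$.

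First I would, as in Step~1 of Lemma~\ref{2.thm7.2}, integrate the semi-discrete DG scheme~\eqref{2.dgweakform} over $[t_{k-1},t_k]$, write the fully-discrete scheme~\eqref{2.dgfdweakform} at $t=t_k$, and subtract to obtain an error equation for $u_h^{DG}(t_k)-u_h^k$. Testing with $\chi = u_h^{DG}(t_k)-u_h^k$ and using coercivity of $a_{DG}$ (Lemma~\ref{2.dglem11}(1)) gives control of $\nu\alpha_a|\!|\!|u_h^{DG}(t_k)-u_h^k|\!|\!|_{DG}^2$ on the left. The right-hand side splits into exactly the same families of terms as before: (i) the time-quadrature defect $\int_{t_{k-1}}^{t_k}\nabla_h u_h^{DG}(t)\,\mathrm{d}t-\Delta t\,\nabla_h u_h^{DG}(t_k)$, estimated by writing it as a double integral of $\partial_t\nabla_h u_h^{DG}$ and bounding by $(\Delta t)^2\int_{t_{k-1}}^{t_k}\|\partial_t\nabla_h u_h^{DG}(s)\|^2\,\mathrm{d}s$ as in~\eqref{2.int1}; (ii) the memory-quadrature defect, controlled via $\overline{K}_{kj}$ and the positivity of the kernel~\eqref{2.pk}, exactly as in the bound preceding~\eqref{2.fdest}; (iii) the convective defects $\int_{t_{k-1}}^{t_k}B_{DG}(u_h^{DG}(t))\,\mathrm{d}t-\Delta t\,B_{DG}(u_h^{DG}(t_k))$ and $\Delta t(B_{DG}(u_h^{DG}(t_k))-B_{DG}(u_h^k))$, handled by integration by parts, the inverse inequality, the discrete Gagliardo--Nirenberg inequality~\cite{CHF} and Young's inequality, with the constant $C(\alpha,\nu)$ quoted in Lemma~\ref{2.thm7.2}; and (iv) the reaction defects, treated by the splitting~\eqref{2.33} and Taylor's formula. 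Summing over $k=1,\dots,N$, absorbing the $|\!|\!|\cdot|\!|\!|_{DG}$ terms, and applying the discrete Gronwall inequality yields, in analogy with~\eqref{2.fdest},
\begin{align*}
\|u_h^{DG}(t_N)-u_h^N\|_{\L^2}^2+\nu\,|\!|\!|u_h^{DG}(t_k)-u_h^k|\!|\!|_{DG}^2\leq C(f,u_0)\Big((\Delta t)^2+\eta^2(\Delta t)^2\sup_{k,j}\omega_{kj}^2\Big),
\end{align*}
where the regularity hypotheses of Lemma~\ref{2.thm7.2} ($\delta\in[1,\infty)$ for $d=2$, $\delta\in[1,2]$ for $d=3$, $u_0\in\H^2\cap\H_0^1$, $f\in\H^1(0,T;\L^2(\Omega))$) guarantee via Theorem~\ref{2.weaksolution}(3) and Lemma~\ref{2.dglem11}(2) that $\partial_t\nabla_h u_h^{DG}$, $u_h^{DG,\delta}\partial_t u_h^{DG}$ and $\partial_t u_h^{DG}$ are square-integrable in time and that the Gronwall exponent is finite.

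Then, in Step~2, I would pass from nodal error to the continuous-in-time error. Introduce the linear interpolant $\mathcal{I}u_h^{DG}(t)=u_h^{DG}(t_{k-1})+\frac{t-t_{k-1}}{\Delta t}(u_h^{DG}(t_k)-u_h^{DG}(t_{k-1}))$ on $[t_{k-1},t_k]$, split $u_h^{DG}-u^{DG}_{kh}=(u_h^{DG}-\mathcal{I}u_h^{DG})+(\mathcal{I}u_h^{DG}-u^{DG}_{kh})$, and use the triangle inequality. The first piece is $O((\Delta t)^2)$ by the standard interpolation estimates \cite[Lemma~3.2, Corollary~3.1]{YGU} (in the $|\!|\!|\cdot|\!|\!|_{DG}$ and $\L^\infty(0,T;\L^2)$ norms, respectively), and the second piece reduces to $\sup_i\|u_h^{DG}(t_i)-u_h^i\|^2$ and $\Delta t\sum_i|\!|\!|u_h^{DG}(t_i)-u_h^i|\!|\!|_{DG}^2$, both already bounded in Step~1. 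Combining gives the claim. The main obstacle is purely bookkeeping: one must check that the upwind-flux terms in $b_{DG}$ (which do not appear in $b_{CR}$) can still be absorbed after integration by parts into $\frac{\nu}{8}|\!|\!|w|\!|\!|_{DG}^2$ plus an $\L^2$ term with a $C(\alpha,\nu)$-type constant — this is exactly the content of Lemma~\ref{2.lem111} and the DG analogue of the estimate used in the display before~\eqref{2.fdest}, so no new difficulty arises beyond carefully tracking the jump/average contributions in $|\!|\!|\cdot|\!|\!|_{DG}$.
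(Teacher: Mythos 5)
Your proposal is correct and follows exactly the route the paper intends: the paper's own proof of this lemma is simply the remark that it proceeds as in Lemma \ref{2.thm7.2} with the CR forms and norm replaced by their DG counterparts and Lemma \ref{2.lem111} in place of Lemma \ref{2.crnclem111}, which is precisely the substitution you carry out (including the correct observation that no consistency/edge-residual terms arise here since both $u_h^{DG}$ and $u^{DG}_{kh}$ lie in $V_h^{DG}$). No gap.
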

\begin{proof}
	The idea of the proof follows, similar to the Lemma \ref{2.thm7.2}.
\end{proof}
\begin{theorem}\label{2.DGthm7.3}
	Let $u$ satisfy the hypothesis of Lemma \ref{2.thm7.2}. Then, the
 following bound holds:
	\begin{align}\label{2.fdesdg}
		\nonumber \|u- u^{DG}_{kh}\|_{\L^{\infty}(0,T;\L^2(\Omega))}^2 + |\!|\!|u - u^{DG}_{kh}|\!|\!|_{DG}^2&\leq C(f,u_0)\Big((\Delta t)^2 + \eta^2(\Delta t)^2\sup_{k,j} \omega_{kj}^2\Big).
	\end{align}
where $C$ is a constant independent of $\Delta t$ and $h$.
\end{theorem}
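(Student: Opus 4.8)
The plan is to derive \eqref{2.fdesdg} in exactly the way the fully-discrete NCFEM bound of Theorem~\ref{2.thm7.3} is obtained from its two ingredients: I would insert the semi-discrete DG solution $u_h^{DG}$ as a bridge between the exact solution $u$ and the fully-discrete interpolant $u^{DG}_{kh}$. Since $u$ is assumed to satisfy the hypotheses of Lemma~\ref{2.thm7.2}, both the semi-discrete spatial estimate of Theorem~\ref{2.FDDGFEM} and the DG time-discretization estimate of Lemma~\ref{2.SDGthm7.2} are available, so the whole argument reduces to combining them. Concretely, I would write
\begin{align*}
u-u^{DG}_{kh}=\big(u-u_h^{DG}\big)+\big(u_h^{DG}-u^{DG}_{kh}\big)
\end{align*}
and apply the triangle inequality in both the $\L^{\infty}(0,T;\L^2(\Omega))$ norm and the broken energy norm $|\!|\!|\cdot|\!|\!|_{DG}$. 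The latter is a genuine norm on $V_h^{DG}$ enlarged by $\H^2(\Omega)\cap\H_0^1(\Omega)$, because for $u\in\H_0^1(\Omega)$ the interior jumps $[\![u]\!]$ vanish; hence, after squaring with $(a+b)^2\le 2a^2+2b^2$, the claim reduces to estimating the two squared contributions separately.

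\textbf{Temporal contribution.} For $u_h^{DG}-u^{DG}_{kh}$ I would invoke Lemma~\ref{2.SDGthm7.2} verbatim, which yields
\begin{align*}
\|u_h^{DG}-u^{DG}_{kh}\|_{\L^{\infty}(0,T;\L^2(\Omega))}^2+|\!|\!|u_h^{DG}-u^{DG}_{kh}|\!|\!|_{DG}^2\le C(f,u_0)\Big((\Delta t)^2+\eta^2(\Delta t)^2\sup_{k,j}\omega_{kj}^2\Big),
\end{align*}
that is, precisely the right-hand side of \eqref{2.fdesdg}; this factor carries the entire temporal rate together with the contribution of the positive implicit quadrature weights $\omega_{kj}$ approximating the weakly singular memory. \textbf{Spatial contribution and assembly.} For $u-u_h^{DG}$ I would apply Theorem~\ref{2.FDDGFEM}; with the initial projection chosen so that $\|u_0^h-u_0\|_{\L^2}\to0$ and under the stated regularity of $u$, this contribution tends to zero as $h\to0$, which drives $u_h^{DG}\to u$. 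Collecting the two contributions establishes both the convergence of $u^{DG}_{kh}$ to $u$ as $\Delta t,h\to0$ and the estimate \eqref{2.fdesdg}, whose rate is governed by the temporal bound of Lemma~\ref{2.SDGthm7.2}.

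\textbf{Main obstacle.} The triangle inequality itself is routine; the genuine difficulty is to guarantee that the constant $C(f,u_0)$ supplied by both ingredients is independent of $h$, as \eqref{2.fdesdg} requires. This rests on three $h$-uniform facts: the coercivity $a_{DG}(v,v)\ge\alpha_a|\!|\!|v|\!|\!|_{DG}^2$ of Lemma~\ref{2.dglem11}, which holds once the penalty parameter $\gamma$ is fixed sufficiently large uniformly in $h$; the monotonicity estimate of Lemma~\ref{2.lem111}, whose constants depend only on $\alpha,\beta,\gamma,\delta$ and on $h$-uniform $\L^{4\delta}$-bounds of the discrete solutions delivered by the stability estimate \eqref{2.DG13}; and the positivity \eqref{2.pk} of the kernel, mirrored in the nonnegativity of the weights $\omega_{kj}$, which allows the memory terms to be discarded with the correct sign in both the semi-discrete and fully-discrete energy identities. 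Verifying that these three ingredients transfer the $h$-uniform Gronwall constants of Lemma~\ref{2.SDGthm7.2} and Theorem~\ref{2.FDDGFEM} into the final combined bound is where the real work lies.
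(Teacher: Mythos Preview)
Your proposal is correct and follows exactly the paper's approach: the paper's proof is the single line ``Combining Lemma~\ref{2.SDGthm7.2} and Theorem~\ref{2.FDDGFEM} leads to the stated result,'' which is precisely your strategy of splitting $u-u^{DG}_{kh}=(u-u_h^{DG})+(u_h^{DG}-u^{DG}_{kh})$ and invoking those two results. Your additional discussion of the $h$-uniformity of the Gronwall constants is a helpful elaboration but does not depart from the paper's route.
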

\begin{proof}
Combining Lemma \ref{2.SDGthm7.2} and Theorem \ref{2.FDDGFEM} leads to the stated result.
\end{proof}

\section{Numerical studies}\label{2.secnum}\setcounter{equation}{0} 
In this section, we present numeric findings to substantiate the results established in Theorem \ref{2.thm7.3} and Theorem \ref{2.DGthm7.3}. These computations were performed using the open-source finite element library FEniCS \cite{ABJ}.

In all the examples, we discretize the time derivative using a backward Euler discretization scheme and space using CRFEM or DGFEM. We adopt a temporal discretization scheme with uniform time stepping, $\Delta t = \frac{T}{M},$  such that $t_k = k\Delta t$, where $T$ is total time and $M$ is the number of time steps. The spatial discretization parameter is denoted as $h$. In all the experiments we set $\Delta t \propto h$.
\subsection{Weakly singular kernel}\label{3.1}
Consider the problem \eqref{2.GBHE} defined on the domain $\Omega \times (0,T) = (0,1)^d\times (0,1)$. For the particular choice of the Kernel $K(t) =\frac{1}{\sqrt{t}},$  the approximated solutions $u_h^{CR}$ and $u_h^{DG}$ are obtained using the positive quadrature rule for the kernel term. The error incurred between the numerical solution and the exact solution in 2D and 3D are validated for two different expressions of the exact solution.
\begin{align*}
	\text{Type I:   }u = (t^3-t^2+1)\prod\limits_{i=1}^{d}\sin(\pi x_i),  \qquad 	\text{Type II:   }u = t\sqrt{t}\prod\limits_{i=1}^{d}\sin(2\pi x_i).
\end{align*}
	\begin{figure}
	\begin{center}
		\includegraphics[width=0.450\textwidth]{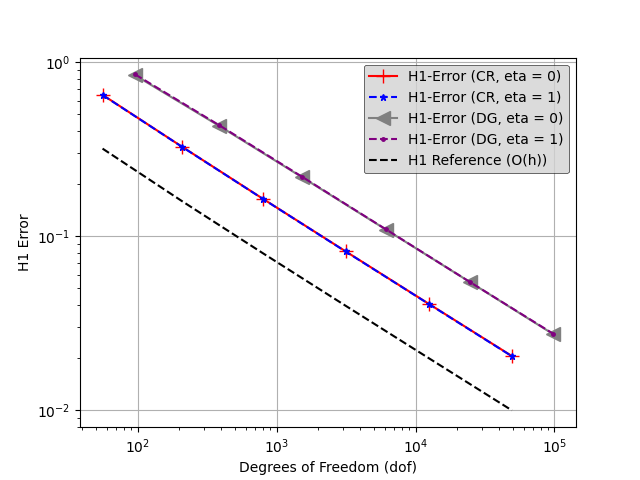}
		\includegraphics[width=0.450\textwidth]{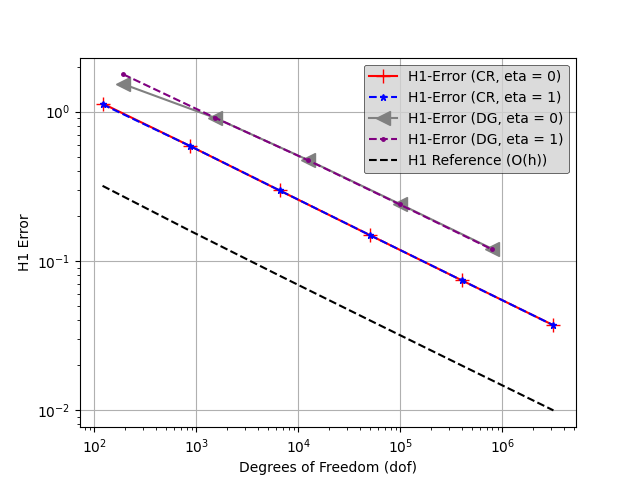}
		\caption{Rate of convergence plot for the numerical solutions $u_h^{CR}$ and  $u_h^{DG}$for the solution defined in Type I   in 2D and 3D.}\label{fig1}
	\end{center}
\end{figure}
	\begin{figure}
	\begin{center}
			\includegraphics[width=0.450\textwidth]{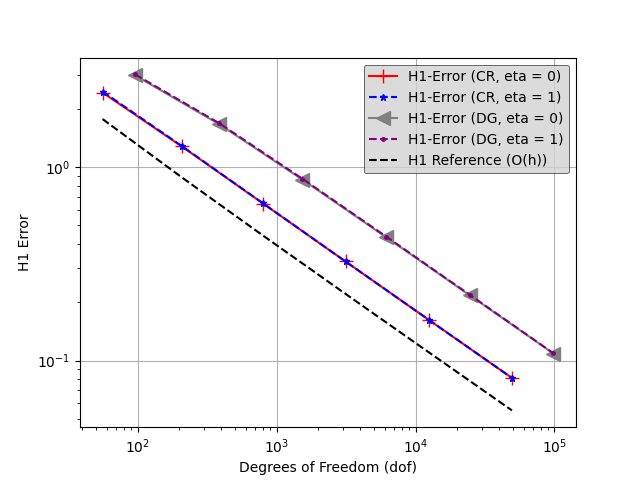}
			\includegraphics[width=0.450\textwidth]{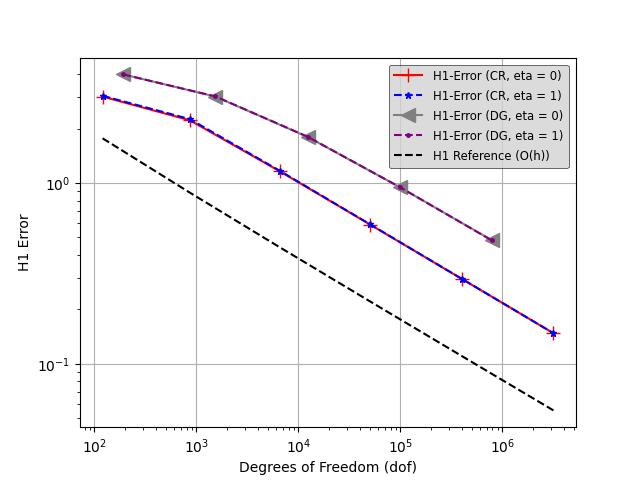}
		\caption{Rate of convergence plot for the numerical solutions $u_h^{CR}$ and  $u_h^{DG}$ for the solution defined in Type II   in 2D and 3D  .}\label{fig2}
	\end{center}
\end{figure}
We set the values of parameters as $\alpha  = \delta = \nu= \beta = 1,$ and $\gamma= 0.5$. Figures \ref{fig1} and \ref{fig2} represent the plot of error in energy norm against degree of freedom for Type I and Type II, respectively. The error in energy norm decreases with the rate of $O(h)$. 
A maximum number of three Newton iterations is demanded to acquire the desired tolerance of $10^{-10}$.
	\subsection{Application to the fractional time derivative}
	The proposed theory in this paper also holds for the following time fractional GBHE with memory given by
\begin{align}\label{2.GBHEFr}
	\mathcal{L} u(x,t)  + \partial_t^{\mu} u(x,t)=f(x,t),  \quad (x,t)\in\Omega\times(0,T),
\end{align}
where, $\Omega= (0,1)^d$ and $T =1$. The expression $\partial_t^\mu$ 
denotes the left-sided Caputo fractional derivative (Pg. 81; \cite{KST} and \cite{BRZ}) of order $\mu\ge 0$ with respect to $t$ defined as:
$$
\partial_t^{\mu} u(t) = \frac{1}{\Gamma(1-\mu)}\int_0^t \frac{1}{(t-\tau)^{\mu}} \frac{\mathrm{du(\tau)}}{\mathrm{d}\tau} \mathrm{d}\tau,
$$
where $\Gamma(.)$ represents the Gamma function. 
 The discretization of the fractional derivative term is carried out in a similar manner to that of the memory term. The plots of the error estimates (Figure \ref{fig3} and Figure \ref{fig4}) demonstrate the first-order convergence for a fractional derivative of order, $\mu = \frac{1}{2}$ and the weakly singular kernel  $K(t)= \frac{1}{\sqrt{t}}$ for the solutions defined in \eqref{3.1}.
	\begin{figure}
	\begin{center}
			\includegraphics[width=0.450\textwidth]{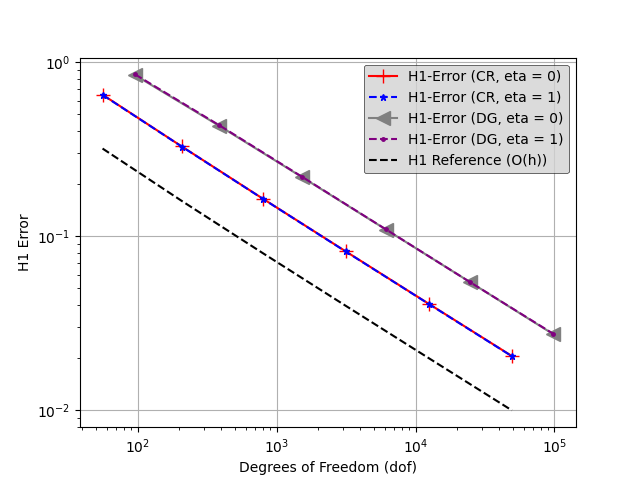}
			\includegraphics[width=0.450\textwidth]{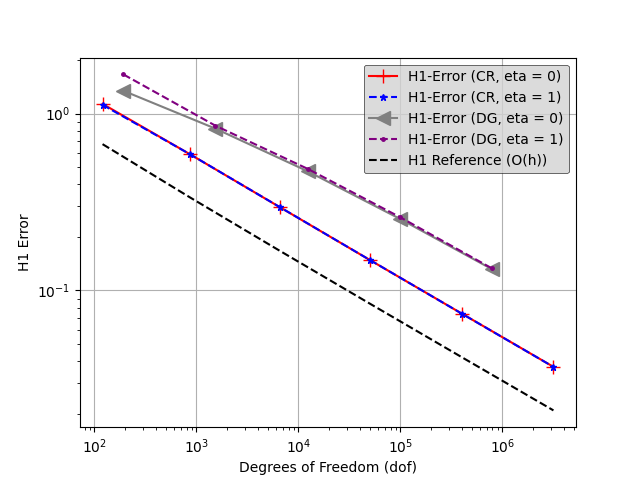}
	\caption{Rate of convergence plot for the numerical solutions $u_h^{CR}$ and  $u_h^{DG}$ for the Caputo fractional time derivative term for the solution defined in Type I   in 2D and 3D .}\label{fig3}
\end{center}
	\end{figure}
	\begin{figure}
	\begin{center}
		\includegraphics[width=0.450\textwidth]{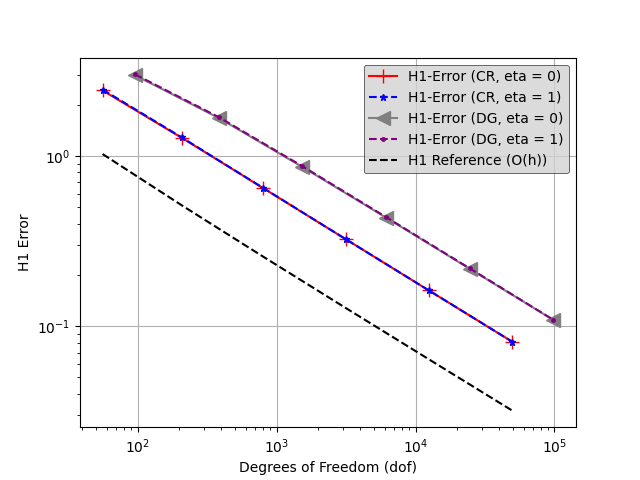}
		\includegraphics[width=0.450\textwidth]{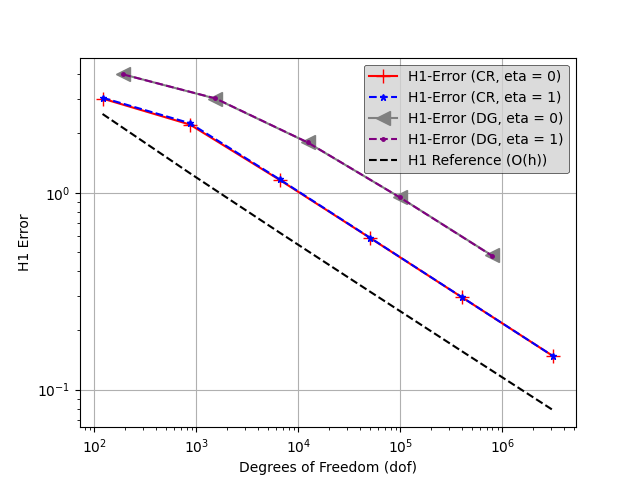}
		\caption{Rate of convergence plot for the numerical solutions $u_h^{CR}$ and  $u_h^{DG}$ for the Caputo fractional time derivative term for the solution defined in Type II   in 2D and 3D .}\label{fig4}
	\end{center}
\end{figure}
	\subsection{Solving GBHE  with Non-Homogenous Boundary Conditions}
	Consider the GBHE with memory defined in \eqref{2.GBHE} on the domain $\Omega = [0,1]^d \times (0,T) $. 
Let $Re$ be the Reynolds number and the kinematic viscosity coefficient is defined as  $\nu := \frac{1}{Re}$. For the 2D case, we set $Re = 50, 100$ and the exact solution \cite{Yta} is taken to be, 
	\begin{align}	 \label{2.sol3}
			u(t,x,y) = \frac{1}{1+e^{\frac{Re(x+y-t)}{2}}}, \quad (x,y)\in \Omega,\quad t\geq 0.
	\end{align}

	where $u$ represents the velocity.
	The initial value, boundary value and the external force $f$ are manufactured by the exact solution \eqref{2.sol3}.
The approximated solution at $T=1$ is shown in Figure \ref{fig8}. It reflects a  notable increase across the line $x + y = 1$ for the Reynolds number $Re = 50$ and $Re= 100.$ The error plots in Figure \ref{fig9} (Left panel) show the convergence rate of $O(h)$ for both the Reynolds numbers for GBHE with and without memory  

Analogously, the computed solution using DGFEM has been demonstrated for the 3D case, where 
\[	u(t,x,y,z) = \frac{1}{1+e^{\frac{Re(x+y+z-t)}{2}}}, \quad (x,y)\in \Omega,\quad t\geq 0. \]
 The solution at $T=1$ is shown in Figure \ref{fig10} and the error plots have been illustrated in Figure \ref{fig9} (Right panel).
	\begin{figure}
	\begin{center}
		\includegraphics[width=0.490\textwidth]{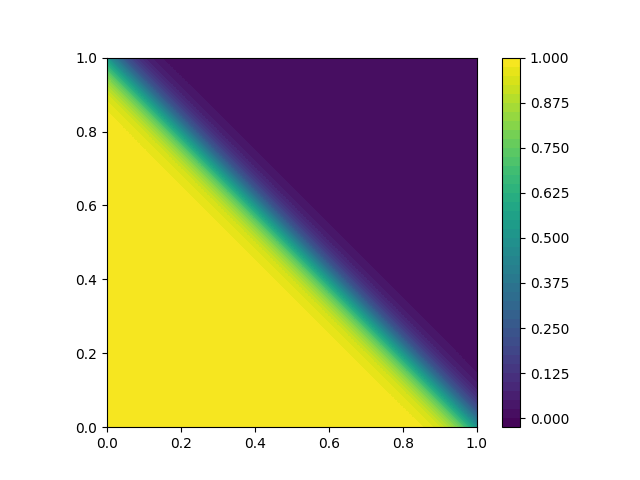}
		\includegraphics[width=0.490\textwidth]{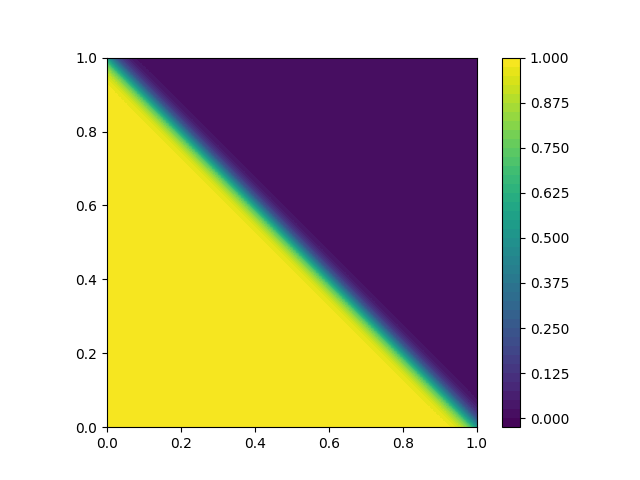}
		\caption{Approximated solution at $T=1,$ for GBHE 2D with memory $(\eta=1)$ with Re = 50 and Re = 100 respectively.}\label{fig8}
	\end{center}
\end{figure}

	\begin{figure}
	\begin{center}
		\includegraphics[width=0.490\textwidth]{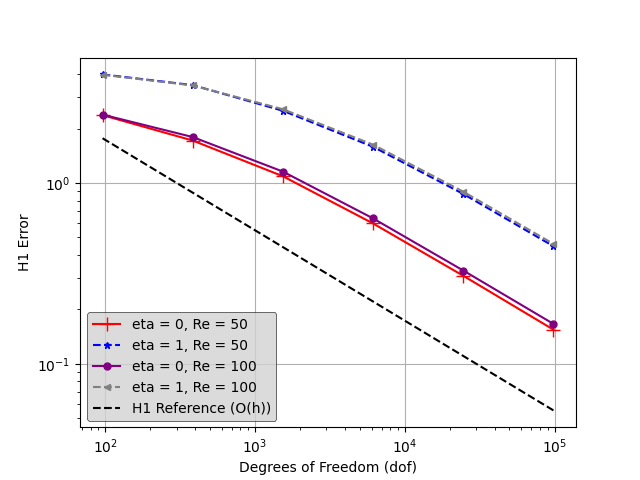}
		\includegraphics[width=0.490\textwidth]{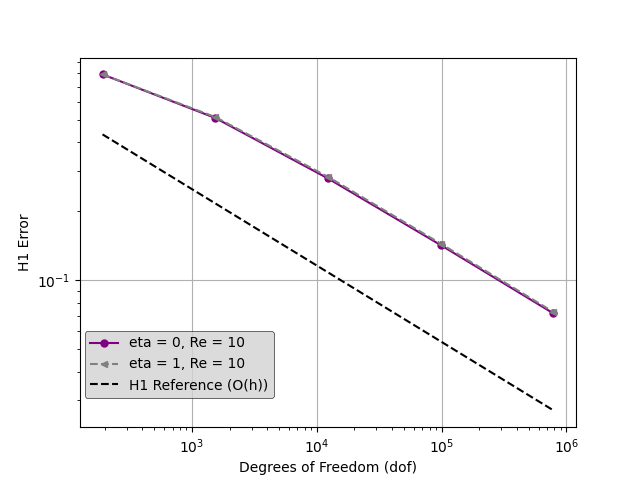}
		\caption{Error plot for GBHE without memory ($\eta = 0$) and with memory $(\eta =1)$ in 2D (left panel) respectively for Re = 50 and 100 and Re = 10 for 3D (right panel) .}\label{fig9}
	\end{center}
\end{figure}
	\begin{figure}
	\begin{center}
		\includegraphics[width=0.400\textwidth]{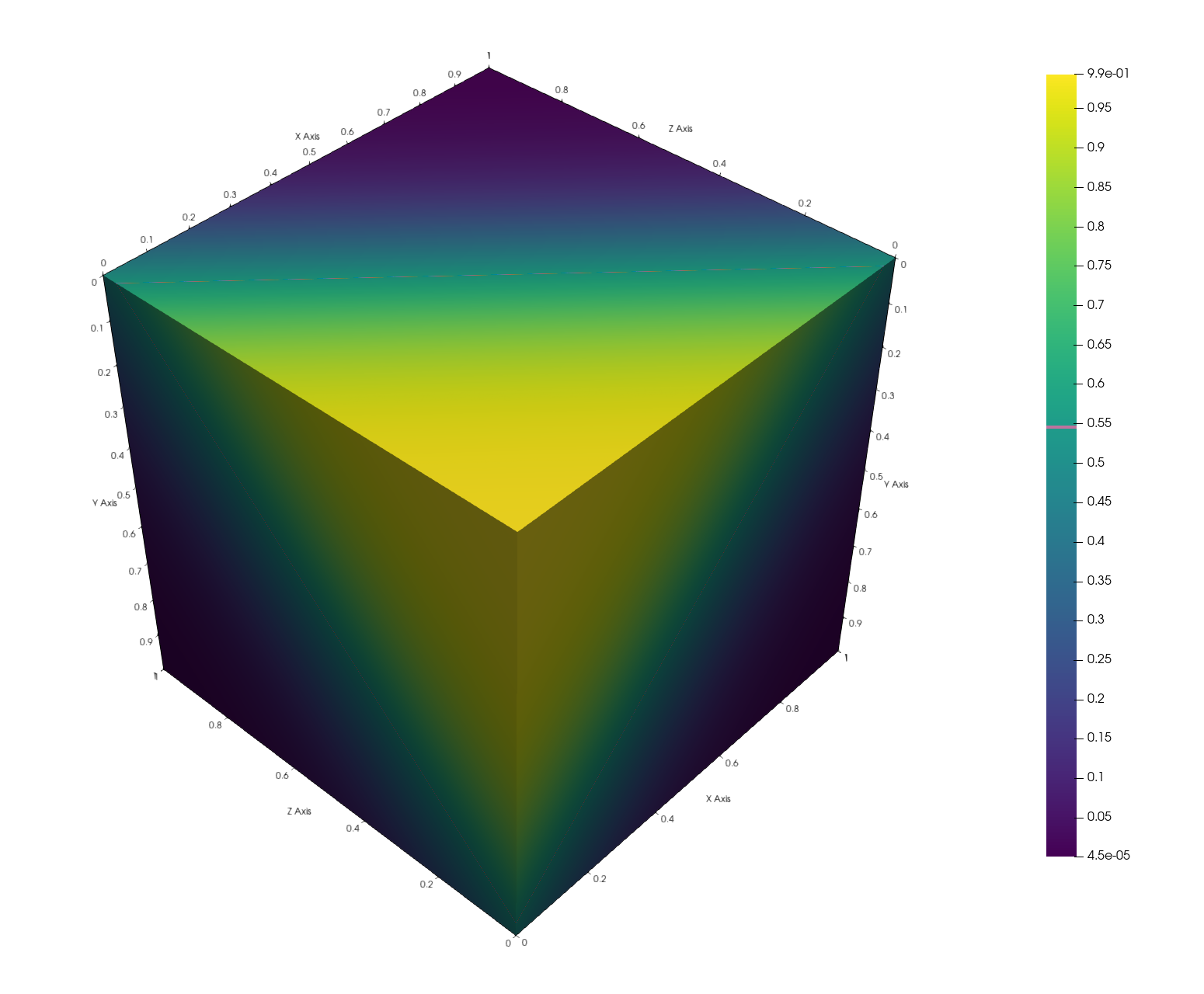}
		\includegraphics[width=0.400\textwidth]{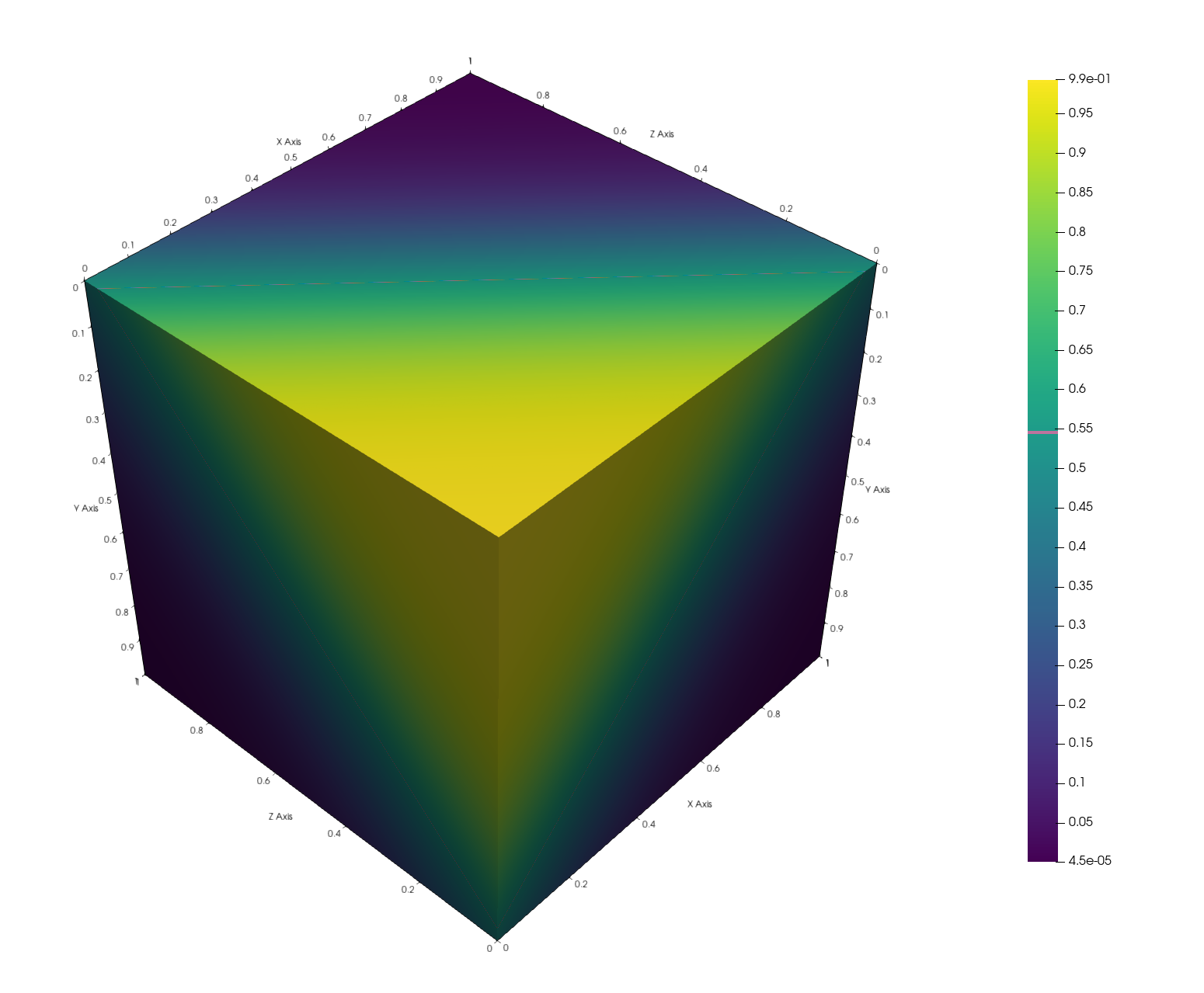}
		\caption{Approximated solution at $T=1,$ for GBHE 3D without memory $(\eta=0)$ and with memory $(\eta =1),$ for $Re=10$ respectively.}\label{fig10}
	\end{center}
\end{figure}
	\subsection{Spiral Wave Formulation}
	In the last example,  a nonlinear system of model having applications in the transmission of electrical impulses in a nerve axon is discussed. The FitzHugh–Nagumo model describes complex wave phenomena in oscillatory media and can be obtained from GBHE ($\alpha =0,$ $\delta=1$ and $\eta=0$) coupled with an ODE as given in \cite{KSP}. In the similar context, GBHE with memory reads:
	\begin{equation}\label{2.TBH}
		\mathcal{L} u(x,t)+ v(x,t) = 0, \qquad \partial_t v(x,t) = \varepsilon(u(x,t)-\rho v(x,t)),
	\end{equation}
where $	\mathcal{L} u(x,t)$ is as defined in \eqref{2.GBHE}, and the parameters $\epsilon$ and $\rho$ represents different scales of the physical variables. 

The weak form similar to \eqref{2.dgfdweakform} for the DGFEM can be obtained and the computed results are presented in Figure \ref{2.fig:ex3} on the domain $\Omega = (0,300)^2$ and other parameters chosen as in \cite{GBHE}. The figures illustrate the spiral behaviour of the solution for the FitzHugh–Nagumo model, GBHE without memory ($\eta = 0$) and GBHE with memory ($\eta = 0.01$). The results illustrate that the addition of the advection term or memory does not affect the spiral behaviour much. However, it is observed that if we increase the memory coefficient $\eta$ to $1$, the spiral behaviour is reversed and the spiral nature is affected if the non-linearity parameter $\delta$ is increased.   

	\begin{figure}[ht!]
		\begin{center}
			\includegraphics[width=0.325\textwidth]{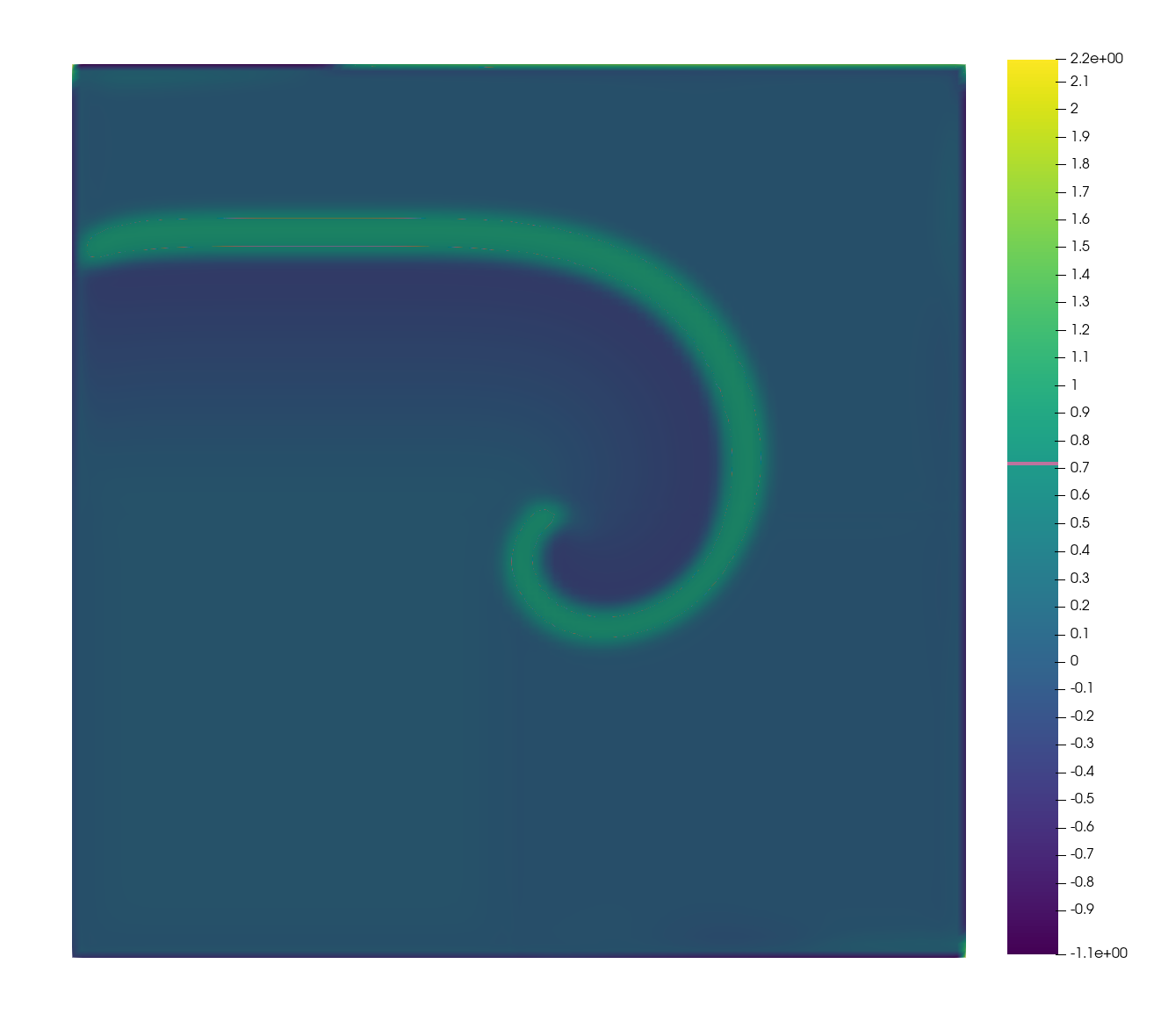}
			\includegraphics[width=0.325\textwidth]{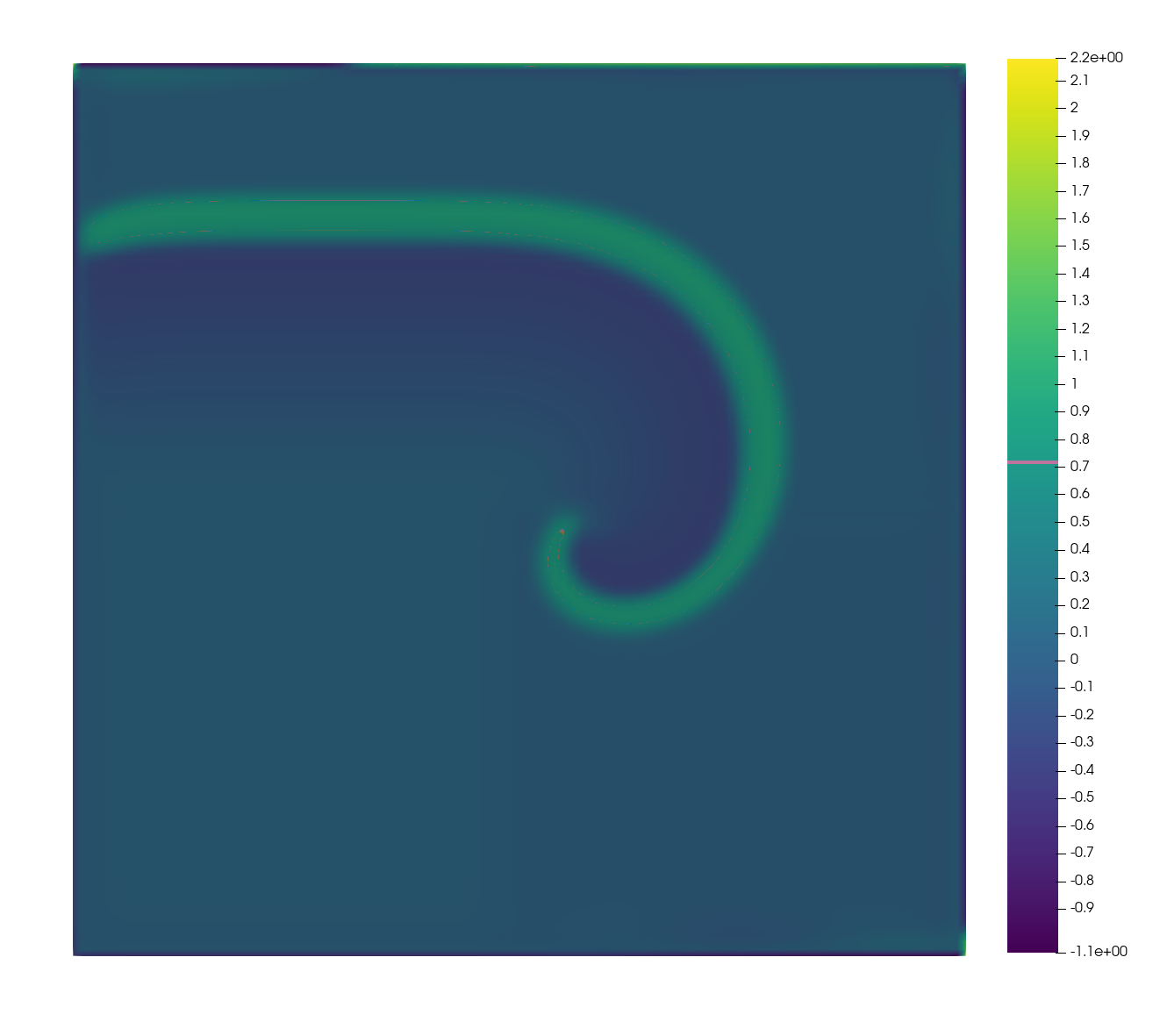}
			\includegraphics[width=0.325\textwidth]{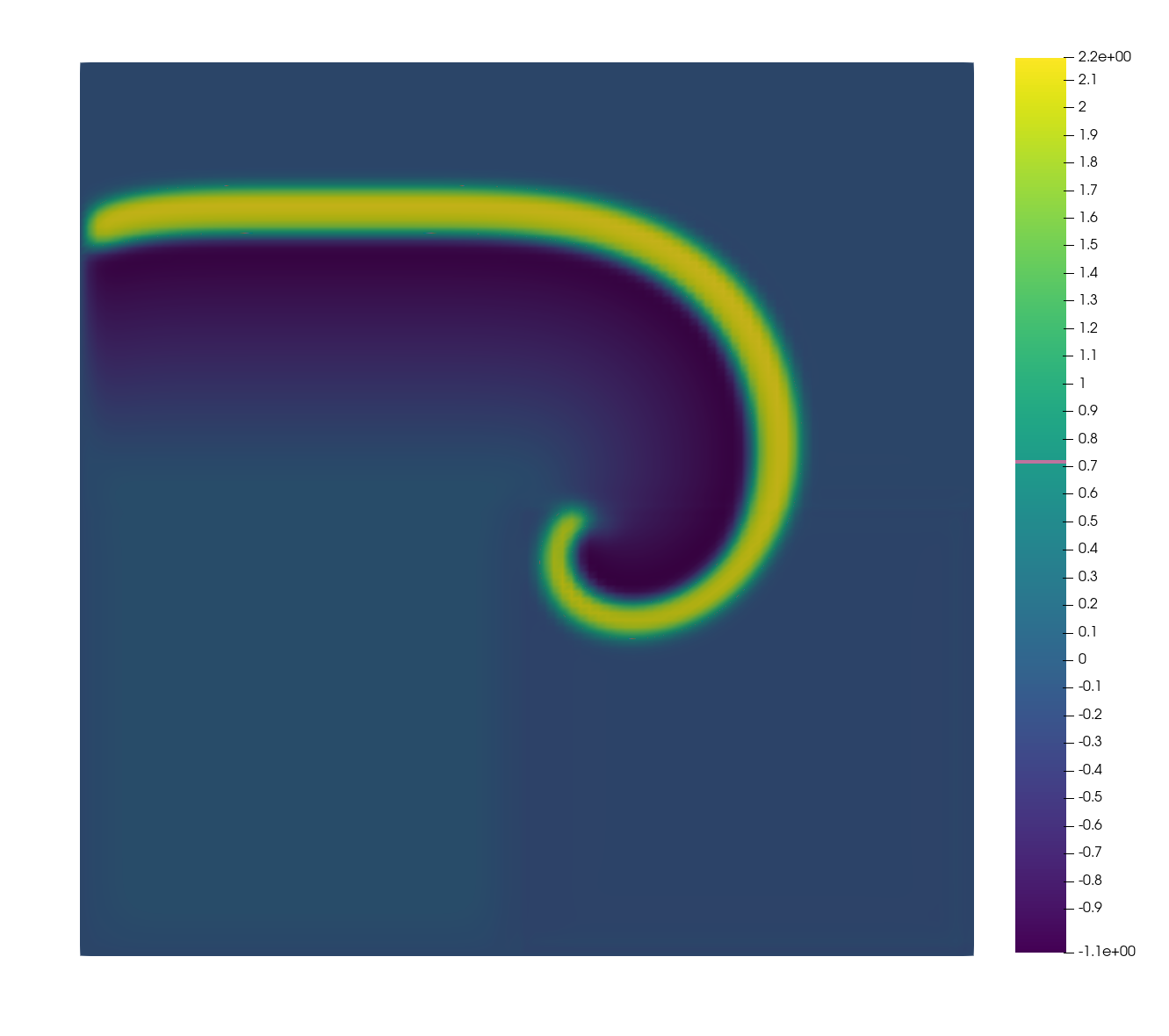}\\
		\end{center}
		\caption{ Snapshots at $t = 150$ of $u_h^{DG}$ for the FitzHugh-Nagumo model, GBHE without memory $(\eta = 0)$ and GBHE with memory $(\eta = 0.01),$ respectively with $\delta = 1$ and $\alpha = 0.1.$}\label{2.fig:ex3}
	\end{figure}
	\section*{Acknowledgement}
	S. Mahajan would like to thank Prof. Manil T. Mohan for useful discussions.
	
\bibliographystyle{siamplain}
\bibliography{IMANUM-refs}

\end{document}